\newcommand {\A}{\mathcal{A}}
\newcommand {\bmo}{\mathrm{bmo}}
\newcommand {\C}{\mathbb C}
\newcommand {\Ca}{\mathcal{C}}
\newcommand {\Da}{\mathcal{D}}
\newcommand {\ud}{\mathrm{d}}
\newcommand {\veps}{\varepsilon}
\newcommand {\F}{\mathcal{F}}
\newcommand {\HT}{\mathcal{H}}
\newcommand {\Hp}{\mathcal{H}^{p}_{FIO}(\Rn)}
\newcommand {\Hps}{\mathcal{H}^{s,p}_{FIO}(\Rn)}
\newcommand {\HpsM}{\mathcal{H}^{s,p}_{FIO}(M)}
\newcommand {\ind}{\mathbf{1}}
\newcommand {\inj}{\mathrm{inj}}
\newcommand {\ka}{\kappa}
\newcommand {\la}{\lambda}
\newcommand {\rb}{\rangle}
\newcommand {\lb}{\langle}
\newcommand {\La}{\mathcal{L}}
\newcommand {\loc}{\mathrm{loc}}
\newcommand {\Ma}{\mathcal{M}}
\newcommand {\N}{\mathbb N}
\newcommand {\ph}{\varphi}
\newcommand {\psit}{\tilde{\psi}}
\newcommand {\R}{\mathbb R}
\newcommand {\Rn}{\mathbb{R}^{n}}
\newcommand {\rank}{\mathrm{rank}}
\newcommand {\supp}{\mathrm{supp}}
\newcommand {\Sw}{\mathcal{S}}
\newcommand {\w}{\omega}
\newcommand {\Z}{\mathbb Z}
\newcommand {\vanish}[1]{\relax}
\newcommand{\wh}{\widehat}
\newcommand{\wt}{\widetilde}
\DeclareMathOperator{\Real}{Re}
\newtheorem{theorem}{Theorem}[section]
\newtheorem{lemma}[theorem]{Lemma}
\newtheorem{proposition}[theorem]{Proposition}
\newtheorem{corollary}[theorem]{Corollary}
\theoremstyle{definition}
\newtheorem{definition}[theorem]{Definition}
\newtheorem{remark}[theorem]{Remark}
\numberwithin{equation}{section}
\protected\def\ignorethis#1\endignorethis{}
\let\endignorethis\relax
\title[Spherical maximal functions and Hardy spaces for FIOs]
	{Spherical maximal functions and Hardy spaces for Fourier integral operators}
\author{Abhishek Ghosh}
\address{Department of Mathematics\\
Indian Institute of Technology Madras\\
Chennai\\
Tamil Nadu\\
600036, India}
\email{abhi@iitm.ac.in}
\author{Naijia Liu}
\address{Department of Mathematics,
    Sun Yat-sen University,
    Guangzhou, 510275,
    P.R.~China}
\email{liunj@mail2.sysu.edu.cn}
\author{Jan Rozendaal}
\address{Institute of Mathematics, Polish Academy of Sciences\\
\'{S}niadeckich 8\\
00-656 Warsaw\\
Poland}
\email[corresponding author]{jrozendaal@impan.pl}
\author{Liang Song}
\address{Department of Mathematics,
    Sun Yat-sen University,
    Guangzhou, 510275,
    P.R.~China}
\email{songl@mail.sysu.edu.cn}
\keywords{Spherical maximal function, Hardy space for Fourier integral operators, wave equation, pointwise convergence}
\subjclass[2020]{Primary 42B25. Secondary 42B35, 42B37, 58J40}
\thanks{This research was funded in part by the National Science Center, Poland, grant 2021/43/D/ST1/00667. N.~J. Liu is supported by China Postdoctoral Science Foundation (No. 2024M763732) and NNSF of China (No. 12501132). J. Rozendaal is partially supported by NCN grant UMO-2023/49/B/ST1/01961. L. Song is supported by  NNSF of China (No. 12471097).
}
\begin{document}

\begin{abstract}
We use the Hardy spaces for Fourier integral operators to obtain bounds for spherical maximal functions in $L^{p}(\mathbb{R}^{n})$, $n\geq2$, where the radii of the spheres are restricted to a compact interval in $(0,\infty)$. 
These bounds extend to general hypersurfaces with non-vanishing Gaussian curvature, to the complex spherical means, and to geodesic spheres on compact manifolds. We also obtain improved maximal function bounds and pointwise convergence statements for wave equations, both on $\mathbb{R}^{n}$ and on compact manifolds. The maximal function bounds are essentially sharp for all $p\in[1,2]\cup [\frac{2(n+1)}{n-1},\infty)$, for each such hypersurface, every complex spherical mean, and on every manifold.
\end{abstract}
	
\maketitle	

%\tableofcontents

\section{Introduction}\label{sec:intro}

For $n\geq 2$, $f\in C(\Rn)$ and $x\in\Rn$, set
\begin{equation}\label{eq:defMsphere}
\Ma_{[1,2]}f(x):=\sup_{1\leq t\leq 2}\Big|\int_{S^{n-1}}f(x-ty)\ud\sigma(y)\Big|,
\end{equation}
where $\ud\sigma$ is the induced Lebesgue measure on the unit sphere $S^{n-1}$ in $\Rn$. In this article we will obtain $L^{p}(\Rn)$ bounds for $\Ma_{[1,2]}f$, and for related maximal functions, if $f$ is an element of a suitable Hardy space for Fourier integral operators. 

\subsection{Main results}\label{subsec:mainresults} For $p\in[1,\infty]$, set
\begin{equation}\label{eq:sp}
s(p):=\frac{n-1}{2}\Big|\frac{1}{2}-\frac{1}{p}\Big|
\end{equation}
and
\begin{equation}\label{eq:dp}
d(p):=\begin{cases}
s(p)-\frac{1}{p}&\text{if }\frac{2(n+1)}{n-1}\leq p\leq \infty,\\
0&\text{if }2\leq p<\frac{2(n+1)}{n-1},\\
s(p)&\text{if }1\leq p\leq 2.
\end{cases}
\end{equation}
For $s\in\R$, let $\Hps=\lb D\rb^{-s}\Hp$ be a Sobolev space over the Hardy space $\Hp$ for Fourier integral operators from Definition \ref{def:HpFIORn}. Then our main result for $\Ma_{[1,2]}$ is as follows.

\begin{theorem}\label{thm:maximalhypintro}
Let $p\in[1,\infty)$ and $s>d(p)+\frac{1}{p}-\frac{n-1}{2}$. Then there exists a $C\geq0$ such that
\begin{equation}\label{eq:maximalhyp}
\|\Ma_{[1,2]}f\|_{L^{p}(\Rn)}\leq C\|f\|_{\Hps}
\end{equation}
for all $f\in C(\Rn)\cap \Hps$. Conversely, if $p\in[1,2]\cup [\frac{2(n+1)}{n-1},\infty)$ and \eqref{eq:maximalhyp} holds for all $f\in\Sw(\Rn)$, then $s\geq d(p)+\frac{1}{p}-\frac{n-1}{2}$. 
\end{theorem}

In fact, \eqref{eq:maximalhyp} extends from $S^{n-1}$ to arbitrary compact hypersurfaces with non-vanishing Gaussian curvature, cf.~Theorem \ref{thm:maximalhyp1}, and a variable-coefficient version also holds; see Theorem \ref{thm:maximalhyp2}. A consequence of the latter is an extension of \eqref{eq:maximalhyp} to geodesic spheres on compact manifolds, Proposition \ref{prop:geodesicmax}. These results are all essentially sharp for $p\in[1,2]\cup[\frac{2(n+1)}{n-1},\infty)$, as is shown in Theorems \ref{thm:sharphyp1} and \ref{thm:sharphyp2} and Proposition \ref{prop:sharpman1}. See also Remark \ref{rem:pinfty} for the case where $p=\infty$.

Another extension of Theorem \ref{thm:maximalhypintro} arises by considering maximal functions associated with the complex spherical means from \cite{Stein76}. In Proposition \ref{prop:complexsphere} we show that \eqref{eq:maximalhyp} holds for $s>d(p)+\frac{1}{p}-\frac{n-1}{2}-\Real(\alpha)$ if one replaces the average in \eqref{eq:defMsphere} by the dilated complex spherical mean of order $\alpha\in\C$, with Theorem \ref{thm:maximalhypintro} corresponding to the case where $\alpha=0$. These bounds are essentially sharp for all $\alpha\in\C$ and $p\in[1,2]\cup[\frac{2(n+1)}{n-1},\infty)$, as follows from Proposition \ref{prop:sharpcomplex}.

We also obtain the following maximal function bounds and pointwise convergence statements for the Euclidean wave equation.

\begin{theorem}\label{thm:maximalFIOintro}
Let $p\in[1,\infty)$ and $s>d(p)+\frac{1}{p}$. Then there exists a $C\geq0$ such that
\begin{equation}\label{eq:maximalFIOintro}
\big\|\sup_{0\leq t\leq 1}|e^{it\sqrt{-\Delta}}f|\big\|_{L^{p}(\Rn)}\leq C\|f\|_{\Hps}
\end{equation}
for all $f\in\Hps$. Hence $e^{it\sqrt{-\Delta}}f(x)\to f(x)$ as $t\to 0$, for almost all $x\in\Rn$. Conversely, if $p\in[1,2]\cup [\frac{2(n+1)}{n-1},\infty)$ and \eqref{eq:maximalFIOintro} holds for all $f\in\Sw(\Rn)$, then $s\geq d(p)+\frac{1}{p}$. 
\end{theorem}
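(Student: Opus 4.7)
The plan is to reduce the maximal estimate to a fixed-time $L^{p}$ bound for $e^{it\sqrt{-\Delta}}$ plus a quantitative $t$-continuity estimate, via a fractional Sobolev embedding in the time variable. For any $\sigma>1/p$ the one-dimensional embedding $W^{\sigma,p}([0,1])\hookrightarrow L^{\infty}([0,1])$ gives, pointwise in $x\in\Rn$,
\begin{equation*}
\sup_{0\leq t\leq 1}|e^{it\sqrt{-\Delta}}f(x)|^{p}\lesssim \int_{0}^{1}|e^{it\sqrt{-\Delta}}f(x)|^{p}\,\ud t+\int_{0}^{1}\!\!\int_{0}^{1}\frac{|e^{it\sqrt{-\Delta}}f(x)-e^{is\sqrt{-\Delta}}f(x)|^{p}}{|t-s|^{1+\sigma p}}\,\ud t\,\ud s.
\end{equation*}
Integrating in $x$ and applying Fubini reduces the proof of \eqref{eq:maximalFIOintro} to bounds on $\|e^{it\sqrt{-\Delta}}f\|_{L^{p}}$ at each fixed $t\in[0,1]$ and on the $L^{p}$ norm of the difference $e^{it\sqrt{-\Delta}}f-e^{is\sqrt{-\Delta}}f$ with a quantitative gain in $|t-s|$.

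For the fixed-time factor I would invoke the sharp Sobolev embedding $\mathcal{H}^{d(p),p}_{FIO}(\Rn)\hookrightarrow L^{p}(\Rn)$ together with the uniform boundedness of $e^{it\sqrt{-\Delta}}$ on $\Hp$ for $t$ in compact sets, yielding $\|e^{it\sqrt{-\Delta}}f\|_{L^{p}}\lesssim\|f\|_{\mathcal{H}^{d(p),p}_{FIO}(\Rn)}$. For the difference, writing $e^{it\sqrt{-\Delta}}f-e^{is\sqrt{-\Delta}}f=e^{is\sqrt{-\Delta}}(e^{i(t-s)\sqrt{-\Delta}}-I)f$ and interpolating the trivial uniform bound against the Duhamel bound $\|(e^{i\tau\sqrt{-\Delta}}-I)g\|_{\Hp}\lesssim|\tau|\|g\|_{\mathcal{H}^{1,p}_{FIO}(\Rn)}$ (obtained from $e^{i\tau\sqrt{-\Delta}}-I=\int_{0}^{\tau}i\sqrt{-\Delta}\,e^{ir\sqrt{-\Delta}}\,\ud r$) produces, for each $\alpha\in(0,1]$,
\begin{equation*}
\big\|(e^{i\tau\sqrt{-\Delta}}-I)g\big\|_{\mathcal{H}^{d(p),p}_{FIO}(\Rn)}\lesssim|\tau|^{\alpha}\|g\|_{\mathcal{H}^{d(p)+\alpha,p}_{FIO}(\Rn)}.
\end{equation*}
Substituting, the double integral is bounded by $\|f\|_{\mathcal{H}^{d(p)+\alpha,p}_{FIO}(\Rn)}^{p}\int_{0}^{1}\int_{0}^{1}|t-s|^{(\alpha-\sigma)p-1}\,\ud t\,\ud s$, which is finite iff $\alpha>\sigma$. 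Choosing $\sigma>1/p$ and $\alpha\in(\sigma,s-d(p))$, a nonempty interval precisely under the hypothesis $s>d(p)+1/p$, yields \eqref{eq:maximalFIOintro}.

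The main obstacle I anticipate is justifying the fractional $|\tau|^{\alpha}$ bound above; abstract complex interpolation on the Sobolev-Hardy scale $\Hps$ would suffice, but a concrete alternative uses a Littlewood-Paley decomposition of $f$, treating frequencies $|\xi|\lesssim 1/|\tau|$ by the Duhamel bound and frequencies $|\xi|\gtrsim 1/|\tau|$ by the uniform bound, with the frequency split producing the factor $|\tau|^{\alpha}$. Once the maximal estimate is established, the pointwise convergence statement follows by the standard density argument: $e^{it\sqrt{-\Delta}}g(x)\to g(x)$ uniformly in $x$ as $t\to 0$ for Schwartz $g$, and Schwartz functions are dense in $\Hps$ for $p\in[1,\infty)$; decomposing $f=g+h$ with $\|h\|_{\Hps}$ arbitrarily small and applying Chebyshev's inequality to the maximal function of $h$ and to $h$ itself (the latter via $\mathcal{H}^{d(p),p}_{FIO}(\Rn)\hookrightarrow L^{p}(\Rn)$) shows that the set $\{x:\limsup_{t\to 0}|e^{it\sqrt{-\Delta}}f(x)-f(x)|>\varepsilon\}$ has arbitrarily small Lebesgue measure for every $\varepsilon>0$, hence measure zero.
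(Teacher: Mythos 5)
There is a genuine gap, and it sits exactly where the theorem gets its strength. Your fixed-time input, the ``sharp Sobolev embedding $\HT^{d(p),p}_{FIO}(\Rn)\hookrightarrow L^{p}(\Rn)$'', is false for $p\in(2,\infty)$: the sharp embedding of the FIO--Hardy scale into $L^{p}$ costs $s(p)$ derivatives, not $d(p)$ (cf.~\eqref{eq:Sobolev}), and indeed $e^{it\sqrt{-\Delta}}:\HT^{s,p}_{FIO}(\Rn)\to L^{p}(\Rn)$ holds if and only if $s\geq s(p)$, as noted in Section \ref{subsec:discussion}. Since $d(p)<s(p)$ for every $p\in(2,\infty)$ (for instance $d(p)=0$ when $2<p<\frac{2(n+1)}{n-1}$), both places where you invoke this embedding break down: the fixed-time term, and the passage from the $\HT^{d(p),p}_{FIO}(\Rn)$-norm of the difference $e^{it\sqrt{-\Delta}}f-e^{is\sqrt{-\Delta}}f$ to its $L^{p}$-norm. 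If you replace $d(p)$ by the correct exponent $s(p)$ throughout, your scheme is sound (the $W^{\sigma,p}_{t}\hookrightarrow L^{\infty}_{t}$ reduction and the $\min(1,|\tau|2^{k})^{\alpha}$ Littlewood--Paley bound for $e^{i\tau\sqrt{-\Delta}}-I$ are fine, modulo a separate treatment of low frequencies where $|\xi|$ is not smooth), but it then proves \eqref{eq:maximalFIOintro} only for $s>s(p)+\frac{1}{p}$. That threshold coincides with $d(p)+\frac{1}{p}$ for $p\in[1,2]$, where $d(p)=s(p)$, and is exactly the weaker ``no-curvature'' bound recorded in Remark \ref{rem:nocinematic}; but it is strictly above the claimed threshold for all $p\in(2,\infty)$.

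The missing ingredient is the local smoothing estimate, Proposition \ref{prop:FIObounds1}\eqref{it:FIObounds12} (based on $\ell^{p}$ decoupling), which bounds $\big(\int_{I}\|e^{it\sqrt{-\Delta}}f\|_{L^{p}(\Rn)}^{p}\,\ud t\big)^{1/p}$ by $\|f\|_{\HT^{s,p}_{FIO}(\Rn)}$ for $s>d(p)$; this $t$-averaged bound, not any fixed-time embedding, is the source of the exponent $d(p)$, and your proposal never uses it. Your time-Sobolev skeleton can be repaired: after Fubini the first term is precisely the local smoothing quantity, and the difference term can be handled on each Littlewood--Paley block $|\xi|\eqsim 2^{k}$ by combining the $t$-averaged bounds for $e^{it\sqrt{-\Delta}}f_{k}$ and for $\partial_{t}e^{it\sqrt{-\Delta}}f_{k}$ (splitting into $|t-s|\leq 2^{-k}$ and $|t-s|>2^{-k}$), which recovers the cost $2^{k(d(p)+\frac{1}{p}+\veps)}$. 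This repaired version is essentially the paper's own argument: Theorem \ref{thm:maximalFIO1} uses the fundamental theorem of calculus and H\"older's inequality in place of the fractional embedding, together with \eqref{eq:mainproof0}--\eqref{eq:mainproof2}, and Theorem \ref{thm:maximalFIOintro} follows via Corollary \ref{cor:pointwiseFIO1}. Without some form of this local smoothing input, the threshold $d(p)+\frac{1}{p}$ is out of reach for $p>2$. Your closing density argument for pointwise convergence is fine once the maximal estimate is available, since $s>d(p)+\frac{1}{p}\geq s(p)$ guarantees $\Hps\subseteq L^{p}(\Rn)$.
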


Theorem \ref{thm:maximalFIOintro} is a special case of Corollary \ref{cor:pointwiseFIO1} and Theorems \ref{thm:sharpFIO1} and \ref{thm:sharpFIO2}, with the latter two results containing the sharpness statement. A version of Theorem \ref{thm:maximalFIOintro} also holds on compact Riemannian manifolds $(M,g)$; see Proposition \ref{prop:maximalFIO3}. In this case one replaces the Euclidean Laplacian $\Delta$ by the Laplace--Beltrami operator $\Delta_{g}$, and $\Hps$ by the space $\HpsM$ from Definition \ref{def:HpFIOM}. On every such manifold, the resulting maximal function bounds are essentially sharp for $p\in[1,2]\cup[\frac{2(n+1)}{n-1},\infty)$, as is shown in Proposition \ref{prop:sharpman2}.

\subsection{Previous work}\label{subsec:previous}

Bounds for $\Ma_{[1,2]}$ arise naturally in the study of the $L^{p}$ boundedness of the full spherical maximal function $\Ma_{(0,\infty)}$, given by
\begin{equation}\label{eq:deffull}
\Ma_{(0,\infty)} f(x):=\sup_{0<t<\infty}\Big|\int_{S^{n-1}}f(x-ty)\ud\sigma(y)\Big|
\end{equation}
for $f\in C(\Rn)$ and $x\in\Rn$. In \cite{Stein76},
Stein proved the inequality
\begin{equation}\label{eq:fullLp}
\|\Ma_{(0,\infty)} f\|_{L^{p}(\Rn)}\lesssim \|f\|_{L^{p}(\Rn)}
\end{equation}
for $n\geq 3$ and $p>\frac{n}{n-1}$. It was later shown by Bourgain in \cite{Bourgain86b} 
that \eqref{eq:fullLp} also holds for $n=2$ and $p>2$. Since  \eqref{eq:fullLp} does not hold if $n=1$ and $p<\infty$, nor if $n\geq2$ and $p\leq \frac{n}{n-1}$, this completely settled the question of the $L^{p}$ boundedness of $\Ma_{(0,\infty)}$. Indeed, apart from the fact that the condition $p>\frac{n}{n-1}$ is sharp, for such $p$ one cannot improve \eqref{eq:fullLp} by considering a larger space than $L^{p}(\Rn)$ on the right-hand side of \eqref{eq:fullLp}. This follows from the fact that the spherical averages converge pointwise almost everywhere to a multiple of $f$ as $t\to0$.

On the other hand, one can work with a larger space on the right-hand side of \eqref{eq:fullLp} if one replaces $\Ma_{(0,\infty)} f$ by $\Ma_{[1,2]}f$. For example, inequalities of the form
\begin{equation}\label{eq:localimprove}
\|\Ma_{[1,2]}f\|_{L^{p}(\Rn)}\lesssim \|f\|_{W^{s,p}(\Rn)},
\end{equation}
for $n\geq3$, $p>\frac{n}{n-1}$ and suitable $s<0$, follow from the work of Sogge and Stein in \cite{Sogge-Stein90}. In fact, they obtained a version of \eqref{eq:fullLp} for more general hypersurfaces in $\Rn$, by combining bounds as in \eqref{eq:localimprove} with a scaling argument, and their techniques lead to the condition $s>\frac{1}{p}-(n-1)\min(\frac{1}{p},\frac{1}{p'})$ in \eqref{eq:localimprove} (see \cite[pp.~182-183]{Sogge-Stein90}). A similar approach was used in the work \cite{MoSeSo92} of Mockenhoupt, Seeger and Sogge, who relied on improved bounds for $\|\Ma_{[1,2]}f\|_{L^{p}(\R^{2})}$ to give a simplified proof of \eqref{eq:fullLp} for $n=2$ and $p>2$ (see also \cite{Sogge91,MoSeSo93} for more general hypersurfaces in $\R^{2}$).  Schlag \cite{Schlag97} obtained sharp $L^{p}\mapsto L^{q}$ bounds for $\Ma_{[1,2]}$, with $q>p$. These are stronger than $L^{p}\to L^{p}$ bounds and are related to \eqref{eq:localimprove} due to Sobolev embeddings (see \cite{Schlag-Sogge97,Lee03,Lacey19} as well). We also refer to e.g.~\cite{Roos-Seeger23} for results on spherical maximal functions in the case where the radii are restricted to a fractal subset of $[1,2]$. Finally, estimates as in \eqref{eq:localimprove} are connected to the frequently studied question of the $L^{p}$ boundedness of maximal functions associated with the complex spherical means  
(see e.g.~\cite{MoSeSo92,LiShSoYa25,MiYaZh17}). However, it is crucial for the present article to note that,  
for general $p$, it does not appear to be known what the optimal condition on $s$ is for \eqref{eq:localimprove} to hold. 

Bounds for spherical maximal functions are intimately connected to maximal function estimates for wave propagators.
This connection arises due to the fact that the Fourier transform of the measure $\ud\sigma$ in \eqref{eq:defM} can be expressed in terms of the Euclidean half-wave propagators. However, maximal function estimates for solutions to partial differential equations, and the associated pointwise convergence statements, are of independent interest. Such statements are particularly topical for the Schr\"{o}dinger equation, due to the recent (almost) resolution of Carleson's problem on the pointwise convergence of solutions to their initial data in $L^{2}(\Rn)$ (see \cite{Hickman23,Bourgain16,DuGuLi17,Du-Zhang19}). In the case of the wave equation, bounds of the form
\begin{equation}\label{eq:maximalFIOintro3}
\big\|\sup_{0\leq t\leq 1}|e^{it\sqrt{-\Delta}}f|\big\|_{L^{p}(\Rn)}\lesssim\|f\|_{W^{s,p}(\Rn)}
\end{equation}
are known to hold for $p\geq \frac{2(n+1)}{n-1}$ and $s>2s(p)$, as a corollary of  
\cite{Bourgain-Demeter15} (see \cite{BeHiSo20,BeHiSo21} for the analogous statement on compact manifolds). For $n=2$ one can obtain \eqref{eq:maximalFIOintro3} for $p>\frac{2n}{n-1}$ and $s>2s(p)$ by relying on \cite{GuWaZh20}, and similarly for compact Riemannian surfaces, using \cite{GaLiMiXi23}. These bounds are all essentially sharp in terms of the exponent $s$.
For $p=2$ and $s>1/2$, \eqref{eq:maximalFIOintro3} is due to Cowling \cite{Cowling83b}, and here the condition $s>1/2$ is necessary \cite{Walther99}. In particular, given that $\HT^{s,2}_{FIO}(\Rn)=W^{s,2}(\Rn)$, this corresponds to the case $p=2$ in Theorem \ref{thm:maximalFIOintro}. We also refer to \cite{Rogers-Villarroya08,ChLeLi26} for endpoint bounds in the setting where $p$ equals $2$ on the right-hand side, but not on the left-hand side, of \eqref{eq:maximalFIOintro3}.  However, for general $p$ it does not appear to be known what the optimal condition on $s$ is for \eqref{eq:maximalFIOintro3} to hold.

\subsection{Discussion of the results}\label{subsec:discussion}

In this article we obtain versions of \eqref{eq:localimprove} and \eqref{eq:maximalFIOintro3} where $W^{s,p}(\Rn)$ is replaced by a Hardy space for Fourier integral operators. In doing so we improve, for certain values of $p$, 
the known results regarding 
\eqref{eq:localimprove} and \eqref{eq:maximalFIOintro3},  
even in cases where the essentially optimal Sobolev exponent $s$ (in the $W^{s,p}$ scale) has been established. In other cases we complement the existing results.

The Hardy space $\Hp$ for Fourier integral operators was introduced for $p=1$ by Smith in \cite{Smith98a}, and his definition was extended to all $1\leq p\leq \infty$  by Hassell, Portal and the third author in \cite{HaPoRo20}. These spaces are invariant under suitable Fourier integral operators, and they satisfy the Sobolev embeddings
\begin{equation}\label{eq:Sobolevintro}
W^{s(p),p}(\Rn)\subseteq \Hp\subseteq W^{-s(p),p}(\Rn)
\end{equation}
for $1<p<\infty$, with the natural modifications involving the local Hardy space $\HT^{1}(\Rn)$ for $p=1$, and $\bmo(\Rn)$ for $p=\infty$. 

In particular, for $1<p<\infty$, Theorem \ref{thm:maximalhypintro} yields \eqref{eq:localimprove} with $s>d(p)+s(p)+\frac{1}{p}-\frac{n-1}{2}$, and Theorem \ref{thm:maximalFIOintro} yields \eqref{eq:maximalFIOintro3} with $s>d(p)+s(p)+\frac{1}{p}$. One has $\HT^{2}_{FIO}(\Rn)=L^{2}(\Rn)$, so for $p=2$ all our results apply to $W^{s,2}(\Rn)$. However, for $p\neq 2$ the corollaries of Theorems \ref{thm:maximalhypintro} and \ref{thm:maximalFIOintro} for $W^{s,p}(\Rn)$ are strictly weaker than the estimates in Theorems \ref{thm:maximalhypintro} and \ref{thm:maximalFIOintro} themselves, since 
$W^{s(p),p}(\Rn)$ is a proper subspace of $\Hp$. In fact, the Sobolev exponents in \eqref{eq:Sobolevintro} are sharp, and therefore the bounds in Theorems \ref{thm:maximalhypintro} and \ref{thm:maximalFIOintro} are significantly stronger than their corollaries regarding \eqref{eq:localimprove} and \eqref{eq:maximalFIOintro3}, at least for certain initial data. This is particularly relevant for $p\geq \frac{2(n+1)}{n-1}$, where one has $d(p)+s(p)+\frac{1}{p}=2s(p)$ and the resulting condition on $s$ in \eqref{eq:localimprove} and \eqref{eq:maximalFIOintro3} is essentially sharp, due to the mapping properties of the individual spherical means and half-wave propagators. Theorems \ref{thm:maximalhypintro} and \ref{thm:maximalFIOintro} show that \eqref{eq:localimprove} and \eqref{eq:maximalFIOintro3} can nonetheless be improved, by working with a different space of initial data. We also note that, for suitable Fourier integral operators $T$, one can replace $f$ by $Tf$ on the left-hand sides of \eqref{eq:maximalhyp} and \eqref{eq:maximalFIOintro}, whereas this is not possible when working directly with \eqref{eq:localimprove} and \eqref{eq:maximalFIOintro3}.

For $n=2$ and $p>\frac{2n}{n-1}$, one can use \cite{GuWaZh20} to obtain \eqref{eq:localimprove} and \eqref{eq:maximalFIOintro3} for $s>2s(p)-\frac{n-1}{2}$ and $s>2s(p)$, respectively. 
Due to the sharpness of the Sobolev exponents in \eqref{eq:Sobolevintro}, for $\frac{2n}{n-1}<p<\frac{2(n+1)}{n-1}$ these bounds do not follow from Theorems \ref{thm:maximalhypintro} and \ref{thm:maximalFIOintro}, 
nor does the converse implication hold. 
%Also, the endpoint version of \eqref{eq:maximalFIOintro3} in \cite{HeNaSe11} 
%neither follows from Theorem \ref{thm:maximalFIOintro}, nor the other way around. 
A similar comparison holds for the analogues of Theorems \ref{thm:maximalhypintro} and \ref{thm:maximalFIOintro} on compact manifolds, Propositions \ref{prop:geodesicmax} and \ref{prop:maximalFIO3}, and the results in \cite{GaLiMiXi23}. 

While the sharp $L^{p}\to L^{q}$ improving properties from \cite{Schlag97,Schlag-Sogge97,Lee03} only hold for $\frac{n}{n-1}<p\leq q$, Theorems \ref{thm:maximalhypintro} and \ref{thm:maximalFIOintro} yield sharp bounds for all $p\in[1,2]\cup [\frac{2(n+1)}{n-1},\infty)$. %We are not aware of other work where sharpness is proved in the generality considered here. %, namely for maximal functions associated with (complex) spherical averages, variable-coefficient surfaces on $\Rn$, geodesic circles and spheres on compact manifolds, and for wave equations on $\Rn$ and on compact manifolds. 
In fact, some of our estimates involving $\Hp$ are sharp even when the analogous bounds for $L^{p}(\Rn)$ can be improved (see e.g.~Remark \ref{rem:SSS}).

\subsection{Ideas behind the proofs}\label{subsec:proof}

Just as in \cite{Sogge-Stein90,MoSeSo92,Schlag-Sogge97} and in more recent work on the $L^{p}$ boundedness of complex spherical maximal functions and related operators (see e.g.~\cite{MiYaZh17,LiShSoYa25,BeRaSa19}), our spherical maximal function bounds follow from estimates for wave propagators.  
More precisely, to derive Theorem \ref{thm:maximalhypintro}, we make use of both fixed-time and local smoothing estimates for Fourier integral operators, as can be found in \cite{Smith98a,HaPoRo20,Rozendaal22b,LiRoSoYa24}. This approach, which relies on the same ideas as in \cite{Sogge-Stein90} (see also \cite{Sogge17,BeHiSo21}), leads to a version of Theorem \ref{thm:maximalFIOintro} for general Fourier integral operators, which in turn implies both Theorems \ref{thm:maximalhypintro} and \ref{thm:maximalFIOintro} as well as their various extensions. 

Although our approach to obtaining maximal function bounds is analogous to that in the literature, one of the main goals of this article is to prove estimates which are sharp in a variety of geometric settings, and a significant part of the article is dedicated to showing that our results are indeed essentially sharp. As in the case of the results on local smoothing in \cite{Rozendaal22b,LiRoSoYa24}, working with the Hardy spaces for Fourier integral operators allows one to obtain sharp estimates even where the corresponding sharp bounds for $L^{p}(\Rn)$ are not yet known, at the price of the additional work required to deal with the more complicated $\Hp$ norm.

In the case of Theorem \ref{thm:maximalFIOintro}, sharpness follows for $p\geq \frac{2(n+1)}{n-1}$ by combining  \eqref{eq:Sobolevintro} with the $L^{p}$ mapping properties of the individual operators, but in Theorem \ref{thm:sharpFIO1} we prove a more general lower bound for $s$ in \eqref{eq:maximalFIOintro} which does not hold in the $L^{p}$ scale (see Remark \ref{rem:SSS}).  To prove sharpness for $1\leq p\leq 2$ one has to produce additional focusing of order $1/p$, which we do in Theorem \ref{thm:sharpFIO2} using a Knapp example that cannot be used to prove sharpness of the corresponding estimate in the $L^{p}$ scale (see Remark \ref{rem:sharpLp}). In fact, Theorems \ref{thm:sharpFIO1} and \ref{thm:sharpFIO2} both concern more general Fourier integral operators, and the proofs rely on an improvement of a lemma from \cite{Rozendaal22b,LiRoSoYa24} which relates such operators to associated flow maps, Lemma \ref{lem:flow}. We do not know whether our results are sharp for $2<p<\frac{2(n+1)}{n-1}$ (see also Remark \ref{rem:sharpintermediate}).

Although the spherical averages can be expressed as a linear combination of terms involving the half-wave propagators, the contributions of these terms might, a priori, cancel each other out. To nonetheless derive sharpness of Theorem \ref{thm:maximalhypintro} from sharpness of maximal function bounds for half-wave propagators, in the proof of Theorem \ref{thm:sharphyp1} we microlocalize to regions of phase space where there is only a single such term. Moreover, due to the fact that our maximal function bounds for Fourier integral operators are sharp under very mild conditions, we can use FIO calculus to show that various extensions of Theorem \ref{thm:maximalhypintro} are also essentially sharp. 

Finally, given that the assumptions for the sharpness statements in Theorems \ref{thm:sharpFIO1} and \ref{thm:sharpFIO2} are of a local nature, it suffices to work in local coordinates when deriving sharpness of our results on manifolds. There one can use that our other results are sharp under very mild conditions. 

\subsection{Organization}

In Section \ref{sec:HpFIO} we define the Hardy spaces for Fourier integral operators on $\Rn$ and on compact manifolds, and we collect some of their properties, together with the bounds for Fourier integral operators which will be used to prove Theorems \ref{thm:maximalhypintro} and \ref{thm:maximalFIOintro}. Section \ref{sec:maximal} contains the statements and proofs of our results on maximal functions, with Section \ref{subsec:FIOmaximal} being dedicated to maximal function estimates for Fourier integral operators, Section \ref{subsec:hypersurf} to hypersurfaces in $\Rn$, Section \ref{subsec:complex} to the complex spherical averages, and Section \ref{subsec:manifold} to maximal functions on manifolds. Finally, in Section \ref{sec:sharpness} we show that the results in Section \ref{sec:maximal} are essentially sharp for suitable values of $p\in[1,\infty]$.

\subsection{Notation and terminology}

The natural numbers are $\N=\{1,2,\ldots\}$, and $\Z_{+}:=\N\cup\{0\}$. Throughout, we fix an $n\in\N$ with $n\geq 2$. 

For $\xi\in\Rn$ we write $\lb\xi\rb=(1+|\xi|^{2})^{1/2}$, and $\hat{\xi}=\xi/|\xi|$ if $\xi\neq0$. We use multi-index notation, where $\partial_{\xi}=(\partial_{\xi_{1}},\ldots,\partial_{\xi_{n}})$ and $\partial^{\alpha}_{\xi}=\partial^{\alpha_{1}}_{\xi_{1}}\ldots\partial^{\alpha_{n}}_{\xi_{n}}$
for $\xi=(\xi_{1},\ldots,\xi_{n})\in\Rn$ and $\alpha=(\alpha_{1},\ldots,\alpha_{n})\in\Z_{+}^{n}$. Moreover, $\partial_{x\eta}^{2}\Phi$ denotes the mixed Hessian of a function $\Phi$ of the variables $x$ and $\eta$.

The Fourier transform of a tempered distribution $f\in\Sw'(\Rn)$ is denoted by $\F f$ or $\widehat{f}$, and the Fourier multiplier with symbol $\ph$ is $\ph(D)$. The space of bounded linear operators 
between Banach spaces $X$ and $Y$ is $\La(X,Y)$, and $\La(X):=\La(X,X)$. 

We write $f(s)\lesssim g(s)$ to indicate that $f(s)\leq Cg(s)$ for all $s$ and a constant $C>0$ independent of $s$, and similarly for $f(s)\gtrsim g(s)$ and $g(s)\eqsim f(s)$.

\section{Hardy spaces for Fourier integral operators}\label{sec:HpFIO}

In this section we first define the Hardy spaces for Fourier integral operators on $\Rn$. We then collect some of their basic properties, together with bounds for suitable Fourier integral operators. In the final subsection we introduce the Hardy spaces for Fourier integral operators on compact manifolds.

\subsection{Definitions on $\Rn$}\label{subsec:definitionsRn}

For simplicity of notation, we write $\HT^{p}(\Rn):=L^{p}(\Rn)$ for $p\in(1,\infty)$. Moreover, $\HT^{1}(\Rn)$ is the local real Hardy space, with norm
\[
\|f\|_{\HT^{1}(\Rn)}:=\|q(D)f\|_{L^{1}(\Rn)}+\|(1-q)(D)f\|_{H^{1}(\Rn)}
\]
for $f\in\HT^{1}(\Rn)$. Here and throughout, $q\in C^{\infty}_{c}(\Rn)$ is such that $q(\xi)=1$ for $|\xi|\leq 2$, and $H^{1}(\Rn)$ is the classical real Hardy space. Finally, $\HT^{\infty}(\Rn):=\bmo(\Rn)$ is the dual of $\HT^{1}(\Rn)$, and we write $\HT^{s,p}(\Rn):=\lb D\rb^{-s}\HT^{p}(\Rn)$ for $p\in[1,\infty]$ and $s\in\R$.

Fix a collection $(\ph_{\w})_{\w\in S^{n-1}}\subseteq C^{\infty}(\Rn)$ with the following properties:
\begin{enumerate}
\item\label{it:phiproperties1} For all $\w\in S^{n-1}$ and $\xi\neq0$, one has $\ph_{\w}(\xi)=0$ if $|\xi|<\frac{1}{4}$ or $|\hat{\xi}-\w|>|\xi|^{-1/2}$.
\item\label{it:phiproperties2} For all $\alpha\in\Z_{+}^{n}$ and $\beta\in\Z_{+}$, there exists a $C_{\alpha,\beta}\geq0$ such that
\[
|(\w\cdot \partial_{\xi})^{\beta}\partial^{\alpha}_{\xi}\ph_{\w}(\xi)|\leq C_{\alpha,\beta}|\xi|^{\frac{n-1}{4}-\frac{|\alpha|}{2}-\beta}
\]
for all $\w\in S^{n-1}$ and $\xi\neq0$.
\item\label{it:phiproperties3}
The map $(\xi,\w)\mapsto \ph_{\w}(\xi)$ is measurable on $\Rn\times S^{n-1}$, and 
\[
%\begin{equation}\label{eq:reproduce1}
\int_{S^{n-1}}\ph_{\w}(\xi)^{2}\ud\w=1
%\end{equation}
\]
for all $\xi\in\Rn$ with $|\xi|\geq 1$.
\end{enumerate}
For the construction of such a family, see \cite{Rozendaal21,HaPoRoYu23}. 

We can now define the Hardy spaces for Fourier integral operators.

\begin{definition}\label{def:HpFIORn}
For $p\in[1,\infty)$, the space $\Hp$ consists of all $f\in\Sw'(\Rn)$ such that $q(D)f\in L^{p}(\Rn)$, $\ph_{\w}(D)f\in \HT^{p}(\Rn)$ for almost all $\w\in S^{n-1}$, and $(\int_{S^{n-1}}\|\ph_{\w}(D)f\|_{\HT^{p}(\Rn)}^{p}\ud\w)^{1/p}<\infty$, endowed with the norm
\[
%\begin{equation}\label{eq:HpFIOnorm}
\|f\|_{\HT^{p}_{FIO}(\Rn)}:=\|q(D)f\|_{L^{p}(\Rn)}+\Big(\int_{S^{n-1}}\|\ph_{\w}(D)f\|_{\HT^{p}(\Rn)}^{p}\ud\w\Big)^{1/p}.
%\end{equation}
\]
Moreover, $\HT^{\infty}_{FIO}(\Rn):=(\HT^{1}_{FIO}(\Rn))^{*}$. For $p\in[1,\infty]$ and $s\in\R$, $\HT^{s,p}_{FIO}(\Rn)$ consists of all $f\in\Sw'(\Rn)$ such that $\lb D\rb^{s}f\in\HT^{p}_{FIO}(\Rn)$, endowed with the norm
\[
\|f\|_{\HT^{s,p}_{FIO}(\Rn)}:=\|\lb D\rb^{s}f\|_{\Hp}.
\]
\end{definition}

Originally, in \cite{Smith98a,HaPoRo20}, the Hardy spaces for FIOs were defined differently. It follows from \cite{Rozendaal21,FaLiRoSo23} that the original definition is equivalent to the one given here.

For $m\in\R$, we will work with the Kohn--Nirenberg symbol class $S^{m}(\R^{n+1}\times \R^{n})$, consisting of those $a\in C^{\infty}(\R^{n+1}\times\R^{n})$ such that
\begin{equation}\label{eq:Kohn-Nirenberg}
\sup\{\lb\eta\rb^{|\beta|-m}|\partial_{z}^{\alpha}\partial_{\eta}^{\beta}a(z,\eta)|\mid z\in\R^{n+1},\eta\in \R^{n}\}<\infty
\end{equation}
for all $\alpha\in\Z_{+}^{n+1}$ and $\beta\in \Z_{+}^{n}$. We write $S^{m}(\R^{n})$ for the subspace of symbols which do not depend on the $z$ variable.

\subsection{Results on $\Rn$}\label{subsec:results}

In this subsection we first collect a few basic properties of the Hardy spaces for Fourier integral operators, and then we state two results about the boundedness of suitable Fourier integral operators.

Firstly, the following extension of the Sobolev embeddings from \eqref{eq:Sobolevintro} holds:
\begin{equation}\label{eq:Sobolev}
\HT^{s+s(p),p}(\Rn)\subseteq \HT^{s,p}_{FIO}(\Rn)\subseteq \HT^{s-s(p),p}(\Rn),
\end{equation}
for all $p\in[1,\infty]$ and $s\in\R$, as follows from \cite[Theorem 7.4]{HaPoRo20}. Here $s(p)$ is as in \eqref{eq:sp}. If $p<\infty$, then the Schwartz functions lie dense in $\Hps$ (see \cite[Proposition 6.6]{HaPoRo20}). Although we will not make explicit use of the following properties in the present article, we also note that the Hardy spaces for FIOs form a complex interpolation scale, cf.~\cite[Proposition 6.7]{HaPoRo20}, and that $(\Hps)^{*}=\HT^{-s,p'}_{FIO}(\Rn)$ for all $p\in[1,\infty)$ and $s\in\R$, cf.~\cite[Proposition 6.8]{HaPoRo20}. By \cite{Smith98a,HaPoRo20,LiRoSoYa24}, $\HT^{s,1}_{FIO}(\Rn)$ has suitable molecular and atomic decompositions.

The following proposition is a slight generalization of \cite[Corollary 3.6 and Theorem 4.4]{Rozendaal22b}, to the case where the phase function is not defined globally and the symbol is time dependent. Recall that a subset $V\subseteq \Rn\setminus\{0\}$ is \emph{conic} if $\lambda \eta\in V$ for all $\eta\in V$ and $\lambda>0$, and recall the definition of the exponent $d(p)$ from \eqref{eq:dp}.

\begin{proposition}\label{prop:FIObounds1}
Let $V\subseteq \Rn\setminus\{0\}$ be an open conic subset, and let $\phi\in C^{\infty}(V)$ be real-valued and positively homogeneous of degree $1$. Let $m\in\R$ and $a\in S^{m}(\Rn)$ be such that $\supp(a)\subseteq V'\cup\{0\}$, for some conic $V'\subseteq V$ which is closed in $\Rn\setminus\{0\}$. Then, for all $p\in[1,\infty]$ and $s\in\R$, and for each compact interval $I\subseteq (0,\infty)$, the following statements hold.
\begin{enumerate}
\item\label{it:FIObounds11}  
One has $e^{it\phi(D)}a(tD):\HT^{s+m,p}_{FIO}(\Rn)\to \Hps$ for all $t>0$, and
\begin{equation}\label{eq:FIObounds1}
\sup_{t\in I}\|e^{it\phi(D)}a(tD)\|_{\La(\HT^{s+m,p}_{FIO}(\Rn),\Hps)}<\infty.
\end{equation}
\item\label{it:FIObounds12} Suppose, additionally, that $\rank\,\partial_{\eta\eta}^{2}\phi(\eta)=n-1$ for all $\eta\in V$. If $p\in(2,\infty)$ and $s>d(p)+m$, 
then there exists a $C\geq0$ such that
\begin{equation}\label{eq:FIObounds2}
\Big(\int_{I}\|e^{it\phi(D)}a(tD)f\|_{L^{p}(\Rn)}^{p}\ud t\Big)^{1/p}\leq C\|f\|_{\HT^{s,p}_{FIO}(\Rn)}
\end{equation}
for all $f\in \HT^{s,p}_{FIO}(\Rn)$.
\end{enumerate}
\end{proposition}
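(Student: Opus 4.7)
The plan is to reduce both parts to \cite[Corollary 3.6 and Theorem 4.4]{Rozendaal22b} by globalizing the phase function and absorbing the $t$-scaling into the symbol with uniform $S^m$ control on $I$.

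First, I would extend $\phi$. Since $V'$ is conic and closed in $\Rn\setminus\{0\}$ and $V$ is an open conic neighborhood of $V'$, one can choose $\chi\in C^{\infty}(\Rn\setminus\{0\})$, positively homogeneous of degree $0$, with $\chi\equiv 1$ on an open conic neighborhood $W$ of $V'$ satisfying $\overline{W}\setminus\{0\}\subseteq V$, and with $\supp(\chi)\setminus\{0\}\subseteq V$. Define $\tilde\phi\in C^{\infty}(\Rn\setminus\{0\})$ to be a smooth, real-valued, positively homogeneous extension of $\chi\phi$, for instance $\tilde\phi(\eta):=\chi(\eta)\phi(\eta)+(1-\chi(\eta))|\eta|$. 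Because $\supp(a)\setminus\{0\}\subseteq V'\subseteq\{\chi=1\}$, the multiplier identity $e^{it\phi(D)}a(tD)=e^{it\tilde\phi(D)}a(tD)$ holds for every $t>0$. For statement (ii) the rank condition on $\tilde\phi$ is required only on a conic neighborhood of $\supp(a)$, which may be taken inside $W$; there $\tilde\phi=\phi$, so $\rank\,\partial^{2}_{\eta\eta}\tilde\phi=n-1$ is inherited from the hypothesis on $\phi$.

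Second, I would absorb the $t$-scaling. Set $a_{t}(\eta):=a(t\eta)$. Since $I$ is a compact subset of $(0,\infty)$, one has $\lb t\eta\rb\eqsim\lb\eta\rb$ uniformly in $t\in I$ and $\eta\in\Rn$, so the chain rule gives
\[
|\partial_{\eta}^{\beta}a_{t}(\eta)|\lesssim t^{|\beta|}\lb t\eta\rb^{m-|\beta|}\lesssim \lb\eta\rb^{m-|\beta|}\qquad (\beta\in\Z_{+}^{n}),
\]
with constants independent of $t\in I$. Thus $\{a_{t}\}_{t\in I}$ is a bounded family in $S^{m}(\Rn)$, and since $V'$ is conic, $\supp(a_{t})\subseteq V'\cup\{0\}$ for every $t$. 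Now \cite[Corollary 3.6]{Rozendaal22b} applied to $e^{it\tilde\phi(D)}a_{t}(D)$ yields a bound on $\|e^{it\tilde\phi(D)}a_{t}(D)\|_{\La(\HT^{s+m,p}_{FIO}(\Rn),\Hps)}$ that depends only on finitely many of the uniform $S^{m}$ seminorms of $a_{t}$ and on $\tilde\phi$, giving (i). For (ii), the same reduction applied to \cite[Theorem 4.4]{Rozendaal22b} yields an estimate of the form $\|e^{it\tilde\phi(D)}a_{t}(D)f\|_{L^{p}(\Rn)}^{p}\leq C\|f\|_{\HT^{s,p}_{FIO}(\Rn)}^{p}$ uniformly in $t\in I$, and integrating in $t\in I$ produces \eqref{eq:FIObounds2}.

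The main obstacle I anticipate is verifying that the constants in the cited theorems can be made uniform in $t\in I$, i.e.\ that their proofs depend on the symbol only through finitely many $S^{m}$ seminorms and on the phase only through finitely many homogeneous derivative bounds (together with the rank condition). Given the scale invariance of the Hardy-for-FIOs machinery and the compactness of $I$, this is routine but is the one point that should be checked carefully when transcribing the argument.
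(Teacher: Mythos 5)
Your treatment of part (i) matches the paper's proof: a homogeneous cutoff $\chi$ to replace $\phi$ by a globally defined homogeneous phase agreeing with it on a conic neighborhood of $\supp(a)$, the observation that $\{a(t\cdot)\mid t\in I\}$ is uniformly bounded in $S^{m}(\Rn)$ with supports in $V'\cup\{0\}$, and then the fixed-time result from \cite{Rozendaal22b} (the paper invokes Lemma 3.5 there, applied to $e^{it\phi'(D)}a(tD)\lb D\rb^{-m}$, rather than Corollary 3.6, but this is immaterial). That part is fine.

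Part (ii) contains a genuine gap. You claim that the reduction yields a \emph{fixed-time} estimate $\|e^{it\tilde\phi(D)}a_{t}(D)f\|_{L^{p}(\Rn)}\leq C\|f\|_{\HT^{s,p}_{FIO}(\Rn)}$, uniformly in $t\in I$, for $s>d(p)+m$, and that \eqref{eq:FIObounds2} then follows by integrating in $t$. Such a fixed-time estimate is false for $p\in(2,\infty)$: fixed-time $L^{p}$ bounds for elliptic FIOs of this type on $\Hps$ require a loss of $s(p)$ derivatives (this is exactly the sharpness phenomenon recorded in Theorem \ref{thm:sharpFIO1} and in the statement that $e^{it\sqrt{-\Delta}}:\HT^{s,p}_{FIO}(\Rn)\to L^{p}(\Rn)$ if and only if $s\geq s(p)$), whereas $d(p)<s(p)$ throughout $2<p<\infty$. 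The whole content of \eqref{eq:FIObounds2} is the local smoothing gain from $s(p)$ down to $d(p)$, which is produced by the averaging in $t$ itself and cannot be recovered by freezing $t$ and integrating afterwards. Correspondingly, \cite[Theorem 4.4]{Rozendaal22b} is a space-time ($L^{p}$ in $t$ and $x$) estimate, not a fixed-time one, and its proof relies on the Bourgain--Demeter $\ell^{p}$ decoupling inequality for cones/hypersurfaces with non-vanishing Gaussian curvature; moreover, since the symbol here is the time-dependent $a(tD)$, one cannot even cite that theorem pointwise in $t$. The paper's proof of (ii) instead reruns the argument of \cite[Theorem 4.4]{Rozendaal22b} with the time-dependent symbol (using the uniform $S^{m}$ bounds you correctly established and the equivalence of the rank condition with non-vanishing Gaussian curvature of $\{\eta\in V\mid\phi(\eta)=1\}$), rather than deducing it from fixed-time bounds.
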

Note that $e^{it\phi(D)}a(D)$ is well-defined if we extend $\phi$ to all of $\Rn$. The operator then extends by adjoint action to $\Sw'(\Rn)$. A similar remark applies to \eqref{eq:Tstandard} below.
\begin{proof}
Let $\chi\in C^{\infty}(\Rn\setminus\{0\})$ be real-valued and positively homogeneous of degree zero, and such that $\supp(\chi)\subseteq V$ and $\chi\equiv1$ on $V'$. Then $\phi':=\phi\chi\in C^{\infty}(\Rn\setminus\{0\})$ is real-valued and  positively homogeneous of degree $1$. Moreover, $\supp(a(t\cdot))\subseteq V'\cup\{0\}$ for all $t>0$, since $V'$ is conic, and $\{a(t\cdot)\mid t\in I\}$ is a uniformly bounded collection in $S^{m}(\Rn)$. Hence \eqref{it:FIObounds11} follows by applying \cite[Lemma 3.5]{Rozendaal22b} to $e^{it\phi'(D)}a(tD)\lb D\rb^{-m}=e^{it\phi(D)}a(tD)\lb D\rb^{-m}$, for $t\in I$. 

On the other hand, the proof of \eqref{it:FIObounds12} is analogous to that of \cite[Theorem 4.4]{Rozendaal22b}, relying in particular on the $\ell^{p}$ decoupling inequality from \cite{Bourgain-Demeter17}, for hypersurfaces with non-vanishing Gaussian curvature. Note that the condition that $\rank\,\partial_{\eta\eta}^{2}\phi(\eta)=n-1$ for all $\eta\in V$ is equivalent to $\{\eta\in V\mid \phi(\eta)=1\}$ having non-vanishing Gaussian curvature. 
\end{proof}

The following proposition is a variable-coefficient version of Proposition \ref{prop:FIObounds1}, involving operators with symbols that have compact spatial support. A subset $V\subseteq \R^{n+1}\times(\Rn\setminus\{0\})$ is \emph{conic} if $(z,\lambda \eta)\in V$ whenever $(z,\eta)\in V$ and $\lambda>0$. 

\begin{proposition}\label{prop:FIObounds2}
Let $V\subseteq \R^{n+1}\times(\R^{n}\setminus\{0\})$ be an open conic subset, and let $\Phi\in C^{\infty}(V)$ be real-valued and positively homogenous of degree $1$ in the fiber variable. Let $m\in\R$ and $a\in S^{m}(\R^{n+1}\times\Rn)$ be such that $(z,\eta)\in V$ whenever $(z,\eta)\in\supp(a)$ and $\eta\neq0$. Suppose that $\rank\,\partial_{x\eta}^{2}\Phi(z_{0},\eta_{0})=n$ for all $(z_{0},\eta_{0})\in\supp(a)$ with $\eta_{0}\neq 0$, where we write $z=(x,t)\in\Rn\times\R$, and that there exists a compact set $K\subseteq\R^{n+1}$ such that $a(z,\eta)=0$ for all $(z,\eta)\in\R^{n+1}\times\Rn$ with $z\notin K$. Set
\begin{equation}\label{eq:Tstandard}
Tf(z):=\int_{\Rn}e^{i\Phi(z,\eta)}a(z,\eta)\wh{f}(\eta)\ud\eta
\end{equation}
for $f\in\Sw(\Rn)$ and $z\in\R^{n+1}$, and $T_{t}f(x):=Tf(x,t)$. Let $p\in[1,\infty]$ and $s\in\R$. Then the following statements hold.
\begin{enumerate}
\item\label{it:FIObounds21} One has $T_{t}:\HT^{s+m,p}_{FIO}(\Rn)\to \HT^{s,p}_{FIO}(\Rn)$ for all $t\in\R$, and
\begin{equation}\label{eq:FIOunif}
\sup_{t\in\R}\|T_{t}\|_{\La(\HT^{s+m,p}_{FIO}(\Rn),\HT^{s,p}_{FIO}(\Rn))}<\infty.
\end{equation}
\item\label{it:FIObounds22}
Suppose, additionally, that for all $(z_{0},\eta_{0})\in\supp(a)$ with $\eta_{0}\neq 0$, one has
\begin{equation}\label{eq:Gaussmap}
\rank\,\partial_{\eta\eta}^{2}(\partial_{z}\Phi(z_{0},\eta)\cdot G(z_{0},\eta_{0}))|_{\eta=\eta_{0}}=n-1,
\end{equation}
where $G(z_{0},\eta_{0})$ is the generalized Gauss map. Then, for all $p\in(2,\infty)$ and $s>d(p)+m$, one has 
\[
%\begin{equation}\label{eq:FIOsmooth}
T:\HT^{s,p}_{FIO}(\Rn)\to L^{p}(\R^{n+1}).
%\end{equation}
\]
\end{enumerate}
\end{proposition}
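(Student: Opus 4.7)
The plan is to establish parts (1) and (2) by adapting the constant-coefficient arguments behind Proposition \ref{prop:FIObounds1} to the present variable-coefficient setting, exploiting the compact spatial support of $a$ and the rank condition $\rank\,\partial_{x\eta}^{2}\Phi=n$ on $\supp(a)$.

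For part (1), first observe that $T_{t}\equiv 0$ whenever $t$ lies outside the compact projection of $K$ onto the $t$-axis, so the supremum in \eqref{eq:FIOunif} is really over a compact interval in $t$. I would cover $\supp(a)\cap\{\eta\neq 0\}$ by finitely many sufficiently small open conic pieces, on each of which the nondegeneracy of $\partial_{x\eta}^{2}\Phi$ (via the implicit function theorem) allows a smooth change of the fiber variable that puts the corresponding localized operator into a form associated to a local canonical graph, up to a pseudodifferential factor of order $m$ depending smoothly on $t$. Such pseudodifferential operators act boundedly between the appropriate Sobolev scales over $\HT^{p}_{FIO}(\Rn)$ by the symbol calculus in \cite{HaPoRo20}, while the remaining FIO piece preserves $\HT^{p}_{FIO}(\Rn)$ by the invariance results of \cite{Smith98a,HaPoRo20} (compare Proposition \ref{prop:FIObounds1}(1)). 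Since all steps depend smoothly on $t$ over the compact parameter set, the resulting bounds are uniform.

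For part (2), I would mimic the proof of \cite[Theorem 4.4]{Rozendaal22b}, replacing the constant-coefficient phase $t\phi(\eta)$ there by $\Phi(z,\eta)$ here. The central step is a second dyadic decomposition of $f\in\HT^{s,p}_{FIO}(\Rn)$ using a dyadic partition in $|\eta|$ together with the angular partition $(\varphi_{\omega})_{\omega\in S^{n-1}}$: at frequency scale $2^{k}$ and direction $\omega$, the corresponding piece of $f$ is concentrated on tubes of tangential width $2^{-k/2}$ and radial width $2^{-k}$, and the $\ell^{p}(\ud\omega)$-norm of these pieces, summed dyadically in $k$, controls $\|f\|_{\HT^{s,p}_{FIO}(\Rn)}$. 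Applying $T$ to a single such piece, one freezes $z$ at a reference point in $\supp(a)$ and Taylor-expands $\Phi$ in both $z$ and the fiber direction around the central wave vector. The rank-$(n-1)$ assumption \eqref{eq:Gaussmap} guarantees that the leading quadratic behavior parameterizes a hypersurface in $\R^{n+1}$ with non-vanishing Gaussian curvature, to which the $\ell^{p}$ decoupling inequality of \cite{Bourgain-Demeter17} applies, delivering the spacetime $L^{p}(\R^{n+1})$ estimate for each piece with loss $d(p)$. Summing in $\omega$ and $k$ using the hypothesis $s>d(p)+m$ completes the bound.

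The main obstacle is the decoupling step in the variable-coefficient regime, since the relevant curved hypersurface depends on the base point $z_{0}$ whereas decoupling is a statement about a single fixed surface. This is circumvented by taking the second dyadic decomposition fine enough that coefficients can be frozen at the wave-packet scale with controllable errors, and the rank-$(n-1)$ hypothesis \eqref{eq:Gaussmap} is precisely what ensures uniform non-vanishing Gaussian curvature after freezing, so that the decoupling constant from \cite{Bourgain-Demeter17} can be invoked uniformly across the support of $a$. Verifying that the error terms produced by this freezing and by the Taylor remainder of $\Phi$ can be absorbed into the Sobolev loss $s>d(p)+m$, while keeping the wave-packet machinery compatible with the definition of the $\HT^{s,p}_{FIO}(\Rn)$-norm, is the technical heart of the argument.
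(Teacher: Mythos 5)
Your proposal takes a genuinely different route from the paper: the paper does not reprove this proposition at all, but obtains part \eqref{it:FIObounds21} by citing \cite[Corollary 2.4 and Remark 2.6]{LiRoSoYa24} and part \eqref{it:FIObounds22} by deducing it from the local smoothing estimate \cite[Corollary 5.4]{LiRoSoYa24}, exactly as in the proof of \cite[Theorem 5.7]{LiRoSoYa24}. Your outline of part \eqref{it:FIObounds21} is consistent with how those fixed-time bounds are proved there: localize conically, use $\rank\,\partial^{2}_{x\eta}\Phi=n$ to recognize each piece as a compactly supported FIO of order $m$ associated with a local canonical graph, and invoke the invariance of $\HT^{p}_{FIO}(\Rn)$ under such operators from \cite{Smith98a,HaPoRo20}, with uniformity in $t$ coming from smooth dependence on a compact parameter set. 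That part is fine in outline.

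Part \eqref{it:FIObounds22}, however, has a genuine gap. You propose to run the argument of \cite[Theorem 4.4]{Rozendaal22b} and, at the decoupling step, to freeze the coefficients of $\Phi$ at the wave-packet scale and then apply the fixed-hypersurface $\ell^{p}$ decoupling inequality of \cite{Bourgain-Demeter17} uniformly over base points. This freezing does not work as stated: at frequency $2^{k}$, Taylor expanding $\Phi(z,\eta)$ in $z$ around a point $z_{0}$ produces phase errors of size roughly $2^{k}|z-z_{0}|^{2}$, which are only negligible on spacetime balls of radius $\lesssim 2^{-k/2}$; decoupling on each such small ball and then recombining the $\sim 2^{k(n+1)/2}$ balls by the triangle inequality reintroduces a loss far exceeding the allowed $2^{k(d(p)+\eps)}$. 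Passing from the constant-coefficient cone to a variable-coefficient phase satisfying the cinematic curvature condition \eqref{eq:Gaussmap} is precisely the content of the variable-coefficient decoupling/local smoothing theory of Beltran--Hickman--Sogge \cite{BeHiSo21}, which is proved by parabolic rescaling together with an induction on scales, not by a one-step freezing argument; this is also the machinery underlying \cite[Corollary 5.4]{LiRoSoYa24}, which the paper invokes instead. So the ``technical heart'' you flag at the end is not a routine verification to be absorbed into the $\eps$-loss in $s>d(p)+m$: it is a substantial theorem in its own right, and your proposal neither proves it nor cites it, which leaves the spacetime bound $T:\HT^{s,p}_{FIO}(\Rn)\to L^{p}(\R^{n+1})$ unestablished.
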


\begin{proof}
The result is contained in \cite{LiRoSoYa24}. More precisely, \eqref{it:FIObounds21} is \cite[Corollary 2.4 and Remark 2.6]{LiRoSoYa24}, and \eqref{it:FIObounds22} can be deduced from \cite[Corollary 5.4]{LiRoSoYa24} in the same way as in the proof of \cite[Theorem 5.7]{LiRoSoYa24}.
\end{proof}

\begin{remark}\label{rem:cinematic}
Recall that, for $(z_{0},\eta_{0})\in\supp(a)$ with $\eta_{0}\neq 0$, the generalized Gauss map is $G(z_{0},\eta_{0})=G_{0}(z_{0},\eta_{0})/|G_{0}(z_{0},\eta_{0})|$, where $G_{0}(z_{0},\eta_{0}):=\wedge_{i=1}^{n}\partial_{\eta_{i}}\partial_{z}\Phi(z_{0},\eta_{0})$. In other words, \eqref{eq:Gaussmap} says that the unit normal vectors $\pm\theta$ to the conical hypersurface $\{\partial_{z}\Phi(z_{0},\la \eta)\mid (z_{0},\eta)\in \supp(a),\la>0\}\subseteq\R^{n+1}$ at $\partial_{z}\Phi(z_{0},\eta_{0})$ satisfy 
\[
\rank\,\partial^{2}_{\eta\eta}(\partial_{z}\Phi(z_{0},\eta)\cdot\theta)
|_{\eta=\eta_{0}}=n-1.
\]
This is in turn equivalent to $\{\partial_{z}\Phi(z_{0},\la \eta)\mid (z_{0},\eta)\in \supp(a),\la>0\}\subseteq\R^{n+1}$ having $n-1$ non-vanishing principal curvatures, which, in the terminology of \cite{Sogge91}, means that the canonical relation of $T$ satisfies the \emph{cinematic curvature condition}.

Note also that, if \eqref{eq:Gaussmap} holds, then one cannot have $\partial_{t}\Phi(x_{0},t,\eta_{0})|_{t=t_{0}}=0$ for all $(z_{0},\eta_{0})$ in an open subset of $\supp(a)$, where we write $z_{0}=(x_{0},t_{0})$. Indeed, otherwise an open subset of the conical hypersurface $\{\partial_{z}\Phi(z_{0},\la \eta)\mid (z_{0},\eta)\in \supp(a),\la>0\}$ would be contained in the plane where the last coordinate vanishes, which would prohibit it from having $n-1$ nonvanishing principal curvatures.
\end{remark}

We will also use frequently that $\Hps$ is invariant under changes of coordinates, for all $1\leq p\leq \infty$ and $s\in\R$, as follows from \cite[Theorem 6.10]{HaPoRo20} (see also \cite[Proposition 3.3]{Rozendaal22b}).

\subsection{Spaces on manifolds}\label{subsec:spacesman}

Let $(M,g)$ be a (smooth) $n$-dimensional compact Riemannian manifold without boundary, and $K$ a finite collection of coordinate charts, $\ka:U_{\ka}\to \ka(U_{k})\subseteq\Rn$, such that $\cup_{\ka\in K}U_{\ka}=M$. Let $(\psi_{\ka})_{\ka\in K}\subseteq C^{\infty}(M)$ be such that 
\begin{equation}\label{eq:partitionunit}
\sum_{\ka\in K}\psi_{\ka}^{2}(x)=1
\end{equation}
for all $x\in M$, and such that $\supp(\psi_{\ka})\subseteq U_{\ka}$ for each $\ka\in K$. 

We denote the collection of smooth half densities on $M$ by $\Da(M,\Omega_{1/2})$, and its dual, the space of distributional half densities, by $\Da'(M,\Omega_{1/2})$. We recall that each $f\in\Da(M,\Omega_{1/2})$ is of the form $f=\tilde{f}\sqrt{d V_{g}}$ for some $\tilde{f}\in C^{\infty}(M)$, and each $f\in\Da'(M,\Omega_{1/2})$ is of the form $f=\tilde{f}(d V_{g})^{-1/2}$ for some linear functional $\tilde{f}$ on $C^{\infty}(M)$. Here $d V_{g}$ is the Riemannian density on $M$. 

Finally, for $\ka\in K$, we write $\ka^{*}$ for pullback by $\ka$, acting on either smooth or distributional half densities, and $\ka_{*}:=(\ka^{*})^{-1}$.

\begin{definition}\label{def:HpFIOM}
Let $p\in[1,\infty]$ and $s\in\R$. Then $\HpsM$ consists of those $f\in\Da'(M,\Omega_{1/2})$ such that $\ka_{*}(\psi_{k}f)\in \Hps$ for all $\ka\in K$, endowed with the norm $\|f\|_{\HpsM}:=\|(\ka_{*}(\psi_{\ka}f))_{\ka\in K}\|_{\ell^{p}(K;\Hps)}$.
\end{definition}

More explicitly,
\begin{equation}\label{eq:HpFIOnormM1}
\|f\|_{\HpsM}:=\Big(\sum_{\ka\in K}\|\ka_{*}(\psi_{\ka}f)\|_{\Hps}^{p}\Big)^{1/p}
\end{equation}
for $p<\infty$, and 
\begin{equation}\label{eq:HpFIOnormM2}
\|f\|_{\HT^{s,\infty}_{FIO}(M)}:=\max_{\ka\in K}\|\ka_{*}(\psi_{\ka}f)\|_{\HT^{s,\infty}_{FIO}(\Rn)}. 
\end{equation}

\begin{remark}\label{rem:defmanindependent}
Up to norm equivalence, the definition of $\HpsM$ is independent of the choice of $K$ and $(\psi_{\ka})_{\ka\in K}$ as bove. This follows from a simple calculation, using \eqref{eq:partitionunit} and the invariance of $\Hps$ under coordinate changes and multiplication by smooth cutoffs (see also \cite[Proposition 4.6]{LiRoSoYa24}). 
\end{remark}

\begin{remark}\label{rem:defman}
Since $K$ is finite, one could equally well replace the $\ell^{p}$ norms in \eqref{eq:HpFIOnormM1} and \eqref{eq:HpFIOnormM2} by an $\ell^{q}$ norm for any $1\leq q\leq\infty$. On the other hand, one can define $\HpsM$ on general Riemannian manifolds with bounded geometry, a class which includes both compact manifolds and $\Rn$, and for this larger class the $\ell^{p}$ norm in \eqref{eq:HpFIOnormM1} is natural, as is already evident on $\Rn$ (see \cite[Theorem 3.6]{LiRoSoYa24}).

In \cite{LiRoSoYa24}, $\HpsM$ was defined in terms of collections of geodesic normal coordinate charts satisfying a uniformity condition on the overlap of their domains. For finite $K$, the latter condition is automatically satisfied. Moreover, although one could choose $K$ to consist of geodesic normal coordinate charts, this assumption is not needed on compact manifolds, cf.~Remark \ref{rem:defmanindependent}. 
\end{remark}

The basic properties of $\Hps$ carry over to $\HpsM$, for all $p\in[1,\infty]$ and $s\in\R$. For example, $\HpsM$ is invariant under Fourier integral operators of order zero associated with a local canonical graph, cf.~\cite[Theorem 4.16]{LiRoSoYa24}, and the Sobolev embeddings
\begin{equation}\label{eq:SobolevM}
\HT^{s+s(p),p}(M)\subseteq \HT^{s,p}_{FIO}(M)\subseteq \HT^{s-s(p),p}(M)
\end{equation}
hold, by \cite[Theorem 4.15]{LiRoSoYa24}. Here $\HT^{s,p}(M)$ is defined in the same way as $\HpsM$, in local coordinates (in particular, \eqref{eq:SobolevM} then follows directly from \eqref{eq:Sobolev}). We also note that $\Da(M,\Omega_{1/2})$ lies dense in $\HpsM$  if $p<\infty$. %Moreover, these spaces behave in the same way as $\Hps$ under complex interpolation and duality (see \cite[Propositions 4.11 and 4.13 and Remark 4.12]{LiRoSoYa24}), and $\HT^{s,1}_{FIO}(M)$ has an atomic decomposition, cf.~\cite[Proposition 4.17]{LiRoSoYa24}.

\section{Maximal theorems}\label{sec:maximal}

In this section we prove various results about maximal functions, including in particular Theorems \ref{thm:maximalhypintro} and \ref{thm:maximalFIOintro}. We first prove two results concerning maximal functions for Fourier integral operators, from which we then derive statements about maximal functions associated with hypersurfaces in $\Rn$, complex spherical means and geodesic spheres on manifolds.

\subsection{Fourier integral operators}\label{subsec:FIOmaximal}

To deal with the low-frequency components of the operators that appear in this subsection, we will use the following quantified version of \cite[Lemma 1.17]{DosSantosFerreira-Staubach14}.

\begin{lemma}\label{lem:lowfreq}
There exists a $C\geq0$ such that the following holds. Let $b\in C^{\infty}(\Rn\times (\Rn\setminus\{0\}))$ be such that $b(x,\eta)=0$ for all $(x,\eta)\in\R^{2n}$ with $|\eta|>1$, and such that
\[
C_{\alpha}:=\sup\{|\eta|^{\max(0,-1+|\alpha|)}|\partial_{\eta}^{\alpha}b(x,\eta)|\mid (x,\eta)\in\Rn\times(\Rn\setminus\{0\})\}<\infty
\]
for every $\alpha\in\Z_{+}^{n}$ with $|\alpha|\leq n+1$. Set $C_{b}:=\max\{C_{\alpha}\mid \alpha\in\Z_{+}^{n},|\alpha|\leq n+1\}$. Then 
\[
\Big|\int_{\Rn}e^{i(x-y)\cdot\eta}b(x,\eta)\ud\eta\Big|\leq CC_{b}\lb x-y\rb^{-(n+1)}(1+\log(1+|x-y|))
\]
for all $x,y\in\Rn$. 
\end{lemma}
\begin{proof}
The proof is as in \cite[Lemma 1.17]{DosSantosFerreira-Staubach14}. For the convenience of the reader, we include the argument here.

Due to the support condition on $b$, it suffices to consider $x,y\in\Rn$ with $|x-y|\geq1$. Set 
\[
\beta(x,y,\eta):=(-i|x-y|^{-1}(x-y)\cdot\partial_{\eta})^{n}b(x,\eta)
\]
for $\eta\in\Rn\setminus\{0\}$, and let $\chi\in C^{\infty}_{c}(\Rn)$ be such that $\chi(\eta)=1$ if $|\eta|\leq 1$, and $\chi(\eta)=0$ if $|\eta|\geq 2$. Integration by parts then allows us to write
\begin{align*}
&|x-y|^{n}\int_{\Rn}e^{i(x-y)\cdot\eta}b(x,\eta)\ud\eta=\int_{\Rn}e^{i(x-y)\cdot\eta}\beta(x,y,\eta)\ud\eta\\
&=\int_{\Rn}e^{i(x-y)\cdot\eta}\chi(\eta/\veps)\beta(x,y,\eta)\ud\eta+\int_{\Rn}e^{i(x-y)\cdot\eta}(1-\chi(\eta/\veps))\beta(x,y,\eta)\ud\eta,
\end{align*}
for a given $0<\veps<1$.

By integrating in spherical coordinates, one sees that the first term on the second line above is bounded in absolute value by a constant multiple of $C_{b}\veps$. On the other hand, integrating by parts one more time shows that the second term equals 
\begin{align*}
&-|x-y|^{-1}\int_{\Rn}e^{i(x-y)\cdot\eta}\veps^{-1}(-i|x-y|^{-1}(x-y)\cdot\partial_{\eta}\chi)(\eta/\veps)\beta(x,y,\eta)\ud\eta\\
&+|x-y|^{-1}\int_{\Rn}e^{i(x-y)\cdot\eta}(1-\chi(\eta/\veps))(-i|x-y|^{-1}(x-y)\cdot\partial_{\eta}\beta(x,y,\eta))\ud\eta.
\end{align*}
Then integration in spherical coordinates shows that the term on the first line is bounded in absolute value by a constant multiple of $C_{b}|x-y|^{-1}$, and that the second term is bounded in absolute value by a multiple of $C_{b}|x-y|^{-1}\log\veps^{-1}$. Finally, setting $\veps=|x-y|^{-1}$ concludes the proof.
\end{proof}

We are now ready to prove a theorem about maximal functions for the Fourier integral operators from Proposition \ref{prop:FIObounds1}. 

\begin{theorem}\label{thm:maximalFIO1}
Let $V\subseteq \Rn\setminus\{0\}$ be an open conic subset, and let $\phi\in C^{\infty}(V)$ be real-valued, positively homogeneous of degree $1$, and such that $\rank\,\partial_{\eta\eta}^{2}\phi(\eta)=n-1$ for all $\eta\in V$. Let $m\in\R$ and $a\in S^{m}(\Rn)$ be such that $\supp(a)\subseteq V'\cup\{0\}$, for some conic $V'\subseteq V$ which is closed in $\Rn\setminus\{0\}$. Then, for all $p\in[1,\infty)$ and $s>d(p)+\frac{1}{p}+m$, and for each compact interval $I\subseteq(0,\infty)$, there exists a $C\geq0$ such that
\begin{equation}\label{eq:maximalFIO1}
\big\|\sup_{t\in I}|e^{it\phi(D)}a(tD)f|\big\|_{L^{p}(\Rn)}\leq C\|f\|_{\Hps}
\end{equation}
for all $f\in\Hps$.
\end{theorem}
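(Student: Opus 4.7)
My plan is to derive \eqref{eq:maximalFIO1} from Proposition \ref{prop:FIObounds1} by exploiting the one-dimensional Sobolev embedding $W^{\alpha,p}(\R) \hookrightarrow L^{\infty}(\R)$ for $\alpha > 1/p$, in order to convert the $t$-maximal estimate into a mixed-norm bound for a lifted Fourier integral operator. Fix a cutoff $\chi \in C^{\infty}_{c}(\R)$ with $\chi \equiv 1$ on $I$, set $\tilde F(t,x) := \chi(t) e^{it\phi(D)} a(tD) f(x)$, and observe that
\[
 \big\|\sup_{t \in I} |e^{it\phi(D)} a(tD) f|\big\|_{L^{p}(\Rn)}^{p} \leq \int_{\Rn} \sup_{t \in \R} |\tilde F(t,x)|^{p} \ud x \lesssim \|\lb D_{t}\rb^{\alpha} \tilde F\|_{L^{p}(\R \times \Rn)}^{p},
\]
by Sobolev embedding applied pointwise in $x$ and Fubini. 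It thus suffices to establish $\|\lb D_{t}\rb^{\alpha} \tilde F\|_{L^{p}(\R \times \Rn)} \lesssim \|f\|_{\Hps}$ for every $s > d(p) + m + \alpha$, and then let $\alpha$ decrease to $1/p$.

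Step two is to recognize $\lb D_{t}\rb^{\alpha} \tilde F$ as an FIO of the type handled in Proposition \ref{prop:FIObounds1}, with the same phase $\phi$ but a modified time-dependent symbol of order $m + \alpha$. A short computation with the $t$-Fourier transform (substituting $\tau = \phi(\xi) + \mu$) yields
\[
 \lb D_{t}\rb^{\alpha} \tilde F(t,x) = e^{it\phi(D_{x})}\bigl[c(t,D_{x}) f\bigr](x),
\]
where
\[
 c(t,\xi) := \int_{\R} e^{it\mu} \lb \phi(\xi) + \mu\rb^{\alpha}\, \F_{t' \to \mu}[\chi(t') a(t'\xi)](\mu) \ud \mu.
\]
Since $t' \mapsto \chi(t') a(t'\xi)$ lies in $C^{\infty}_{c}(\R)$ with seminorms of order $\lb \xi\rb^{m}$, its Fourier transform in $t'$ is Schwartz in $\mu$ with bounds uniform in $\xi$, and combined with $|\phi(\xi)| \lesssim |\xi|$, a standard symbol-calculus computation shows that $\{c(t,\cdot) : t \in \supp(\chi)\}$ is a uniformly bounded family in $S^{m+\alpha}(\Rn)$ with $\xi$-support in $V' \cup \{0\}$.

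For $p \in (2,\infty)$, a time-dependent version of Proposition \ref{prop:FIObounds1}(\ref{it:FIObounds12})---which follows from the same $\ell^{p}$ decoupling argument applied uniformly in $t \in \supp(\chi)$---yields $\|\lb D_{t}\rb^{\alpha} \tilde F\|_{L^{p}(\R \times \Rn)} \lesssim \|f\|_{\Hps}$ for every $s > d(p) + m + \alpha$, so that choosing $\alpha$ just above $1/p$ completes the argument in that range. For $p \in [1,2]$, where $d(p) = s(p)$ and local smoothing is unavailable, I instead combine Proposition \ref{prop:FIObounds1}(\ref{it:FIObounds11}) (applied to each $c(t,\cdot)$) with the Sobolev embedding $\HT^{s(p),p}_{FIO}(\Rn) \subseteq L^{p}(\Rn)$ from \eqref{eq:Sobolev}, obtaining $\sup_{t} \|\lb D_{t}\rb^{\alpha} \tilde F(t,\cdot)\|_{L^{p}(\Rn)} \lesssim \|f\|_{\HT^{s(p)+m+\alpha,p}_{FIO}(\Rn)}$; integration over the compact $t$-support of $\chi$ then contributes only a constant, and the condition $s > s(p) + m + 1/p$ matches $d(p) + m + 1/p$ for $p \leq 2$.

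The principal technical obstacle is verifying that $c$ satisfies uniform $S^{m+\alpha}$ symbol estimates: the weight $\lb \phi(\xi) + \mu\rb^{\alpha}$ scales like $\lb\xi\rb^{\alpha}$ for bounded $\mu$ but like $|\mu|^{\alpha}$ for $|\mu|$ large, so one must carefully exploit the uniform Schwartz decay in $\mu$ of $\F_{t'}[\chi(\cdot) a(\cdot\, \xi)](\mu)$ in order to control all mixed derivatives $\partial_{\xi}^{\beta} \partial_{t}^{k} c$ by $\lb \xi\rb^{m+\alpha - |\beta|}$. The subsidiary issue of extending Proposition \ref{prop:FIObounds1}(\ref{it:FIObounds12}) to families of symbols $c(t,\cdot)$ uniformly bounded in $S^{m+\alpha}$ is more routine, since the underlying $\ell^{p}$ decoupling inequality acts frequency-by-frequency and is insensitive to $t$-dependence.
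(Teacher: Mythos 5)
The central analytic claim of your argument---that the family $c(t,\cdot)$ obtained by commuting $\langle D_{t}\rangle^{\alpha}$ past the propagator is uniformly bounded in $S^{m+\alpha}(\Rn)$---fails under the hypotheses of the theorem, because nothing forces $\phi$ to be elliptic (nonvanishing) on $V'$. The assumptions only require homogeneity and $\rank\,\partial^{2}_{\eta\eta}\phi=n-1$; for instance $\phi(\eta)=|\eta|-\sqrt{2}\,\eta_{1}$ satisfies them and vanishes on a full cone, which may meet $V'$, and phases with this feature do occur in the hypersurface applications of Section 3.2, so the general case matters. In the region where $|\phi(\xi)|\lesssim 1$ but $|\xi|$ is large, the zeroth-order bound $|c(t,\xi)|\lesssim\langle\xi\rangle^{m+\alpha}$ still holds, but a $\xi$-derivative falling on the weight produces $\alpha\langle\phi(\xi)+\mu\rangle^{\alpha-2}(\phi(\xi)+\mu)\,\partial_{\xi}\phi(\xi)$, which for $|\mu|\lesssim1$ (where the Schwartz factor in $\mu$ is concentrated) is of size $O(1)$ rather than $O(\langle\xi\rangle^{\alpha-1})$; hence $|\partial_{\xi}c(t,\xi)|\sim\langle\xi\rangle^{m}$, which is not $\lesssim\langle\xi\rangle^{m+\alpha-1}$ in the relevant range $\alpha<1$, and higher derivatives are even further from the required decay. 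So $c(t,\cdot)$ is not an $S^{m+\alpha}$ family, and the application of (the extended) Proposition \ref{prop:FIObounds1} collapses unless one adds the hypothesis $|\phi(\xi)|\gtrsim|\xi|$ on $\supp(a)$. This is precisely the difficulty that a fractional power of the phase creates, and it is what the paper's proof avoids: there only one full $t$-derivative is taken (fundamental theorem of calculus plus H\"older at each dyadic frequency), and $\partial_{t}$ brings down $i\phi(\xi)$, with $\phi\,a(t\cdot)$ an honest symbol of order $m+1$ no matter where $\phi$ vanishes. Note also that $c(t,\cdot)$ is not even smooth at $\xi=0$ (since $\phi$, homogeneous of degree one, is not), so a preliminary low-frequency splitting as in the paper would be needed in any case.

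There are further gaps you should be aware of. After applying $\langle D_{t}\rangle^{\alpha}$, the function $\tilde F$ is no longer compactly supported in $t$, so in the $p\le2$ branch the statement that ``integration over the compact $t$-support of $\chi$ contributes only a constant'' is incorrect as written; one must use the rapid off-support decay of the kernel of $\langle D_{t}\rangle^{\alpha}$ and estimate the tails (including $t\le0$), and in the $p>2$ branch the putative local smoothing bound must likewise be applied on a compact interval in $(0,\infty)$ plus a separate tail argument. The time-dependent generalizations of Proposition \ref{prop:FIObounds1}\eqref{it:FIObounds11}--\eqref{it:FIObounds12} are asserted rather than proved; the paper arranges matters so that only the proposition as stated is needed, since $\phi(\xi)a(r\xi)=r^{-1}(\phi a)(r\xi)$ by homogeneity. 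Finally, the theorem asserts \eqref{eq:maximalFIO1} for all $f\in\Hps$, and a nontrivial portion of the paper's proof is devoted to the measurability and well-definedness of $\sup_{t\in I}|e^{it\phi(D)}a(tD)f|$ for general $f$ and to the limiting argument from Schwartz data; your proposal omits this step entirely.
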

By Theorems \ref{thm:sharpFIO1} and \ref{thm:sharpFIO2}, \eqref{eq:maximalFIO1} is essentially sharp for all $p\in[1,2]\cup [\frac{2(n+1)}{n-1},\infty)$ if there exist a non-empty conic subset $V''\subseteq V'$ and $c',C'>0$ such that $|a(\eta)|\geq c'|\eta|^{m}$ for all $\eta\in V''$ with $|\eta|\geq C'$.
\begin{proof}
First consider $f\in\Sw(\Rn)$. Then 
$(x,t)\mapsto e^{it\phi(D)}a(tD)f(x)$ is a continuous function on $\Rn\times I$. Hence $\sup_{t\in I}|e^{it\phi(D)}a(tD)f(x)|$ is a lower semi-continuous function of $x$, and as such measurable. This implies that the left-hand side of \eqref{eq:maximalFIO1} is well-defined.

\subsubsection{The low frequencies}

Now, to prove \eqref{eq:maximalFIO1}, we first deal with the low-frequency component of $a$. Let $r\in C^{\infty}_{c}(\Rn)$ be such that $r(\eta)=1$ if $|\eta|\leq 1/2$, and $r(\eta)=0$ if $|\eta|>1$. Note that $e^{it\phi(D)}a(tD)r(D)$ has kernel $K_{t}$ given by
\[
K_{t}(x,y)=\frac{1}{(2\pi)^{n}}\int_{\Rn}e^{i((x-y)\cdot\eta}e^{it\phi(\eta)}a(t\eta)r(\eta)\ud\eta,
\]
for $t\in I$ and $x,y\in\Rn$. Moreover, $\eta\mapsto e^{it\phi(\eta)}a(t\eta)r(\eta)$ is smooth on $\R^{n}\setminus\{0\}$, and
\[
\sup\big\{|\eta|^{\max(0,-1+|\alpha|)}\partial_{\eta}^{\alpha}\big(e^{it\phi(\eta)}a(t\eta)r(\eta)\big)\,\big|\, t\in I,\eta\in \Rn\setminus\{0\}\big\}<\infty
\]
for all $\alpha\in\Z_{+}^{n}$. Hence 
\[
\sup\{\lb x-y\rb^{n+1/2}|K_{t}(x,y)|\mid x,y\in\Rn,t\in I\}<\infty,
\]
by Lemma \ref{lem:lowfreq}.

Next, let $r'\in C^{\infty}_{c}(\Rn)$ be such that $r(D)=r(D)r'(D)$. Since $r'(D)$ is smoothing, Young's inequality and \eqref{eq:Sobolev} yield
\begin{align*}
&\big\|\sup_{t\in I}|e^{it\phi(D)}a(tD)r(D)f|\big\|_{L^{p}(\Rn)}^{p}=\int_{\Rn}\sup_{t\in I}\Big|\int_{\Rn}K_{t}(x,y)(r'(D)f)(y)\ud y\Big|^{p}\ud x\\
&\lesssim \int_{\Rn}\Big(\int_{\Rn}\lb x-y\rb^{-n-1/2}|(r'(D)f)(y)|\ud y\Big)^{p}\ud x\lesssim \|r'(D)f\|_{L^{p}(\Rn)}^{p}\lesssim \|f\|_{\Hps}^{p}.
\end{align*}
 In the remainder, it thus suffices to show that
\[
\big\|\sup_{t\in I}|e^{it\phi(D)}a(tD)f|\big\|_{L^{p}(\Rn)}\lesssim \|f\|_{\Hps}
\]
under the additional assumption that $\wh{f}(\xi)=0$ if $|\xi|\leq 1/2$.

\subsubsection{The high frequencies}

Note that
\begin{equation}\label{eq:mainproof0}
\begin{aligned}
\sup_{t\in I}\|e^{it\phi(D)}a(tD)f\|_{L^{p}(\Rn)}&\lesssim \sup_{t\in I}\|e^{it\phi(D)}a(tD)f\|_{\HT^{s(p),p}_{FIO}(\Rn)}\\
&\lesssim \|f\|_{\HT^{s(p)+m,p}_{FIO}(\Rn)}.
\end{aligned}
\end{equation}
Here we used the embeddings $\HT^{s(p),p}_{FIO}(\Rn)\subseteq \HT^{p}(\Rn)\subseteq L^{p}(\Rn)$ for the first inequality, and \eqref{eq:FIObounds1} for the second one. Moreover, for each $\veps>0$, one has
\begin{equation}\label{eq:mainproof1}
\Big(\int_{I}\|e^{ir\phi(D)}a(rD)f\|_{L^{p}(\Rn)}^{p}\ud r\Big)^{1/p}\lesssim \|f\|_{\HT^{d(p)+m+\veps,p}_{FIO}(\Rn)},
\end{equation}
by \eqref{eq:FIObounds1} and \eqref{eq:FIObounds2}. The same arguments yield
\begin{equation}\label{eq:mainproof2}
\Big(\int_{I}\|\partial_{r}(e^{ir\phi(D)}a(rD)f)\|_{L^{p}(\Rn)}^{p}\ud r\Big)^{1/p}\lesssim \|f\|_{\HT^{d(p)+m+1+\veps,p}_{FIO}(\Rn)},
\end{equation}
since $\wh{f}(\xi)=0$ for all $\xi\in\Rn$ with $|\xi|\leq 1/2$. 

Now, if $(\psi_{k})_{k=0}^{\infty}\subseteq C^{\infty}_{c}(\Rn)$ is a standard Littlewood--Paley decomposition, then the collection $(\psi_{k}(D))_{k=0}^{\infty}\subseteq \La(\Hps)$ is uniformly bounded, given that the convolution operators $\psi_{k}(D)$ have kernels that are uniformly in $L^{1}(\Rn)$. %Moreover, by Lemma \ref{lem:lowfreq}, one has
%\[
%\|M(\psi_{0}(D)f)\|_{L^{p}(\Rn)}\lesssim \|f\|_{\Hps}
%\] 
%for all $f\in\Hps$. 
Hence, by standard arguments and because $\wh{f}(\xi)=0$ if $|\xi|\leq 1/2$, it suffices to show that, for all $\veps>0$ and $k\geq0$, one has
\[
\big\|\sup_{t\in I}|e^{it\phi(D)}a(tD)f|\big\|_{L^{p}(\Rn)}\lesssim 2^{k(d(p)+\frac{1}{p}+m+\veps)}\|f\|_{\Hp}
\]
if $\supp(\wh{f}\,)\subseteq\{\xi\in\Rn\mid 2^{k-1}\leq |\xi|\leq 2^{k+1}\}$. 

Fix such $\veps$ and $k$, and let $t_{0}$ be the left endpoint of $I$. Then one can use the fundamental theorem of calculus and H\"{o}lder's inequality to write
\begin{align*}
&|e^{it\phi(D)}a(tD)f(x)|^{p}= |e^{it_{0}\phi(D)}a(t_{0}D)f(x)|^{p}+\int_{t_{0}}^{t}\partial_{r}|e^{ir\phi(D)}a(rD)f(x)|^{p}\ud r\\
&\leq |e^{it_{0}\phi(D)}a(t_{0}D)f(x)|^{p}\\
&+p\Big(\int_{I}|e^{ir\phi(D)}a(rD)f(x)|^{p}\ud r\Big)^{1/p'}\Big(\int_{I}|\partial_{r}(e^{ir\phi(D)}a(rD)f)(x)|^{p}\ud r\Big)^{1/p}
\end{align*}
for all $t\in I$ and $x\in\Rn$. Hence, after integrating in $x$ and applying H\"{o}lder's inequality once again, one can use \eqref{eq:mainproof0}, \eqref{eq:mainproof1} and \eqref{eq:mainproof2} to write
\begin{align*}
&\big\|\sup_{t\in I}|e^{it\phi(D)}a(tD)f|\big\|_{L^{p}(\Rn)}^{p}\\
&\leq \|e^{it_{0}\phi(D)}a(t_{0}D)f\|_{L^{p}(\Rn)}^{p}\\
&+p\Big(\int_{\Rn}\int_{I}|e^{ir\phi(D)}a(rD)f(x)|^{p}\ud r\ud x\!\Big)^{1/p'}\!\Big(\int_{\Rn}\int_{I}|\partial_{r}(e^{ir\phi(D)}a(rD)f)(x)|^{p}\ud r\ud x\!\Big)^{1/p}\\
&\lesssim \|f\|_{\HT^{s(p)+m,p}_{FIO}(\Rn)}^{p}+\|f\|_{\HT^{d(p)+m+\veps,p}_{FIO}(\Rn)}^{p/p'}\|f\|_{\HT^{d(p)+m+1+\veps,p}_{FIO}(\Rn)}\\
&\eqsim\big(2^{kp(s(p)+m)}+2^{k(d(p)+m+\veps)\frac{p}{p'}}2^{k(d(p)+m+1+\veps)}\big)\|f\|_{\Hp}^{p}\\
&\eqsim 2^{kp(d(p)+\frac{1}{p}+m+\veps)}\|f\|_{\Hp}^{p},
\end{align*}
as required.

\subsubsection{Conclusion of the proof}

\vanish{It remains to extend \eqref{eq:maximalFIO1} to general $f\in\Hps$. Here it is relevant to note that, a priori, it is not even clear why the left-hand side of \eqref{eq:maximalFIO1} is well defined for such $f$. %This problem is reminiscent of a similar issue for the spherical maximal function, cf.~\cite[Section XI.3.5]{Stein93} and Remark \ref{rem:extension} below. Since the setting under consideration is  somewhat different, we provide the argument in detail.
%Fix $f\in\Hps$. 
Indeed, initially $e^{it\phi(D)}a(tD)f$ is merely defined as a tempered distribution for each $t\in I$, but Proposition \ref{prop:FIObounds1} shows that it can be identified with a function $g_{t}$ in $L^{p}(\Rn)$. Of course, each such identification is well defined up to ($t$-dependent) sets of measure zero, and we will show that the $g_{t}$ can be chosen such that %. Hence we have to prove that there exists a single set $U\subseteq\Rn$ of measure zero such that $e^{it\phi(D)}a(tD)f$ can be identified with a function $g_{t}$ in $L^{p}(\Rn)$ for every $t\in I$, the map $x\mapsto \sup_{t\in I}|e^{it\phi(D)}a(tD)f(x)|\ind_{\Rn\setminus U}(x)$ 
$x\mapsto \sup_{t\in I}|g_{t}(x)|$ is measurable on all of $\Rn$, and such that 
\begin{equation}\label{eq:maximalFIO1gt}
\big\|\sup_{t\in I}|g_{t}|\big\|_{L^{p}(\Rn)}\lesssim \|f\|_{\Hps}
\end{equation} 
holds.

To this end, note that there exists a sequence $(f_{k})_{k=0}^{\infty}\subseteq\Sw(\Rn)$ converging to $f$ in $\Hps$. Set $F_{k}(x,t):=e^{it\phi(D)}a(tD)f_{k}(x)$ for $k\geq0$ and %\quad \text{and}\quad F(x,t):=e^{it\phi(D)}a(tD)f(x),
%
 %all 
 $(x,t)\in\Rn\times I$. %, and $F_{k}\in\Sw(\Rn;C[1,2])$. 
By \eqref{eq:maximalFIO1}, $(F_{k})_{k=0}^{\infty}$ is a Cauchy sequence in $L^{p}(\Rn;C(I))$, so it has a limit function $F$ in $L^{p}(\Rn;C(I))$ such that $x\mapsto \sup_{t\in I}|F(x,t)|$ is measurable on $\Rn$, and such that
\begin{equation}\label{eq:Fbound}
\big\|\sup_{t\in I}|F(\cdot,t)|\big\|_{L^{p}(\Rn)}\lesssim\|f\|_{\Hps}
\end{equation}
holds. %We may assume that in fact $F(x,\cdot)\in C[1,2]$ for every $x\in\Rn$. 
%After replacing $(F_{k})_{k=0}^{\infty}$ by a subsequence, w
%Let $U\subseteq\Rn$ be a set of measure zero such that, for all $x\notin U$, one has $F_{k}(x,t)\to F(x,t)$, uniformly in $t\in I$. %For $(x,t)\in\Rn$, set $\wt{F}(x,t):=\overline{F}(x,t)$ if $x\in \Rn\setminus U$, and $F(x,t)=0$ otherwise.
%Note that $F(\cdot,t)$ is a measurable function on $\Rn$ for each $t\in[1,2]$.
 
Next, by Proposition \ref{prop:FIObounds1} \eqref{it:FIObounds11}, for each $t\in I$ the sequence $(F_{k}(\cdot,t))_{k=0}^{\infty}$ is Cauchy in $L^{p}(\Rn)$. Hence there exists a function $h_{t}$ in  $L^{p}(\Rn)$ such that $F_{k}(x,t)\to h_{t}(x)$ as $k\to\infty$, for all $x\in\Rn$. Moreover, %Proposition \ref{prop:FIObounds1} \eqref{it:FIObounds11} implies that 
$h_{t}$ coincides with $e^{it\phi(D)}a(tD)f$ as an element of $\Sw'(\Rn)$. % one has $F_{k}(\cdot,t)\to e^{it\phi(D)}a(tD)f(\cdot)$ in $L^{p}(\Rn)$ as $k\to\infty$. 
%So there exists a set $U_{t}\subseteq\Rn$ of measure zero, and a subsequence $(F_{m_{k,t}})_{k=0}^{\infty}$ of $(F_{k})_{k=0}^{\infty}$, such that $F_{m_{k,t}}(x,t)\to e^{it\phi(D)}a(tD)f(x)$ for $x\notin U_{t}$. 
Given that the latter statement remains valid if we modify $h_{t}$ on the null set $U$, we may write
\[
g_{t}(x):=\ind_{U}(x)F(x,t)+\ind_{\Rn\setminus U}(x)h_{t}(x)
\]
for $x\in\Rn$, and then identify the distribution $e^{it\phi(D)}a(tD)f$ with the function $g_{t}$.

 Finally, we combine the previous two steps and note that
\[
g_{t}(x)=h_{t}(x)=\lim_{k\to\infty}F_{k}(x,t)=F(x,t)
\]
for all $x\in\Rn\setminus U$ and $t\in I$, whereas $g_{t}(x)=F(x,t)$ for $x\in U$ by definition. Now apply \eqref{eq:Fbound} to obtain \eqref{eq:maximalFIO1gt}. % and conclude the proof. 

}
 It remains to extend \eqref{eq:maximalFIO1} to general $f\in\Hps$. Here it is relevant to note that, a priori, it is not even clear why the left-hand side of \eqref{eq:maximalFIO1} is well defined for such $f$. %This problem is reminiscent of a similar issue for the spherical maximal function, cf.~\cite[Section XI.3.5]{Stein93} and Remark \ref{rem:extension} below. Since the setting under consideration is  somewhat different, we provide the argument in detail.
%Fix $f\in\Hps$. 
Indeed, initially $e^{it\phi(D)}a(tD)f$ is defined as a tempered distribution for each $t\in I$, but Proposition \ref{prop:FIObounds1} shows that it can be identified with an element of $L^{p}(\Rn)$. We will  prove that there exists a set $U\subseteq\Rn$ of measure zero with the property that, for every $t\in I$, there exists a representative $F(\cdot,t)$ of $e^{it\phi(D)}a(tD)f$ in $L^{p}(\Rn)$ such that the function $x\mapsto \sup_{t\in I}|F(x,t)|\ind_{\Rn\setminus U}(x)$ is measurable and satisfies $\|\sup_{t\in I}|F(\cdot,t)|\|_{L^{p}(\Rn)}\lesssim\|f\|_{\Hps}$.

There exists a sequence $(f_{k})_{k=0}^{\infty}\subseteq\Sw(\Rn)$ converging to $f$ in $\Hps$. Set $F_{k}(x,t):=e^{it\phi(D)}a(tD)f_{k}(x)$ for $k\geq0$ and %\quad \text{and}\quad F(x,t):=e^{it\phi(D)}a(tD)f(x),
%
 %all 
 $(x,t)\in\Rn\times I$. %, and $F_{k}\in\Sw(\Rn;C[1,2])$. 
By \eqref{eq:maximalFIO1}, $(F_{k})_{k=0}^{\infty}$ is a Cauchy sequence in $L^{p}(\Rn;C(I))$, so it has a limit $F\in L^{p}(\Rn;C(I))$ satisfying
\begin{equation}\label{eq:Fbound}
\big\|\sup_{t\in I}|F(\cdot,t)|\big\|_{L^{p}(\Rn)}\lesssim\|f\|_{\Hps},
\end{equation}
and 
\[
\big\|\sup_{t\in I}|F_k(\cdot,t)-F(\cdot,t)|\big\|_{L^{p}(\Rn)}\lesssim\|f_k-f\|_{\Hps}.
\]
%We may assume that in fact $F(x,\cdot)\in C[1,2]$ for every $x\in\Rn$. 
%After replacing $(F_{k})_{k=0}^{\infty}$ by a subsequence, w
In particular, due to the latter inequality, we may also suppose that there exists a set $U\subseteq\Rn$ of measure zero such that, for all $x\notin U$, one has $F_{k}(x,t)\to F(x,t)$, uniformly in $t\in I$. %For $(x,t)\in\Rn$, set $\wt{F}(x,t):=\overline{F}(x,t)$ if $x\in \Rn\setminus U$, and $F(x,t)=0$ otherwise.
%Note that $F(\cdot,t)$ is a measurable function on $\Rn$ for each $t\in[1,2]$.
 
On the other hand, by Proposition \ref{prop:FIObounds1} \eqref{it:FIObounds11}, for each $t\in I$ one has $F_{k}(\cdot,t)\to e^{it\phi(D)}a(tD)f(\cdot)$ in $L^{p}(\Rn)$ as $k\to\infty$. So there exists a set $U_{t}\subseteq\Rn$ of measure zero, and a subsequence $(F_{m_{k,t}})_{k=0}^{\infty}$ of $(F_{k})_{k=0}^{\infty}$, such that $F_{m_{k,t}}(x,t)\to e^{it\phi(D)}a(tD)f(x)$ for $x\notin U_{t}$. It follows that $e^{it\phi(D)}a(tD)f(x)=F(x,t)$ for $x\notin U\cup U_{t}$. Since $e^{it\phi(D)}a(tD)f$ %\in L^{p}(\Rn)$ 
is only determined up to sets of measure zero, % anyway, 
$F(\cdot,t)$ is in fact a representative %of $e^{it\phi(D)}a(tD)f$ 
of $e^{it\phi(D)}a(tD)f$ in $L^{p}(\Rn)$. Now the proof is concluded by applying \eqref{eq:Fbound}.
\end{proof}

The following corollary contains Theorem \ref{thm:maximalFIOintro} as a special case.

\begin{corollary}\label{cor:pointwiseFIO1}
Let $V\subseteq \Rn\setminus\{0\}$, $\phi\in C^{\infty}(V)$, $m\in\R$ and $a\in S^{m}(\Rn)$ be as in Theorem \ref{thm:maximalFIO1}, and let $p\in[1,\infty)$ and $s>d(p)+\frac{1}{p}+m$. Then, for all $t_{0}>0$ and $f\in\Hps$, and for almost all $x\in\Rn$, one has
\begin{equation}\label{eq:pointwise1}
e^{it\phi(D)}a(tD)f(x)\to e^{it_{0}\phi(D)}a(t_{0}D)f(x)
\end{equation}
as $t\to t_{0}$. Moreover, if $V=\Rn\setminus\{0\}$ and $a\equiv 1$, then \eqref{eq:maximalFIO1} holds for any bounded interval $I\subseteq\R$, and \eqref{eq:pointwise1} holds for all $t_{0}\in\R$.
\end{corollary}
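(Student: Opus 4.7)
The plan is to reuse the family $F \in L^p(\Rn; C(I))$ already constructed within the proof of Theorem~\ref{thm:maximalFIO1}, rather than running a separate Schwartz-approximation argument. I fix $t_0 > 0$ and $f \in \Hps$, and choose a compact interval $I \subseteq (0,\infty)$ with $t_0$ in its interior. That proof produces a null set $U \subseteq \Rn$ and a function $F \in L^p(\Rn; C(I))$ with the properties that $F(x,\cdot)$ is continuous on $I$ for every $x \notin U$, and that the representative of $e^{it\phi(D)}a(tD)f$ can be chosen so that $e^{it\phi(D)}a(tD)f(x) = F(x,t)$ for all $x \notin U$ and all $t \in I$. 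Fixing this choice of representative, \eqref{eq:pointwise1} becomes immediate: for $x \notin U$, the map $t \mapsto e^{it\phi(D)}a(tD)f(x) = F(x,t)$ is continuous on $I$ and in particular converges to $F(x,t_0)$ as $t \to t_0$.

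For the final clause, I would first extend \eqref{eq:maximalFIO1} to bounded intervals $I \subseteq \R$ in the special case $V = \Rn\setminus\{0\}$, $a \equiv 1$, and then re-apply the first paragraph. For the extension, I choose $T>0$ with $I \subseteq [-T,T]$ and split the supremum into contributions from $I \cap [0,T]$ and $I \cap [-T,0]$. The substitution $s = -t$ together with the replacement of $\phi$ by $\tilde\phi := -\phi$ (which is again positively homogeneous of degree $1$ and satisfies the same non-vanishing-curvature condition on $\Rn\setminus\{0\}$) reduces the second contribution to a supremum of $|e^{is\tilde\phi(D)}f|$ over $s \in [0,T]$. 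It therefore suffices to prove the maximal bound on $[0,T]$, which is done by rerunning the fundamental-theorem-of-calculus argument in the proof of Theorem~\ref{thm:maximalFIO1} with left endpoint $t_0 = 0$; the only new input is the initial estimate analogous to \eqref{eq:mainproof0} at $t_0 = 0$, which for $a \equiv 1$ reduces to $\|f\|_{L^p(\Rn)} \lesssim \|f\|_{\HT^{s(p),p}_{FIO}(\Rn)}$, a case of the Sobolev embedding \eqref{eq:Sobolev}. A short piecewise check on $[1,2]$, $[2,\tfrac{2(n+1)}{n-1}]$, and $[\tfrac{2(n+1)}{n-1},\infty)$ confirms that the hypothesis $s > d(p) + 1/p$ implies $s > s(p)$, so that embedding is indeed in force.

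The main point of care is the coherent choice of representative. Since for $f \in \Hps$ the object $e^{it\phi(D)}a(tD)f(x)$ is defined only up to a $t$-dependent null set, one cannot naively interpret $\lim_{t \to t_0}$ pointwise in $x$, and the measurability of such a limit is not automatic. This is precisely the subtlety already resolved by the construction of $F$ in the proof of Theorem~\ref{thm:maximalFIO1}, so reusing that representative is the cleanest route. A more ad hoc alternative is to approximate $f$ by $f_k \in \Sw(\Rn)$ in $\Hps$, observe that each $e^{it\phi(D)}a(tD)f_k$ is jointly continuous in $(x,t)$, and bound the $L^p$ norm of $\limsup_{t \to t_0}|e^{it\phi(D)}a(tD)(f - f_k)|$ by $2\big\|\sup_{t \in I}|e^{it\phi(D)}a(tD)(f-f_k)|\big\|_{L^p(\Rn)} \lesssim \|f - f_k\|_{\Hps}$ via \eqref{eq:maximalFIO1}; but this just re-executes the construction of $F$ in a slightly less efficient form.
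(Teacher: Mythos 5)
Your first paragraph is fine and is essentially the paper's argument made explicit: the paper proves \eqref{eq:pointwise1} by approximating $f$ by Schwartz functions and applying \eqref{eq:maximalFIO1} ``in a standard manner'' (using $d(p)+\tfrac1p\geq s(p)$), which is exactly the construction of the representative $F\in L^{p}(\Rn;C(I))$ at the end of the proof of Theorem \ref{thm:maximalFIO1} that you propose to reuse; your remark about the coherent choice of representatives is the right point to emphasize, and your check that $s>d(p)+\tfrac1p$ implies $s>s(p)$ is correct.

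For the final clause your route differs from the paper's, and as written it has a gap. You claim that rerunning the fundamental-theorem-of-calculus argument on $[0,T]$ needs, as its ``only new input,'' the endpoint estimate at $t_{0}=0$. That is not so: the FTC argument also consumes the time-integrated estimates \eqref{eq:mainproof1} and \eqref{eq:mainproof2} (and, for $p\leq 2$, the uniform fixed-time bound behind \eqref{eq:mainproof0}) over the interval in question, and these come from Proposition \ref{prop:FIObounds1}, whose statement — both \eqref{eq:FIObounds1} and \eqref{eq:FIObounds2} — is restricted to compact intervals $I\subseteq(0,\infty)$. An interval $[0,T]$ is not covered, so your reduction silently uses Proposition \ref{prop:FIObounds1} outside its hypotheses. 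In the special case $a\equiv1$, $V=\Rn\setminus\{0\}$ the needed extension is true and easy, but it has to be said: for instance, write $e^{ir\phi(D)}=e^{i(r+c)\phi(D)}e^{-ic\phi(D)}$ with $c>0$ and use the boundedness of $e^{-ic\phi(D)}$ on $\Hps$ to shift the $r$-integration to a compact subinterval of $(0,\infty)$. Once you invoke that shift, though, you may as well apply it at the level of the maximal function itself, which is precisely what the paper does: for a bounded interval $I=[t_{1},t_{2}]\subseteq\R$ and $t_{3}<t_{1}$ it writes $e^{it\phi(D)}f=e^{i(t-t_{3})\phi(D)}(e^{it_{3}\phi(D)}f)$, applies Theorem \ref{thm:maximalFIO1} on the compact interval $[t_{1}-t_{3},t_{2}-t_{3}]\subseteq(0,\infty)$, and uses $\|e^{it_{3}\phi(D)}f\|_{\Hps}\eqsim\|f\|_{\Hps}$. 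This one-line conjugation handles negative times simultaneously, so no splitting of $I$ or replacement of $\phi$ by $-\phi$ is needed, and no re-examination of the FTC argument at the endpoint $0$ is required. Your argument becomes correct if you either insert the shift-based extension of \eqref{eq:mainproof1}--\eqref{eq:mainproof2} to $[0,T]$, or simply replace the whole second step by the conjugation trick above.
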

\begin{proof}
First note that \eqref{eq:pointwise1} holds for all $x\in\Rn$ if $f\in\Sw(\Rn)$. Since the Schwartz functions lie dense in $\Hps$ if $p<\infty$, and because $d(p)+\frac{1}{p}\geq s(p)$, one can apply \eqref{eq:maximalFIO1} in a standard manner to obtain \eqref{eq:pointwise1}.

Next, suppose that $V=\Rn\setminus\{0\}$ and that $a\equiv 1$, and let $I=[t_{1},t_{2}]\subseteq\R$ be a bounded interval. For $t_{3}<t_{1}$, one can use \eqref{eq:maximalFIO1} and Proposition \ref{prop:FIObounds1} \eqref{it:FIObounds11} to write
\begin{align*}
\big\|\sup_{t\in I}|e^{it\phi(D)}f|\big\|_{L^{p}(\Rn)}&=\big\|\sup_{t_{1}-t_{3}\leq t\leq t_{2}-t_{3}}|e^{it\phi(D)}(e^{it_{3}\phi(D)}f)|\big\|_{L^{p}(\Rn)}\\
&\lesssim \|e^{it_{3}\phi(D)}f\|_{\Hps}\eqsim \|f\|_{\Hps}
\end{align*}
for all $f\in\Hps$. Now the extension of \eqref{eq:pointwise1} to $t_{0}\in\R$ follows as before.
\end{proof}

\begin{remark}\label{rem:pinfty}
Consider the case where $p=\infty$ in Theorem \ref{thm:maximalFIO1}. For all $s>d(\infty)+m$ and $f\in\HT^{s,\infty}_{FIO}(\Rn)$, Proposition \ref{prop:FIObounds1} yields $e^{it\phi(D)}a(tD)f\in \HT^{s-d(\infty)-m,\infty}(\Rn)\subseteq L^{\infty}(\Rn)\cap C(\Rn)$ for each $t>0$, with
\begin{equation}\label{eq:pinfty}
\sup_{t\in I}\sup_{x\in\Rn}|e^{it\phi(D)}a(tD)f(x)|\lesssim \|f\|_{\HT^{s,\infty}_{FIO}(\Rn)}
\end{equation}
for any compact interval $I\subseteq(0,\infty)$. Moreover, if $f\in\overline{\Sw(\Rn)}\subseteq\HT^{s,\infty}_{FIO}(\Rn)$, then $t\mapsto e^{it\phi(D)}a(tD)f$ is continuous as an $L^{\infty}(\Rn)$-valued map, and \eqref{eq:pointwise1} holds as well. %Under the assumptions of the final statement in Corollary \ref{cor:pointwiseFIO1}, one may let $I$ be an arbitrary bounded interval in $\R$. 
Finally, by Theorem \ref{thm:sharpFIO1},  \eqref{eq:pinfty} is essentially sharp under the same assumptions as mentioned below Theorem \ref{thm:maximalFIO1}.

The case $p=\infty$ of the other maximal function bounds in this section %, namely Theorems \ref{thm:maximalFIO2}, \eqref{thm: and Propositions \ref{prop:complexsphere} and \ref{prop:maximalFIO3} below 
is straightforward for similar reasons. %On the other hand, we explicitly include $p=\infty$ in the maximal bounds for hypersurfaces in Theorems \ref{thm:maximalhypintro}, \ref{thm:maximalhyp1} and \ref{thm:maximalhyp2}, \textcolor{red}{because there one considers a concrete integral expression for continuous functions, and no limiting procedure \footnote{Is this correct? Does continuity alone suffice to show the equivalence between the original definition and the definition in terms of distributions? If not, a limiting argument may be required.  In this case,  $C(\Rn)\cap H^{s,\infty}_{FIO}$ space  is not a good space, which need to replaced by $\overline{S(\Rn)}$} as in the proof of Theorem \ref{thm:maximalFIO1} is required} (such limiting procedures are problematic for large $p$ in this setting, cf.~Remark \ref{rem:extension}). 
\end{remark}

Next, we consider a variable-coefficient version of Theorem \ref{thm:maximalFIO1}. However, unlike in the latter result, the symbols of the operators under consideration here have compact spatial support.

\begin{theorem}\label{thm:maximalFIO2}
Let $T$ be an operator as in Proposition \ref{prop:FIObounds2}, satisfying \eqref{eq:Gaussmap}. Then, for all $p\in[1,\infty)$ and $s>d(p)+\frac{1}{p}+m$, there exists a $C\geq0$ such that
\begin{equation}\label{eq:maximalFIO2}
\big\|\sup_{t\in \R}|T_{t}f|\big\|_{L^{p}(\Rn)}\leq C\|f\|_{\Hps}
\end{equation}
for all $f\in\Hps$. Hence, for all $t_{0}\in \R$ and for almost all $x\in\Rn$, one has 
\begin{equation}\label{eq:pointwise2}
T_{t}f(x)\to T_{t_{0}}f(x)
\end{equation}
as $t\to t_{0}$.
\end{theorem}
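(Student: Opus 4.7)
The plan is to follow the structure of the proof of Theorem \ref{thm:maximalFIO1}, with Proposition \ref{prop:FIObounds2} replacing Proposition \ref{prop:FIObounds1} at each step, and with a few adjustments to accommodate the variable-coefficient, time-dependent setting.

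First I would observe that, since the symbol $a$ satisfies $a(z,\eta)=0$ for $z=(x,t)\notin K$ with $K\subseteq\R^{n+1}$ compact, there is a bounded closed interval $I\subseteq\R$ such that $T_{t}\equiv 0$ for $t\notin I$. Thus the supremum over $t\in\R$ on the left-hand side of \eqref{eq:maximalFIO2} may be replaced by a supremum over $t\in I$, and I may work with $f\in\Sw(\Rn)$ first, using that $(x,t)\mapsto T_{t}f(x)$ is continuous on $\Rn\times I$ to ensure the supremum is measurable.

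Next I would split off the low-frequency part of $f$. Writing $T_{t}r(D)$ with $r\in C^{\infty}_{c}(\Rn)$ as an integral operator against the kernel $K_{t}(x,y)=(2\pi)^{-n}\int e^{i(\Phi(x,t,\eta)-y\cdot\eta)}a(x,t,\eta)r(\eta)\ud\eta$, integration by parts in $\eta$ (noting that on $\supp(ar)$ the gradient $\partial_{\eta}\Phi$ is locally bounded and $x$ lies in a compact set) yields rapid decay of $K_{t}(x,y)$ in $|x-y|$, so Young's inequality bounds the low-frequency contribution by a multiple of $\|f\|_{\Hps}$ via \eqref{eq:Sobolev}. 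Hence I may assume $\wh{f}$ is supported away from zero.

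The core of the argument is then an almost verbatim repetition of the FTC argument: for fixed $x$ and $t_{0}\in I$,
\begin{equation*}
|T_{t}f(x)|^{p}\leq |T_{t_{0}}f(x)|^{p}+p\Big(\int_{I}|T_{r}f(x)|^{p}\ud r\Big)^{1/p'}\Big(\int_{I}|\partial_{r}T_{r}f(x)|^{p}\ud r\Big)^{1/p},
\end{equation*}
after which I integrate in $x$, apply H\"older, and use: the uniform fixed-time bound \eqref{eq:FIOunif} from Proposition \ref{prop:FIObounds2}\eqref{it:FIObounds21} together with the Sobolev embedding $\HT^{s(p),p}_{FIO}(\Rn)\subseteq L^{p}(\Rn)$ to control the first term by $\|f\|_{\HT^{s(p)+m,p}_{FIO}(\Rn)}$; the local smoothing estimate from Proposition \ref{prop:FIObounds2}\eqref{it:FIObounds22} to control the second term by $\|f\|_{\HT^{d(p)+m+\veps,p}_{FIO}(\Rn)}$; and the same bound applied to $\partial_{r}T_{r}$ in place of $T_{r}$ to handle the third term. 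The point is that $\partial_{r}Tf(z)=\int e^{i\Phi(z,\eta)}\bigl(i(\partial_{t}\Phi)a+\partial_{t}a\bigr)(z,\eta)\wh{f}(\eta)\ud\eta$ is again an operator of the form \eqref{eq:Tstandard} with the same phase $\Phi$ and an amplitude in $S^{m+1}(\R^{n+1}\times\Rn)$ of compact spatial support; the rank conditions on $\Phi$ are unchanged, so Proposition \ref{prop:FIObounds2}\eqref{it:FIObounds22} applies to $\partial_{r}T_{r}$ with $m$ replaced by $m+1$. Combined with a standard Littlewood--Paley decomposition, as in the proof of Theorem \ref{thm:maximalFIO1}, this yields \eqref{eq:maximalFIO2} for $f\in\Sw(\Rn)$ with $s>d(p)+\tfrac{1}{p}+m$.

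The extension from $\Sw(\Rn)$ to general $f\in\Hps$ would proceed exactly as in the second half of the proof of Theorem \ref{thm:maximalFIO1}: choose a sequence $f_{k}\to f$ in $\Hps$, pass to a subsequence for which $Tf_{k}$ converges pointwise a.e.~in $x$ uniformly in $t\in I$, and use Proposition \ref{prop:FIObounds2}\eqref{it:FIObounds21} plus a further subsequence to identify the limit with $T_{t}f(x)$ for a.e.~$x$; choosing representatives appropriately gives the measurable maximal function satisfying \eqref{eq:maximalFIO2}. Finally, since the Schwartz functions lie dense in $\Hps$ for $p<\infty$, the pointwise convergence \eqref{eq:pointwise2} follows by a standard $\eps/3$ argument from \eqref{eq:maximalFIO2} together with the fact that \eqref{eq:pointwise2} is obvious for $f\in\Sw(\Rn)$. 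The main obstacle is verifying that the time derivative $\partial_{r}T_{r}$ still fits the framework of Proposition \ref{prop:FIObounds2} with the appropriate (shifted) order; beyond this the argument is essentially a transcription of the flat-coefficient proof.
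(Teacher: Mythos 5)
Your proposal is correct and follows essentially the same route as the paper, whose proof simply notes that the argument is almost identical to that of Theorem \ref{thm:maximalFIO1}: reduce to Schwartz $f$, treat the low-frequency part $T_{t}r(D)$ as a smoothing operator via Young's inequality and \eqref{eq:Sobolev}, and run the fundamental-theorem-of-calculus/H\"{o}lder argument with Proposition \ref{prop:FIObounds2} in place of Proposition \ref{prop:FIObounds1}, observing as you do that $\partial_{t}T_{t}$ is again of the form \eqref{eq:Tstandard} with the same phase and an amplitude of order $m+1$. The only small caveat is that for $p\in[1,2]$ the ``local smoothing'' input is simply \eqref{eq:FIOunif} plus the Sobolev embedding, since $d(p)=s(p)$ there, exactly as in the proof of Theorem \ref{thm:maximalFIO1}.
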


Although \eqref{eq:maximalFIO2}, unlike \eqref{eq:maximalFIO1}, involves the supremum over all $t\in\R$, it should be noted that $T_{t}=0$ for all $t$ outside of a compact interval, by assumption. Similarly, \eqref{eq:pointwise2} holds for all $t_{0}\in\R$ because $T_{t}$ is an FIO of order $m$ for each $t\in\R$.

By Theorems \ref{thm:sharpFIO1} and \ref{thm:sharpFIO2} and Remark \ref{rem:cinematic}, \eqref{eq:maximalFIO2} is essentially sharp for all $p\in[1,2]\cup[\frac{2(n+1)}{n-1},\infty)$ under mild assumptions on $a$.

\begin{proof}
The proof is almost identical to that of Theorem \ref{thm:maximalFIO1}. Firstly, \eqref{eq:pointwise2} follows from \eqref{eq:maximalFIO2}, and it suffices to establish \eqref{eq:maximalFIO2} for $f\in\Sw(\Rn)$, just as in the proof of Theorem \ref{thm:maximalFIO1}. For such $f$ the left-hand side of \eqref{eq:maximalFIO2} is well-defined. 

Now, to prove \eqref{eq:maximalFIO2}, one first considers the low-frequency component $T_{t}r(D)$ of each $T_{t}$,  for $r\in C^{\infty}_{c}(\Rn)$ satisfying $r(\eta)=1$ if $|\eta|\leq 1/2$, and $r(\eta)=0$ if $|\eta|>1$. Write
\[
T_{t}r(D)f=\int_{\Rn}\int_{\Rn}e^{i(x-y)\cdot\eta}e^{i(\Phi(x,t,\eta)-x\cdot\eta)}a(x,t,\eta)r(\eta)\ud \eta f(y)\ud y,
\]
and note that  $(x,\eta)\mapsto e^{i(\Phi(x,t,\eta)-x\cdot\eta)}a(x,t,\eta)r(\eta)$ satisfies the conditions of Lemma \ref{lem:lowfreq}, uniformly in $t\in I$. Hence one can apply Young's inequality and \eqref{eq:Sobolev} to this term, in the same manner as in the proof of Theorem \ref{thm:maximalFIO1}. 

On the other hand, the arguments from the proof of Theorem \ref{thm:maximalFIO1} and the bounds in Proposition \ref{prop:FIObounds2} can be combined to deal with the high frequencies.
\end{proof}

\begin{remark}\label{rem:nocinematic}
If, in Theorem \ref{thm:maximalFIO1}, one does not require that $\rank\,\partial_{\eta\eta}^{2}\phi(\eta)=n-1$ for all $\eta\in V$, then one still has
\begin{equation}\label{eq:nocinematic1}
\big\|\sup_{t\in I}|e^{it\phi(D)}a(tD)f|\big\|_{L^{p}(\Rn)}\lesssim \|f\|_{\Hps}
\end{equation}
for all $p\in[1,\infty)$, $s>s(p)+\frac{1}{p}+m$ and $f\in\Hps$. This follows from the same arguments, except that one cannot rely on \eqref{eq:FIObounds2} to bound the left-hand sides of \eqref{eq:mainproof1} and \eqref{eq:mainproof2}. Instead, for all $\veps>0$ and $f\in\Sw(\Rn)$ such that $\wh{f}(\xi)=0$ for $|\xi|\leq 1/2$, \eqref{eq:FIObounds1} and \eqref{eq:Sobolev} yield
\[
\Big(\int_{I}\|e^{ir\phi(D)}a(rD)f\|_{L^{p}(\Rn)}^{p}\ud r\Big)^{1/p}\lesssim \|f\|_{\HT^{s(p)+m+\veps,p}_{FIO}(\Rn)}
\]
and
\[
\Big(\int_{I}\|\partial_{r}(e^{ir\phi(D)}a(rD)f)\|_{L^{p}(\Rn)}^{p}\ud r\Big)^{1/p}\lesssim \|f\|_{\HT^{s(p)+m+1+\veps,p}_{FIO}(\Rn)},
\]
after which the proof proceeds as before. The appropriate extension of \eqref{eq:pointwise1} holds as well. Note that, for $1\leq p\leq 2$, \eqref{eq:nocinematic1} coincides with \eqref{eq:maximalFIO1}. In this range \eqref{eq:nocinematic1} is essentially sharp under mild assumptions on $a$, as long as $\phi$ is not identically zero on the support of $a$, by Theorem \ref{thm:sharpFIO2}.

A similar remark applies to Theorem \ref{thm:maximalFIO2}, in the case where \eqref{eq:Gaussmap} does not hold. %%, then one can still use \eqref{eq:FIOunif} to show that 
%\begin{equation}\label{eq:nocinematic2}
%\big\|\sup_{t\in \R}|T_{t}f|\big\|_{L^{p}(\Rn)}\lesssim\|f\|_{\Hps}
%\end{equation}
%for all $s>s(p)+\frac{1}{p}+m$ and $f\in\Hps$. 
\end{remark}

\subsection{Hypersurfaces}\label{subsec:hypersurf}

In this subsection we use Theorems \ref{thm:maximalFIO1} and \ref{thm:maximalFIO2} to prove results about maximal functions associated with hypersurfaces in $\Rn$.

For $\Sigma\subseteq\Rn$ a (smooth) hypersurface, $\psi\in C^{\infty}_{c}(\Rn)$, $f\in C(\Rn)$ and $x\in\Rn$, set
\begin{equation}\label{eq:defA}
\A_{t}f(x):=\int_{\Sigma}f(x-ty)\psi(y)\ud\sigma(y)
\end{equation}
for $t>0$, and
\begin{equation}\label{eq:defM}
\Ma_{\Sigma,\psi}f(x):=\sup_{1\leq t\leq 2}|\A_{t}f(x)|.
\end{equation}
Here and throughout, $\ud\sigma$ is the induced Lebesgue measure on $\Sigma$, defined as in \cite[Section XI.3.1.2]{Stein93} (see also \eqref{eq:defAone}). %Note that the integral in \eqref{eq:defM} equals $f\ast \mu_{t}(x)$, where $\mu_{t}\in\Sw'(\Rn)$ is the dilation by $t$ of the distribution $\psi\ud\sigma$.  %Clearly $M_{\Sigma}$ depends on the choice of $\psi$; this will not be reflected in our notation.
The following result contains Theorem \ref{thm:maximalhypintro} as a special case. By Theorem \ref{thm:sharphyp1}, it is essentially sharp for all $p\in[1,2]\cup [\frac{2(n+1)}{n-1},\infty)$ whenever $\psi$ is not identically zero.

\begin{theorem}\label{thm:maximalhyp1}
Let $\Sigma\subseteq\Rn$ be a hypersurface with non-vanishing Gaussian curvature, and let $\psi\in C^{\infty}_{c}(\Rn)$. Then, for all $p\in[1,\infty)$ and $s>d(p)+\frac{1}{p}-\frac{n-1}{2}$, there exists a $C\geq0$ such that
\begin{equation}\label{eq:maximalhyp1}
\|\Ma_{\Sigma,\psi}f\|_{L^{p}(\Rn)}\leq C\|f\|_{\Hps}
\end{equation}
for all $f\in C(\Rn)\cap \Hps$.
\end{theorem}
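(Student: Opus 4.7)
The strategy is to reduce the bound to Theorem \ref{thm:maximalFIO1} (and, for $p=\infty$, Remark \ref{rem:pinfty}) by writing $\A_{t}$ as a finite sum of half-wave-type Fourier integral operators of order $m=-\frac{n-1}{2}$, plus a smoothing remainder. Indeed, Fourier inversion gives
\[
\A_{t}f(x)=(2\pi)^{-n}\int_{\Rn}e^{ix\cdot\xi}\widehat{\psi\,\ud\sigma}(t\xi)\widehat{f}(\xi)\ud\xi,
\]
so everything comes down to decomposing the surface Fourier transform $\widehat{\psi\,\ud\sigma}$ into oscillatory pieces with $1$-homogeneous phases, in such a way that Theorem \ref{thm:maximalFIO1} applies directly.

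\textbf{Stationary phase decomposition.} First I would use a smooth partition of unity on $\Sigma\cap\supp(\psi)$ subordinate to a finite atlas of coordinate patches on which the Gauss map $\nu:\Sigma\to S^{n-1}$ is a diffeomorphism onto its image; this is possible because $\Sigma$ has non-vanishing Gaussian curvature. Next, fix a finite conic cover $(V_{j})$ of $\Rn\setminus\{0\}$ together with a subordinate conic partition of unity $(\chi_{j})\subseteq C^{\infty}(\Rn\setminus\{0\})$ of degree zero, taken fine enough that for each $\eta\in V_{j}$ there are at most two points $y_{j}^{\pm}(\hat{\eta})\in\Sigma\cap\supp(\psi)$ at which $\pm\hat{\eta}$ is a unit normal. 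The method of stationary phase, applied inside each coordinate patch, then yields a classical symbol expansion
\[
\chi_{j}(\eta)\widehat{\psi\,\ud\sigma}(\eta)=\sum_{\pm}e^{-iy_{j}^{\pm}(\hat{\eta})\cdot\eta}a_{j}^{\pm}(\eta)+r_{j}(\eta),
\]
where $a_{j}^{\pm}\in S^{-(n-1)/2}(\Rn)$ is supported in a conic subset $V_{j}'\subseteq V_{j}$ closed in $\Rn\setminus\{0\}$, and $r_{j}\in\Sw(\Rn)$. Setting $\phi_{j}^{\pm}(\eta):=-y_{j}^{\pm}(\hat{\eta})\cdot\eta$, each $\phi_{j}^{\pm}$ is smooth and positively homogeneous of degree $1$ on $V_{j}$; the non-vanishing Gaussian curvature translates, via the inverse of the Gauss map, into $\rank\,\partial_{\eta\eta}^{2}\phi_{j}^{\pm}(\eta)=n-1$ throughout $V_{j}$. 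This is the one technical ingredient that has to be executed with care: one must ensure both the homogeneity of the phase and the claimed symbol class and conic support of $a_{j}^{\pm}$.

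\textbf{Assembly and closing.} Because $\chi_{j}$ is $0$-homogeneous and $\phi_{j}^{\pm}$ is $1$-homogeneous, $\chi_{j}(t\xi)=\chi_{j}(\xi)$ and $\phi_{j}^{\pm}(t\xi)=t\phi_{j}^{\pm}(\xi)$ for $t>0$. Substituting the decomposition into the Fourier representation of $\A_{t}f$ and summing over $j$ expresses $\A_{t}f$ as a finite sum of operators of the form $e^{it\phi_{j}^{\pm}(D)}a_{j}^{\pm}(tD)f$, plus a remainder $\sum_{j}r_{j}(tD)f$ arising both from $r_{j}$ and from the low-frequency part where the partition of unity is not valid. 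For each principal term, Theorem \ref{thm:maximalFIO1} applied with $m=-\frac{n-1}{2}$, $I=[1,2]$, $V=V_{j}$ and phase $\phi_{j}^{\pm}$ gives exactly the hypothesis $s>d(p)+\tfrac{1}{p}-\tfrac{n-1}{2}$ required, for all $p\in[1,\infty)$; the case $p=\infty$ is obtained identically using Remark \ref{rem:pinfty}. The remainder $r_{j}(tD)$ has a convolution kernel which, uniformly in $t\in[1,2]$, is Schwartz, so the associated maximal function is dominated pointwise by convolution with an integrable radially-decreasing function, and the required $L^{p}$ bound in terms of $\|f\|_{\Hps}$ follows from Young's inequality and \eqref{eq:Sobolev}. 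Finally, for $f\in C(\Rn)\cap\Hps$ the map $(x,t)\mapsto\A_{t}f(x)$ is jointly continuous, so $\Ma_{\Sigma,\psi}f$ is lower semicontinuous and the left-hand side of \eqref{eq:maximalhyp1} is unambiguously defined.
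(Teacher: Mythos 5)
Your proposal is correct and follows essentially the same route as the paper: the paper also writes $\A_{t}f=b(tD)f$ with $b=\wh{\psi\ud\sigma}$ and invokes the stationary-phase decomposition of $b$ (citing \cite[Section VIII.5.7]{Stein93}, which is exactly the localization on $\Sigma$ plus conic frequency decomposition you sketch) into pieces $e^{i\phi}a$ with $\phi$ homogeneous of degree one, $\rank\,\partial^{2}_{\eta\eta}\phi=n-1$ and $a\in S^{-(n-1)/2}(\Rn)$, plus a smoothing error, and then applies Theorem \ref{thm:maximalFIO1} and Remark \ref{rem:pinfty}. The one step to tighten is your treatment of the remainder: since $s$ may lie below $s(p)$, Young's inequality applied directly to $f$ would require $\|f\|_{L^{p}(\Rn)}\lesssim\|f\|_{\Hps}$, which fails in that range, so you must first spend the smoothing of $r_{j}$ (e.g.\ factor $r_{j}(tD)=\rho_{t}(D)\lb D\rb^{s-s(p)}$ with kernels of $\rho_{t}(D)$ uniformly integrable for $t\in[1,2]$) before invoking Young and \eqref{eq:Sobolev} --- this is precisely how the paper handles its low-frequency term, and its error term $e(tD)$ via Remark \ref{rem:nocinematic} and Proposition \ref{prop:FIObounds1} with $\phi\equiv0$ and a suitably negative order.
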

In \eqref{eq:defM}, one could replace the supremum over $[1,2]$ by the supremum over an arbitrary compact interval $I\subseteq (0,\infty)$, as follows from the proof. 

\begin{proof}
Fix $f\in C(\Rn)\cap \Hps$. Then the left-hand side of \eqref{eq:maximalhyp1} is well defined. Moreover, by integration $\A_{t}f$ defines a distribution, i.e.~a functional on $C^{\infty}_{c}(\Rn)$, for every $t\in[1,2]$. We claim that $\A_{t}f=b(tD)f$, where $b:=\wh{\psi\ud\sigma}\in C^{\infty}(\Rn)\cap L^{\infty}(\Rn)$ and $b(tD)f$ is initially defined as a distribution as well.

By dilation, it suffices to prove this claim for $t=1$. By definition, one then has
\begin{equation}\label{eq:defAone}
\A_{1}f(x)=\lim_{\veps\to0}\frac{1}{2\veps}\int_{\Rn}f(y)\ind_{S_{\veps}}(x-y)\psi(x-y)\ud y
\end{equation}
for all $x\in\Rn$, where $S_{\veps}:=\{y\in\Rn\mid \inf_{z\in\Sigma}|z-y|<\veps\}$. Hence, if we set $\widetilde{g}(x):=g(-x)$ for $g\in C^{\infty}_{c}(\Rn)$, then 
\begin{align*}
\int_{\Rn}\A_{1}f(x)g(x)\ud x&=\int_{\Rn}\lim_{k\to\infty}\frac{k}{2}\int_{\Rn}f(y)\ind_{S_{1/k}}(x-y)\psi(x-y)g(x)\ud y\ud x\\
&=\lim_{k\to\infty}\frac{k}{2}\int_{\Rn}f(y)\int_{\Rn}\ind_{S_{1/k}}(x-y)\psi(x-y)g(x)\ud x\ud y\\
&=\int_{\Rn}f(y)\lim_{k\to\infty}\frac{k}{2}\int_{\Rn}\ind_{S_{1/k}}(x-y)\psi(x-y)g(x)\ud x\ud y\\
&=\int_{\Rn}f(-y)b(D)\widetilde{g}(y)\ud y,
\end{align*}
and the latter is the adjoint action of the distribution $b(D)f$ on $g$. Note that, in this chain of identities, we used both the dominated convergence theorem and Fubini's theorem. This is in turn allowed because $f$ is locally bounded and, given that $\psi$ and $g$ are compactly supported, the variables $x-y$ and $x$ are restricted to compact sets, meaning that $y$ is as well. This proves the claim.

% Hence $\A_{1}f$ agrees with $b(D)f$ as a functional on $C^{\infty}_{c}(\Rn)$, and by density of the latter in $\Sw(\Rn)$, the tempered distributions $\A_{1}f$ and $b(D)f$ coincide.}

Now, we will show that $b(tD)f\in L^{p}(\Rn)$ for every $t\in[1,2]$, and that 
\begin{equation}\label{eq:btDestimate}
\big\|\sup_{1\leq t\leq 2}|b(tD)f|\big\|_{L^{p}(\Rn)}\lesssim \|f\|_{\Hps}. 
\end{equation}
It then follows that $\A_{t}f\in L^{p}(\Rn)$ as well, and that \eqref{eq:maximalhyp1} holds, as required.

After decomposing the support of $\psi$ into finitely many pieces and changing coordinates, we may suppose that 
\begin{equation}\label{eq:bdecomp}
b(\xi)=a(\xi)e^{i\phi(\xi)}+e(\xi)
\end{equation}
for all $|\xi|\geq1$ (see e.g.~\cite[Section VIII.5.7]{Stein93} or \cite[Theorem 1.2.1]{Sogge17}). Here $\phi\in C^{\infty}(V)$ is real-valued and positively homogeneous of degree $1$, defined on an open conic subset $V\subseteq\Rn\setminus\{0\}$, and $\rank\,\partial_{\xi\xi}^{2}\phi(\xi)=n-1$ for all $\xi\in V$. Also, $a\in S^{-(n-1)/2}(\Rn)$ satisfies $\supp(a)\subseteq V'\cap\{\xi\in\Rn\mid |\xi|\geq 1/2\}$ for some conic $V'\subseteq V$ which is closed in $\Rn\setminus\{0\}$, and $e\in S^{m}(\Rn)$ for all $m\in\R$. In fact, given that the low-frequency component of $b$ is itself an element of $S^{m}(\Rn)$ for all $m\in\R$, \eqref{eq:bdecomp} holds for all $\xi\in\Rn$.

By Proposition \ref{prop:FIObounds1} and \eqref{eq:bdecomp}, $e^{it\phi(D)}a(tD)f$, $e(tD)f$ and $b(tD)f$ are all elements of $L^{p}(\Rn)$, for every $t\in[1,2]$. Moreover,
\[
\sup_{1\leq t\leq 2}|b(tD)f(x)|\leq \sup_{1\leq t\leq 2}\left(|e^{it\phi(D)}a(tD)f(x)|+|e(tD)f(x)|\right)
\]
for almost all $x\in\Rn$. Next, for each $s>d(p)+\frac{1}{p}-\frac{n-1}{2}$, Theorem \ref{thm:maximalFIO1} yields
\[
\big\|\sup_{1\leq t\leq 2}|e^{it\phi(D)}a(tD)f|\big\|_{L^{p}(\Rn)}\lesssim\|f\|_{\Hps}.
\]
On the other hand, by \eqref{eq:nocinematic1} (with $\phi\equiv 0$ and $m=d(p)-s(p)-\frac{n-1}{2}$), one has
\[
\big\|\sup_{1\leq t\leq 2}|e(tD)f|\big\|_{L^{p}(\Rn)}\lesssim\|f\|_{\Hps}
\]
as well, thereby concluding the proof. 
\end{proof}

\begin{remark}\label{rem:extension}
It is natural to ask whether one can extend \eqref{eq:maximalhyp1} to more general $f\in\Hps$. Here one approach is to recall that, if $f\in C(\Rn)\cap \Hps$, then $\A_{t}f=b(tD)f$ as distributions, where $b=\wh{\psi\ud\sigma}$. Moreover, %both $\A_{t}$ and $b(tD)$ act continuously on $\Sw(\Rn)$, and thus also on $\Sw'(\Rn)$. Given that $b(tD)f\in L^{p}(\Rn)$ if $f\in\Hps$, as follows from Proposition \ref{prop:FIObounds1} and
as in the proof of Theorem \ref{thm:maximalhyp1} one can show that, by identifying $\A_{t}f$ and $b(tD)f$, one obtains an extension of \eqref{eq:defM} to general $f\in\Hps$ which makes sense pointwise almost everywhere. Of course, there is a subtlety here regarding the choice of a pointwise representative of $b(tD)f\in L^{p}(\Rn)$ for different values of $t\in[1,2]$, but this can be dealt with in the exact same manner as in the proof of Theorem \ref{thm:maximalFIO1}. More precisely, one can choose these representatives in such a way that $x\mapsto \sup_{1\leq t\leq 2}|b(tD)f(x)|$ is measurable. By approximation, \eqref{eq:maximalhyp1} then extends to all $f\in\Hps$. % $f\in\overline{\Sw(\Rn)}\subseteq\Hps$.

%In fact, we could have used such an approximation procedure in the proof of Theorem \ref{thm:maximalhyp1} as well, in which case we could have made use of the fact that the identity $\A_{t}f=b(tD)f$ is trivial for $f\in\Sw(\Rn)$. However, we have chosen to highlight the difference between the settings of Theorems \ref{thm:maximalhyp1} and \ref{thm:maximalFIO1}, namely that \eqref{eq:maximalhyp1} concerns an estimate which is a priori well defined, whereas one needs approximation to prove that the left-hand side of \eqref{eq:maximalFIO1} makes sense to begin with.

Now, one might wonder whether the extension of \eqref{eq:maximalhyp1} to general $f\in\Hps$ using $b(tD)$ can again be expressed in terms of integrals as in \eqref{eq:defA}. By the reasoning in \cite[Section XI.3.5]{Stein93}, this is indeed the case if $p>\frac{n}{n-1}$ and $f\in L^{p}(\Rn)$, given that then $L^{p}(\Rn)\subseteq\Hps$ continuously for $s\in(d(p)+\frac{1}{p}-\frac{n-1}{2},-s(p)]$. %, or if $\Hps\subseteq L^{p}(\Rn)$, i.e.~if $s\geq s(p)$. 
On the other hand, if $p>\frac{2}{n-1}$ and $s\in (d(p)+\frac{1}{p}-\frac{n-1}{2},s(p))$, then $\Hps$ is not a subset of $L^{p}(\Rn)$, by the sharpness of the embeddings in \eqref{eq:Sobolev}. In fact, in this case there exist $f\in \Hps$ which are not contained in $L^{p}_{\loc}(\Rn)$; one such example can essentially be found in \cite[Section IX.6.13]{Stein93}. For such $f$ it is less clear how to make sense of the integral in \eqref{eq:defA}. 

Similar comments apply to Theorem \ref{thm:maximalhyp2} and Proposition \ref{prop:geodesicmax} below.
\end{remark}

Next, we consider a version of Theorem \ref{thm:maximalhyp1} where the integral in \eqref{eq:defA} is taken over a hypersurface $\Sigma_{x,t}$ which depends smoothly on $x$ and $t$. More precisely, for $\Psi\in C^{\infty}(\R^{n+1}\times\Rn)$, $x\in\Rn$ and $t\in\R$, set 
\begin{equation}\label{eq:hypvar}
\Sigma_{x,t}:=\{y\in\Rn\mid \Psi_{t}(x,y)=0\}
\end{equation}
where $\Psi_{t}(x,y):=\Psi(x,t,y)$. We will assume that the following two conditions hold:
\begin{enumerate}
\item\label{it:rotational} For all $(x,t)\in\R^{n+1}$ and $y\in\Sigma_{x,t}$, one has $\partial_{x}\Psi_{t}(x,y)\neq 0$, $\partial_{y}\Psi_{t}(x,y)\neq 0$, and
\begin{equation}\label{eq:rotation}
\det\begin{pmatrix}
0&\partial_{y}\Psi_{t}(x,y)\\
\partial_{x}\Psi_{t}(x,y)&\partial_{xy}^{2}\Psi_{t}(x,y)
\end{pmatrix}
\neq0.
\end{equation}
Here we interpret $\partial_{y}\Psi_{t}$ as a row vector and $\partial_{x}\Psi_{t}$ as a column vector.  
\item\label{it:cinematic} For all $(x,t)\in\R^{n+1}$, the conical hypersurface
\begin{equation}\label{eq:cinematic}
\{(\theta\partial_{x}\Psi_{t}(x,y),\theta\partial_{t}\Psi_{t}(x,y))\mid y\in \Sigma_{x,t},\theta\neq0\}\subseteq\R^{n+1}
%\{\partial_{x}\Psi_{t}(x,y)\mid y\in \Sigma_{x,t}\}\subseteq\R^{n}
%\{(\partial_{x}\Psi_{t}(x,y),\partial_{t}\Psi_{t}(x,y))\mid y\in \Sigma_{x,t}\}\subseteq\R^{n+1}
\end{equation}
has $n-1$ non-vanishing principal curvatures.
%non-vanishing Gaussian curvature.
\end{enumerate} 

\begin{remark}\label{rem:conditions}
We note that \eqref{it:rotational} is the \emph{rotational curvature} condition from \cite{Phong-Stein86}. It implies that $\Sigma_{x,t}$ is a hypersurface in $\Rn$, and \eqref{eq:rotation} is equivalent to the statement that the Jacobian of $(y,\theta)\mapsto (\theta\partial_{x}\Psi_{t}(x,y),\Psi_{t}(x,y))$ is invertible where $\theta=1$. %Moreover, it implies that $\partial_{x}\Phi_{t}(x,y)$ and $\partial_{y}\Phi_{t}(x,y)$ do not vanish where $\Phi_{t}(x,y)=0$, 
%and the surface in \eqref{eq:cinematic} are 
%has dimension $n-1$. %Also, if \eqref{it:rotational} holds then \eqref{it:cinematic} is equivalent to the statement that the conical hypersurface 
%\[
%\{(\theta\partial_{x}\Phi_{t}(x,y),\theta\partial_{t}\Phi_{t}(x,y))\mid y\in \Sigma_{x,t},\theta\neq0\}\subseteq\R^{n+1}
%\]
%has $n-1$ non-vanishing principal curvatures, and 
On the other hand, \eqref{it:cinematic} says that the conormal bundle 
\[
\{(x,t,\theta\partial_{x}\Psi_{t}(x,y),\theta\partial_{t}\Psi_{t}(x,y),y,-\theta\partial_{y}\Psi_{t}(x,y))\mid x\in\R^{n},t\in\R, y\in \Sigma_{x,t},\theta\neq 0\}
\]
of $\{(x,t,y)\mid (x,t)\in\R^{n+1},y\in\Sigma_{x,t}\}$ satisfies the cinematic curvature condition from \cite{Sogge91} (see also \cite[Section 8.1]{Sogge17} and Remark \ref{rem:cinematic}).
\end{remark}

Let $\ud\sigma_{x,t}$ be Lebesgue measure restricted to $\Sigma_{x,t}$, defined as before, and let $\rho\in C^{\infty}_{c}(\R^{n+1}\times\Rn)$. For $f\in C(\Rn)$, $x\in\R^{n}$ and $t\in\R$, set
\begin{equation}\label{eq:hypvarop}
T_{t}f(x):=\int_{\Sigma_{x,t}}f(y)\rho(x,t,y)\ud\sigma_{x,t}(y).
\end{equation}
%and let
%\[
%M_{\Phi}f(x)=\sup_{1\leq t\leq 2}|Tf(x,t)|.
%\]
We then obtain a version of Theorem \ref{thm:maximalhyp1} for varying hypersurfaces. By Theorem \ref{thm:sharphyp2} (see also Remark \ref{rem:partialdef}), this result is essentially sharp for all $p\in[1,2]\cup [\frac{2(n+1)}{n-1},\infty)$ whenever $\rho$ is not identically zero.

\begin{theorem}\label{thm:maximalhyp2}
Let $\Psi\in C^{\infty}(\R^{n+1}\times\Rn)$ be such that \eqref{it:rotational} and \eqref{it:cinematic} hold, and let $\rho\in C^{\infty}_{c}(\R^{n+1}\times\Rn)$. For $t\in\R$, let $T_{t}$ be as in \eqref{eq:hypvarop}. Then, for all $p\in[1,\infty)$ and $s>d(p)+\frac{1}{p}-\frac{n-1}{2}$, there exists a $C\geq0$ such that
\begin{equation}\label{eq:maximalhyp2}
\big\|\sup_{t\in \R}|T_{t}f|\big\|_{L^{p}(\Rn)}\leq C\|f\|_{\Hps}
\end{equation}
for all $f\in C(\Rn)\cap \Hps$.
\end{theorem}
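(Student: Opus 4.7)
The plan is to reduce to Theorem~\ref{thm:maximalFIO2} by representing the operator $T:f\mapsto Tf$ with $Tf(x,t):=T_tf(x)$, on each microlocal piece, as a Fourier integral operator of the form \eqref{eq:Tstandard} of order $m=-(n-1)/2$ whose phase satisfies both rank conditions of Proposition~\ref{prop:FIObounds2}. For $f\in C(\Rn)\cap\Hps$, the continuity of $(x,t)\mapsto T_tf(x)$ ensures that the left-hand side of \eqref{eq:maximalhyp2} is well defined; the case $p=\infty$ is handled as in Remark~\ref{rem:pinfty}, so we assume $p<\infty$.

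Using the identity $d\sigma_{x,t}(y)=|\partial_y\Phi_t(x,y)|\,\delta(\Phi_t(x,y))\,dy$ (valid by \eqref{it:rotational}) together with the Fourier representations of $\delta$ and of $f$, we write
\[
T_tf(x)=(2\pi)^{-n-1}\iiint e^{i[\theta\Phi(x,t,y)+y\cdot\eta]}\,b(x,t,y)\,\widehat f(\eta)\,d\theta\,dy\,d\eta,
\]
where $b=\rho|\partial_y\Phi_t|\in C^\infty_c(\R^{n+1}\times\Rn)$. A smooth dyadic cutoff splits off the low-$\eta$ contribution, which is a smoothing operator and is handled via Young's inequality and \eqref{eq:Sobolev}. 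For the high-frequency part we introduce a microlocal partition of unity in $\eta/|\eta|$, obtaining finitely many pieces whose amplitudes are supported near points $(x_0,t_0,\eta_0)$ lying on the canonical relation, i.e., with $\eta_0=-\theta_0\partial_y\Phi_t(x_0,y_0)$ for some $y_0\in\Sigma_{x_0,t_0}$ and $\theta_0\neq0$.

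On each such piece, the rotational curvature condition \eqref{it:rotational} allows the implicit function theorem to solve the critical-point equations $\theta\partial_y\Phi_t(x,y)+\eta=0$ and $\Phi_t(x,y)=0$ for $y=y_c(x,t,\eta)$ and $\theta=\theta_c(x,t,\eta)$, positively homogeneous in $\eta$ of degrees $0$ and $1$. Rescaling $\theta\mapsto|\eta|\theta'$ and applying stationary phase in the $(n+1)$-dimensional variable $(y,\theta')$ with large parameter $|\eta|$ (whose non-degenerate Hessian is another manifestation of \eqref{it:rotational}) then reduces each piece to a single-integral Fourier integral operator
\[
T^{(j)}f(x,t)=\int_{\Rn}e^{i\Phi^{(j)}(x,t,\eta)}\,a^{(j)}(x,t,\eta)\,\widehat f(\eta)\,d\eta,
\]
with $\Phi^{(j)}(x,t,\eta)=y_c(x,t,\eta)\cdot\eta$ positively homogeneous of degree $1$ in $\eta$, and $a^{(j)}\in S^{-(n-1)/2}(\R^{n+1}\times\Rn)$ of compact $(x,t)$-support: the extra factor $|\eta|$ from the Jacobian $d\theta=|\eta|\,d\theta'$ is offset by the $|\eta|^{-(n+1)/2}$ loss from stationary phase, yielding net order $-(n-1)/2$. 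A chain-rule computation using the matrix in \eqref{eq:rotation} then gives $\rank\,\partial_{x\eta}^2\Phi^{(j)}=n$, verifying the first rank condition of Proposition~\ref{prop:FIObounds2}. Applying Theorem~\ref{thm:maximalFIO2} with $m=-(n-1)/2$, so that $d(p)+\frac{1}{p}+m=d(p)+\frac{1}{p}-\frac{n-1}{2}$, and summing over $j$ then yields \eqref{eq:maximalhyp2}.

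The main obstacle is the second, curvature-type rank condition \eqref{eq:Gaussmap} for $\Phi^{(j)}$. Geometrically, both \eqref{it:cinematic} and \eqref{eq:Gaussmap} assert that the light cone of the canonical relation of $T$ has $n-1$ non-vanishing principal curvatures, but they are formulated in two different parameterizations of that Lagrangian: \eqref{it:cinematic} uses $(y,\theta)\in\Sigma_{x,t}\times(\R\setminus\{0\})$, while \eqref{eq:Gaussmap} uses $\eta\in\Rn\setminus\{0\}$. Making the translation precise requires differentiating the implicit definitions of $y_c$ and $\theta_c$ twice in $\eta$ and identifying the resulting quadratic form for $\partial_{\eta\eta}^2(\partial_z\Phi^{(j)}\cdot G)$, up to a positive factor, with the second fundamental form of the conic hypersurface in \eqref{eq:cinematic}; this is the standard dictionary between H\"ormander's canonical-relation description of an FIO and Sogge's cinematic-curvature formulation.
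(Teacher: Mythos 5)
Your overall strategy is the paper's: reduce to Theorem~\ref{thm:maximalFIO2} by writing $Tf(x,t)=T_tf(x)$, modulo smoothing terms, as a finite sum of operators of the form \eqref{eq:Tstandard} of order $-(n-1)/2$. But there is a genuine gap at the heart of your reduction. You solve the critical-point equations $\theta\partial_y\Phi_t(x,y)+\eta=0$, $\Phi_t(x,y)=0$ for $(y,\theta)$ in terms of $(x,t,\eta)$ and then apply stationary phase in $(y,\theta)$, asserting that the relevant non-degeneracy is ``another manifestation of \eqref{it:rotational}.'' It is not: the Jacobian of that system in $(y,\theta)$, which is also the Hessian of the phase $\theta\Phi_t(x,y)+y\cdot\eta$ at the critical point, is the bordered matrix
\begin{equation*}
\begin{pmatrix} \theta\,\partial^{2}_{yy}\Phi_{t}(x,y) & \partial_{y}\Phi_{t}(x,y)^{T}\\ \partial_{y}\Phi_{t}(x,y) & 0\end{pmatrix},
\end{equation*}
whose invertibility is equivalent to the non-vanishing of the Gaussian curvature of the slice $\Sigma_{x,t}$ at $y$. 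The rotational curvature condition \eqref{eq:rotation} involves instead $\partial_{x}\Phi_{t}$ and the mixed Hessian $\partial^{2}_{xy}\Phi_{t}$; as Remark~\ref{rem:conditions} notes, it is the invertibility of the Jacobian of $(y,\theta)\mapsto(\theta\partial_{x}\Phi_{t}(x,y),\Phi_{t}(x,y))$, i.e.\ it lets you parametrize the canonical relation by the \emph{output} variables, not by the input frequency $\eta$. Equivalently, your construction requires the projection of the canonical relation onto $(x,t,\eta)$ to be a local diffeomorphism, which is exactly the statement that the Gauss map of $\Sigma_{x,t}$ is non-degenerate --- a hypothesis that is neither assumed in Theorem~\ref{thm:maximalhyp2} nor implied by \eqref{it:rotational} and \eqref{it:cinematic} in the variable-coefficient setting (the two notions coincide only in translation-invariant situations such as Theorem~\ref{thm:maximalhyp1}). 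So, as written, your argument proves the theorem only under the extra assumption that every $\Sigma_{x,t}$ has non-vanishing Gaussian curvature.

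The paper's proof avoids this by invoking the decomposition $T=\sum_{j}T_{j}S_{j}+R$ of \cite[Proposition A.2]{LiRoSoYa24}, in which the $T_{j}$ are of the form \eqref{eq:Tstandard} but one is allowed to precompose with changes of coordinates $S_{j}$ \emph{on the $y$-side}; these coordinate changes are exactly what compensates for the possible degeneracy of the projection onto $(x,t,\eta)$, while (crucially, and as the paper emphasizes) no change of coordinates in $(x,t)$ is performed, since \eqref{eq:maximalhyp2} is not invariant under such changes. To repair your argument you would either need to incorporate such $y$-side coordinate changes (after which $\Hps$-invariance under them lets you conclude as before), or reorganize the oscillatory-integral reduction around the output frequency variable, which is what \eqref{eq:rotation} actually controls. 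Separately, your final identification of \eqref{eq:Gaussmap} with \eqref{it:cinematic} is only sketched; it is indeed the standard dictionary, but in a complete proof it must be carried out for the phases actually produced by your reduction, which is another reason the paper delegates the whole reduction to the cited appendix rather than redoing the stationary-phase computation by hand.
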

\begin{proof}
Again, the left-hand side is well defined for general $f\in C(\Rn)\cap \Hps$. We will reduce to the setting of Theorem \ref{thm:maximalFIO2} by decomposing into finitely many operators and applying changes of coordinates. However, some care has to be taken to deal with the fact that \eqref{eq:maximalhyp2} is not invariant under general changes of coordinates in the $(x,t)$ variable in \eqref{eq:hypvarop}.

Note that
\begin{equation}\label{eq:Lagrangian}
T_{t}f(x)=\frac{1}{2\pi}\int_{\Rn}\int_{\R}e^{i\theta\Psi_{t}(x,y)}\rho_{0}(x,t,y)f(y)\ud\theta\ud y
\end{equation}
for all $f\in C(\Rn)$, $t\in\R$ and $x\in\Rn$, where $\rho_{0}\in C^{\infty}_{c}(\R^{n+1}\times {\R^{n}})$ satisfies $\rho_{0}(x,t,y)=\rho(x,t,y)|\partial_{y}\Psi_{t}(x,y)|$ for $y\in \Sigma_{x,t}$ (see e.g.~\cite[Section XI.3.1.2]{Stein93}). Hence the kernel of $T_{t}$ is a Lagrangian distribution of order $-(n-1)/2$, and $T_{t}$ is a Fourier integral operator of order $-(n-1)/2$ with canonical relation
\[
\Ca_{t}:=\{(x,\theta\partial_{x}\Psi_{t}(x,y),y,-\theta\partial_{y}\Psi_{t}(x,y))\mid x\in\Rn, y\in \Sigma_{x,t}, \theta\neq 0\}.
\]
Condition \eqref{it:rotational} implies that $\Ca_{t}$ is a local canonical graph (see e.g.~\cite[p.~171]{Sogge17}).

Next, set $Tf(x,t):=T_{t}f(x,t)$ for $f\in C(\Rn)$ and $(x,t)\in\R^{n+1}$. Then the canonical relation of $T$ is
\[
\{(x,t,\theta\partial_{x}\Psi_{t}(x,y),\theta\partial_{t}\Psi_{t}(x,y),y,-\theta\partial_{y}\Psi_{t}(x,y))\mid (x,t)\in\R^{n+1}, y\in \Sigma_{x,t}, \theta\neq 0\}.
\] 
As noted in Remark \ref{rem:conditions}, it satisfies the cinematic curvature condition, due to \eqref{it:cinematic}.

Now, by e.g.~\cite[Proposition A.2]{LiRoSoYa24}, there exists a collection $(T_{j})_{j=1}^{l}$ of operators as in Proposition \ref{prop:FIObounds2}, with $m=-(n-1)/2$, changes of coordinates $(S_{j})_{j=1}^{l}$ on $\Rn$, and an operator $R$ with a kernel which is a Schwartz function, such that $T=\sum_{j=1}^{l}T_{j}S_{j}+R$ (that no changes of coordinates on $\R^{n+1}$ are required is noted in \cite[Remark A.3]{LiRoSoYa24}). The smoothing operator $R$ satisfies the required maximal function estimate, since it maps $\Hps$ into $\Sw(\R^{n+1})$ continuously. Moreover, $\Hps$ is invariant under the changes of coordinates $S_{j}$, and each $T_{j}$ satisfies the required maximal function estimate, by Theorem \ref{thm:maximalFIO2}. % and because $T_{t}=0$ for all $t$ outside of a compact interval. 
This concludes the proof.
\end{proof}

\begin{remark}\label{rem:specificsurface}
Clearly, $\Psi$ does not need to be defined on all of $\R^{n+1}\times\Rn$. It suffices for conditions \eqref{it:rotational} and \eqref{it:cinematic} to hold for all $(x,t,y)$ in a neighborhood of $\supp(\rho)$.

Moreover, if $\Psi$ satisfies \eqref{it:rotational} but not \eqref{it:cinematic}, then one still has
\[
\big\|\sup_{t\in \R}|T_{t}f|\big\|_{L^{p}(\Rn)}\lesssim\|f\|_{\Hps}
\]
for all $p\in[1,\infty)$, $s>s(p)+\frac{1}{p}-\frac{n-1}{2}$ and $f\in C(\Rn)\cap \Hps$. This follows as before, if one relies on Remark \ref{rem:nocinematic} instead of Theorem \ref{thm:maximalFIO2}. 

Similarly, suppose that $\Sigma\subseteq \Rn$ is a hypersurface of the form $\Sigma=\{y\in\Rn\mid \phi(y)=1\}$, for some real-valued $\phi\in C^{\infty}(V)$, positively homogeneous of degree $1$ and defined on an open subset $V\subseteq\Rn\setminus\{0\}$. Then, even if one does not assume that $\rank\,\partial_{\xi\xi}^{2}\phi(\xi)=n-1$ for all $\xi\in V$, for each $\psi\in C^{\infty}_{c}(\Rn)$ one still has
\[
\|M_{\Sigma,\psi}f\|_{L^{p}(\Rn)}\lesssim\|f\|_{\Hps}
\]
for all $p\in[1,\infty)$, $s>s(p)+\frac{1}{p}-\frac{n-1}{2}$ and $f\in C(\Rn)\cap\Hps$. This follows as in the proof of Theorem \ref{thm:maximalhyp1}, but relying on \eqref{eq:nocinematic1} instead of Theorem \ref{thm:maximalFIO1}.
%This remark applies for example to $\Psi(x,t,y)=x\cdot y-t$, which satisfies condition \eqref{it:rotational} but not \eqref{it:cinematic}. Here one averages over the hyperplane $\Sigma_{x,t}=\{y\in\Rn\mid y\cdot x=t\}$, and the corresponding FIO is the Radon transform. 
\end{remark}

\subsection{Spherical means of complex order}\label{subsec:complex}

In this subsection we derive from Theorem \ref{thm:maximalFIO1} maximal bounds for the complex spherical means from \cite{Stein76}.

For $\alpha,t>0$, $f\in C(\Rn)$ and $x\in\Rn$, set \begin{equation}\label{eq:defMalpha}
\Ma^{\alpha}_{t}f(x):=\frac{1}{\Gamma(\alpha)}\int_{B_{1}(0)}(1-|y|^{2})^{\alpha-1}f(x-ty)\ud y.
\end{equation}
Here $B_{1}(0)$ is the unit ball in $\Rn$. Note that $\Ma^{\alpha}_{t}f=K_{\alpha,t}\ast f$, where $K_{\alpha,t}(x):=t^{-n}K_{\alpha}(t^{-1}x)$ and $K_{\alpha}(x):=\Gamma(\alpha)^{-1}\max((1-|x|^{2})^{\alpha-1},0)$ for $x\in\Rn$. One has 
\begin{equation}\label{eq:defmalpha}
m_{\alpha}(\xi):=\pi^{-\alpha+1}|\xi|^{-n/2-\alpha+1}J_{n/2+\alpha-1}(2\pi|\xi|)=\wh{K_{\alpha}}(\xi)
\end{equation}
for all $\xi\neq 0$, where $J_{\beta}$ is the Bessel function of the first kind, of order $\beta\in\C$ (see \cite[Theorem IV.4.15]{Stein-Weiss71}). For a general $\alpha\in\C$ one can then also define $m_{\alpha}$ as in \eqref{eq:defmalpha}. In turn, this then leads to the definition of the complex spherical mean $\Ma^{\alpha}_{t}$, by setting 
\begin{equation}\label{eq:complexmean}
\Ma^{\alpha}_{t}f:=m_{\alpha}(tD)f
\end{equation}
for $f\in\Sw(\Rn)$. 

We now obtain a maximal function bound for the complex spherical means. This result is essentially sharp for all $p\in[1,2]\cup [\frac{2(n+1)}{n-1},\infty)$, cf.~Proposition \ref{prop:sharpcomplex}.

\begin{proposition}\label{prop:complexsphere}
Let $\alpha\in\C$. Then, for all $p\in[1,\infty)$ and $s>d(p)+\frac{1}{p}-\frac{n-1}{2}-\Real(\alpha)$, and for each compact interval $I\subseteq(0,\infty)$, there exists a $C\geq0$ such that
\begin{equation}\label{eq:complexsphere}
\big\|\sup_{t\in I}|\Ma^{\alpha}_{t}f|\big\|_{L^{p}(\Rn)}\leq C\|f\|_{\Hps}
\end{equation}
for all $f\in\Hps$.
\end{proposition}
\begin{proof}
Note that, as follows from asymptotics of Bessel functions (see \cite[Chapters 3 and 7]{Watson95}), there exist $a^{0}_{\alpha}\in \cap_{m\in {\mathbb R}}S^m(\Rn)$ and $a^{+}_{\alpha},a^{-}_{\alpha}\in S^{-\frac{n-1}{2}-\Real(\alpha)}(\Rn)$ such that
\begin{equation}\label{eq:malpha}
m_{\alpha}(\xi)=a_{\alpha}^{0}(\xi)+e^{i|\xi|}a^{+}_{\alpha}(\xi)+e^{-i|\xi|}a^{-}_{\alpha}(\xi)
\end{equation}
for all $\xi\in\Rn$. In particular, $m_{\alpha}(tD):\Sw(\Rn)\to\Sw(\Rn)$ continuously for each $t>0$, and therefore $m_{\alpha}(tD):\Sw'(\Rn)\to\Sw'(\Rn)$ continuously as well. Hence $\Ma^{\alpha}_{t}f$ is a priori well defined, as a tempered distribution, for every $f\in\Hps$.

Now, as in the proof of Theorem \ref{thm:maximalFIO1}, if $f\in\Sw(\Rn)$ then
\[
\big\|\sup_{t\in I}|a^{0}_{\alpha}(tD)f|\big\|_{L^{p}(\Rn)}\lesssim \|f\|_{\Hps}
\]
is a well-defined inequality which is valid. Theorem \ref{thm:maximalFIO1} yields the required bounds for the other terms in \eqref{eq:malpha}. Finally, as in the proof of Theorem \ref{thm:maximalFIO1}, by approximation \eqref{eq:complexsphere} extends in a well-defined manner to general $f\in \Hps$.
\end{proof}

\begin{remark}\label{rem:extensioncomplex}
The expression for $\Ma^{\alpha}_{t}f$ in \eqref{eq:defMalpha}, for $\alpha>0$ and $f\in C(\Rn)$, extends to $\alpha\in\C$ with $\Real(\alpha)>0$. Hence, if $f\in C(\Rn)\cap \Hps$, then Proposition \ref{prop:complexsphere} yields maximal function bounds for these integrals. As in Remark \ref{rem:extension}, these estimates can in turn be extended to $f\in L^{\infty}_{\loc}(\Rn)\cap \Hps$.
\end{remark}

\begin{remark}\label{rem:meanexample}
For $\alpha=0$, one recovers the spherical maximal function from \eqref{eq:defMsphere}. On the other hand, for $\alpha=1$, Proposition \ref{prop:complexsphere} and the previous remark yield
\begin{equation}\label{eq:HL}
\Big(\int_{\Rn}\sup_{t\in I}\Big|\fint_{B_{t}(x)}f(y)\ud y\Big|^{p}\ud x\Big)^{1/p}\lesssim \|f\|_{\Hps}
\end{equation}
for all $1\leq p<\infty$, $s>d(p)+\frac{1}{p}-\frac{n+1}{2}$ and $f\in C(\Rn)\cap \Hps$. If, additionally, $f$ is non-negative, then the left-hand side of \eqref{eq:HL} is a version of the (centered) Hardy--Littlewood maximal operator where the radii of balls are restricted to a compact interval in $(0,\infty)$.
\end{remark}

\subsection{Maximal functions on manifolds}\label{subsec:manifold}

In this subsection we derive versions of Theorems \ref{thm:maximalhypintro} and \ref{thm:maximalFIOintro} on compact manifolds. 

Throughout, fix an $n$-dimensional compact Riemannian manifold $(M,g)$ without boundary, and let $\Delta_{g}$ be the associated (non-positive) Laplace--Beltrami operator. Then $\Delta_{g}$ acts on half densities on $M$, as do the operators $e^{it\sqrt{-\Delta_{g}}}$, defined for all $t\in\R$ through the spectral theorem on $L^{2}(M,\Omega_{1/2})$. 

We first consider an analogue of Theorem \ref{thm:maximalFIOintro} on $M$. Due to the fact that $L^{p}(M)$ is a proper subspace of $\HpsM$, this result improves upon \cite[Theorem 46]{BeHiSo21}. 

\begin{proposition}\label{prop:maximalFIO3}
Let $p\in[1,\infty)$ and $s>d(p)+\frac{1}{p}$. Then, for each bounded interval $I\subseteq\R$, there exists a $C\geq0$ such that
\begin{equation}\label{eq:maximalFIO3}
\big\|\sup_{t\in I}|e^{it\sqrt{-\Delta_{g}}}f|\big\|_{L^{p}(M)}\leq C\|f\|_{\HpsM}
\end{equation}
for all $f\in\HpsM$. Hence, for all $t_{0}\in \R$ and for almost all $x\in M$, one has 
\begin{equation}\label{eq:pointwise3}
e^{it\sqrt{-\Delta_{g}}}f(x)\to e^{it_{0}\sqrt{-\Delta_{g}}}f(x)
\end{equation}
as $t\to t_{0}$.
\end{proposition}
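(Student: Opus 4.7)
The plan is to reduce \eqref{eq:maximalFIO3} to the variable-coefficient Euclidean statement in Theorem \ref{thm:maximalFIO2}, by first using the group property of the half-wave propagator to restrict to short time intervals, and then localizing with the partition of unity $(\psi_\ka)_{\ka\in K}$ from \eqref{eq:partitionunit}. Since $e^{it_0\sqrt{-\Delta_g}}$ is a Fourier integral operator of order zero associated with a local canonical graph, by \cite[Theorem 4.16]{LiRoSoYa24} it acts boundedly on $\HpsM$ uniformly for $t_0$ in any compact subset of $\R$. Writing $I$ as a union of finitely many sub-intervals of the form $[t_j-\delta,t_j+\delta]$ and factoring $e^{it\sqrt{-\Delta_g}}=e^{i(t-t_j)\sqrt{-\Delta_g}}\circ e^{it_j\sqrt{-\Delta_g}}$, the problem reduces to proving \eqref{eq:maximalFIO3} for $I=[-\delta,\delta]$ with $\delta>0$ still to be chosen.

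With $\delta$ small enough (in particular, smaller than a fixed fraction of the injectivity radius and of the Lebesgue number of the cover $\{U_\ka\}_{\ka\in K}$), one decomposes $f=\sum_{\ka'\in K}\psi_{\ka'}^{2}f$ and applies the $L^p$-version of \eqref{eq:partitionunit} on the left-hand side of \eqref{eq:maximalFIO3}. Finite propagation speed for the wave equation forces $\psi_\ka e^{it\sqrt{-\Delta_g}}\psi_{\ka'}^{2}\equiv 0$ for $|t|\leq\delta$ whenever the $\delta$-neighborhood of $U_{\ka'}$ does not intersect $U_\ka$. For the finitely many surviving pairs, one may, after possibly refining the cover, assume that $U_\ka\cup U_{\ka'}$ lies in a single coordinate chart and pull the operator back to $\Rn$. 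Modulo a smoothing remainder, treatable via \eqref{eq:SobolevM} and the low-frequency argument from the proof of Theorem \ref{thm:maximalFIO1}, the resulting operator has the form \eqref{eq:Tstandard} with $m=0$, whose phase function is a generating function for the Hamiltonian flow of the principal symbol of $\sqrt{-\Delta_g}$. The eikonal equation $\partial_t\Phi+|\partial_x\Phi|_{g^*}=0$, together with the Riemannian character of $g$, guarantees that $\Phi$ satisfies the non-degeneracy hypotheses of Proposition \ref{prop:FIObounds2} \eqref{it:FIObounds22}; in particular the cinematic-curvature condition \eqref{eq:Gaussmap} reduces to the non-vanishing of the $n-1$ principal curvatures of the cospheres of $g^*$.

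Theorem \ref{thm:maximalFIO2} applied to each local piece, summed over the finitely many relevant pairs $(\ka,\ka')$, combined with the definition \eqref{eq:HpFIOnormM1} and the invariance of $\HpsM$ under multiplication by smooth compactly supported functions (Remark \ref{rem:defmanindependent}), then yields \eqref{eq:maximalFIO3}. The pointwise convergence \eqref{eq:pointwise3} follows via the density argument used at the end of the proof of Theorem \ref{thm:maximalFIO1}: for $f\in\Da(M,\Omega_{1/2})$ the map $t\mapsto e^{it\sqrt{-\Delta_g}}f$ is smooth into $C(M)$, and density of this space in $\HpsM$ for $p<\infty$, together with \eqref{eq:maximalFIO3}, propagates the convergence to arbitrary $f\in\HpsM$. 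The main obstacle is the careful bookkeeping of the local Fourier-integral representation and the verification of \eqref{eq:Gaussmap} in the presence of the various cutoffs; once this is settled, the result is a direct consequence of Theorem \ref{thm:maximalFIO2} and the partition-of-unity structure of $\HpsM$.
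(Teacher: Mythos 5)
Your proposal is correct in outline and ultimately rests on the same key input as the paper: an application of Theorem \ref{thm:maximalFIO2} in local coordinates, with the cinematic curvature condition \eqref{eq:Gaussmap} coming from the non-degeneracy of the cospheres of $g^{*}$. The route to that point is different, though. The paper treats $Tf(x,t):=e^{it\sqrt{-\Delta_{g}}}f(x)$ directly as a \emph{global} Fourier integral operator of order $-1/4$ with canonical relation \eqref{eq:canwave} (via H\"ormander), and then reuses the decomposition from the proof of Theorem \ref{thm:maximalhyp2} (i.e.\ \cite[Proposition A.2]{LiRoSoYa24}) to write it, modulo a smoothing remainder, as a finite sum of operators of the form \eqref{eq:Tstandard} composed with coordinate changes; no short-time reduction or explicit phase is needed. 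You instead reduce to a short time interval by the group law and the invariance of $\HpsM$ under $e^{it_{j}\sqrt{-\Delta_{g}}}$, localize with the partition of unity, and build a short-time parametrix whose phase solves the eikonal equation. Both are viable; your reduction buys an explicit phase (so \eqref{eq:Gaussmap} can be checked by hand), at the cost of constructing or quoting the Lax parametrix and tracking its remainders, whereas the paper's argument only needs the abstract FIO statements.

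One step as written is not correct: exact finite propagation speed fails for the half-wave group. Only $\cos(t\sqrt{-\Delta_{g}})$ and $\sin(t\sqrt{-\Delta_{g}})(-\Delta_{g})^{-1/2}$ are local; $e^{it\sqrt{-\Delta_{g}}}$ involves the non-local operator $\sqrt{-\Delta_{g}}$, so $\psi_{\ka}e^{it\sqrt{-\Delta_{g}}}\psi_{\ka'}^{2}$ is \emph{not} identically zero when the supports are $\delta$-separated. What is true is that the kernel of $e^{it\sqrt{-\Delta_{g}}}$ is smooth off $\{d_{g}(x,y)=|t|\}$, so these off-diagonal pieces are smoothing operators with kernels smooth and uniformly bounded in $(t,x,y)$ for $|t|\leq\delta$; they can then be absorbed into the harmless remainder (handled as in the low-frequency/smoothing part of the proof of Theorem \ref{thm:maximalFIO1}, with the $t$-dependence controlled by this uniformity). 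With that correction, and after inserting a cutoff in $t$ so that each local piece has compactly supported symbol in $(x,t)$ as required in Proposition \ref{prop:FIObounds2} and Theorem \ref{thm:maximalFIO2}, your argument goes through.
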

\begin{proof}
Firstly, \eqref{eq:pointwise3} follows from \eqref{eq:maximalFIO3} as before, by approximating a given $f\in\Hps$ by elements of $\Da(M,\Omega_{1/2})$, for which \eqref{eq:pointwise3} holds for every $x\in M$.

Now, for $f\in\Da(M,\Omega_{1/2})$, $t\in\R$ and $x\in M$, set $Tf(x,t):=e^{it\sqrt{-\Delta_{g}}}f(x)$. By e.g.~\cite[Theorem 29.1.1 and Proposition 29.1.9]{Hormander09}, $T$ is an FIO of order $-1/4$ from $M$ to $M\times \R$, with canonical relation
\begin{equation}\label{eq:canwave}
\Ca=\{(x,t,\xi,\tau,y,\eta)\mid (x,\xi)=\chi_{t}(y,\eta),\tau=|\xi|_{g}\}.
\end{equation}
Here $(\chi_{t})_{t\in \R}$ is the geodesic flow on $T^{*}M$. In particular, each $e^{it\sqrt{-\Delta_{g}}}$ is a Fourier integral operator of order $0$, the canonical relation of which is the graph of $\chi_{t}$. This in turn means that $e^{it\sqrt{-\Delta_{g}}}$ acts continuously on $\Da(M,\Omega_{1/2})$ and $\Da'(M,\Omega_{1/2})$, and $e^{it\sqrt{-\Delta_{g}}}f\in\Da'(M,\Omega_{1/2})$ is a priori well defined for each $f\in\HpsM$. Moreover, if $f\in\Da(M,\Omega_{1/2})$ then the left-hand side of \eqref{eq:maximalFIO3} is well defined. 

Finally, $\Ca$ satisfies the cinematic curvature condition (see e.g.~\cite[Section 8.1]{Sogge17}). Thus, by working in local coordinates, one can argue as in the proof of Theorem \ref{thm:maximalhyp2} to derive the required statement from Theorem \ref{thm:maximalFIO2}. 
\end{proof}

Next, let $\inj(M)>0$ be the injectivity radius of $M$. For $x\in M$ and $t\in(0,\inj(M))$, let $B_{t}(x)\subseteq M$ be the (open) geodesic ball around $x$ of radius $t$, and let $S_{t}(x)$ be its boundary, the geodesic sphere of radius $t$ around $x$. Set
\begin{equation}\label{eq:FIOgeodesic}
\A_{t,g}f(x):=\int_{S_{t}(x)}f(y)\sqrt{dV_{g}}
\end{equation}
for $t\in(0,\inj(M))$, $f\in C(M,\Omega_{1/2})$ and $x\in M$, where we identify $\sqrt{dV_{g}}$ with its restriction to $S_{t}(x)$, defined in an analogous manner as in Section \ref{subsec:hypersurf}. %Here the restriction of the square root $\sqrt{dV_{g}}$ of the Riemannian density to $S_{t}(x)$ is defined as in the Euclidean setting (see e.g.~\cite[Section XI.3.1.2]{Stein93}).

\begin{proposition}\label{prop:geodesicmax}
Let $p\in[1,\infty)$ and $s>d(p)+\frac{1}{p}-\frac{n-1}{2}$. Then, for each compact interval $I\subseteq (0,\inj(M))$, there exists a $C\geq 0$ such that
\[
\big\|\sup_{t\in I}|\A_{t,g}f|\big\|_{L^{p}(M)}\leq C\|f\|_{\HpsM} 
\]
for all $f\in C(M,\Omega_{1/2})\cap \HpsM$.
\end{proposition}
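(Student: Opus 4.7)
The plan is to adapt the strategy of Theorem~\ref{thm:maximalhyp2} by localizing to coordinate charts on $M$. First, refine the cover $(U_\kappa)_{\kappa\in K}$ and the partition of unity $(\psi_\kappa)$ from Section~\ref{subsec:spacesman} so that each $\supp(\psi_\kappa)$ is contained in a geodesic ball $B_{r_\kappa}(p_\kappa)$ with $B_{2r_\kappa}(p_\kappa)\subseteq U_\kappa$ and $2\sup I<r_\kappa<\inj(M)$. Then for every $x\in\supp(\psi_\kappa)$ and $t\in I$, the geodesic sphere $S_t(x)$ lies inside $B_{2r_\kappa}(p_\kappa)\subseteq U_\kappa$, so if $\tilde\psi_\kappa\in\Ccinf(U_\kappa)$ is chosen with $\tilde\psi_\kappa\equiv 1$ on $\bigcup_{x\in\supp(\psi_\kappa),\,t\in I}S_t(x)$, then $\psi_\kappa T_tf=\psi_\kappa T_t(\tilde\psi_\kappa f)$. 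Because $K$ is finite and $\sum_\kappa\psi_\kappa^2=1$, the pointwise bound $1\lesssim\sum_\kappa\psi_\kappa^p$ holds for $p<\infty$ (with the obvious $\ell^\infty$ variant for $p=\infty$), giving
\begin{equation*}
\big\|\sup_{t\in I}|T_tf|\big\|_{L^p(M)}^p\lesssim\sum_\kappa\big\|\sup_{t\in I}|\psi_\kappa T_t(\tilde\psi_\kappa f)|\big\|_{L^p(M)}^p.
\end{equation*}

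Next, I would push each localized operator forward via the chart $\kappa$. The resulting operator on $\Rn$ has the form \eqref{eq:hypvarop}, where $\Sigma_{x,t}$ is the image under $\kappa$ of the geodesic sphere $S_t(\kappa^{-1}(x))$, realized as the zero set of
\begin{equation*}
\Phi(x,t,y):=\tfrac{1}{2}d_g\bigl(\kappa^{-1}(x),\kappa^{-1}(y)\bigr)^{2}-\tfrac{1}{2}t^{2},
\end{equation*}
which is smooth in $(x,t,y)$ on the support of the amplitude because $2\sup I<\inj(M)$. The rotational curvature condition \eqref{it:rotational} holds, being equivalent to the non-degeneracy of the mixed Hessian of the squared geodesic distance within the injectivity radius. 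The cinematic curvature condition \eqref{it:cinematic} holds because the conormal bundle in \eqref{eq:cinematic} coincides, after reparametrizing in $t$, with the $t$-slice of the canonical relation \eqref{eq:canwave} of $e^{it\sqrt{-\Delta_g}}$, and this is classically known to satisfy cinematic curvature; see \cite[Section~8.1]{Sogge17} and the canonical relation appearing in the proof of Proposition~\ref{prop:maximalFIO3}.

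Theorem~\ref{thm:maximalhyp2} then yields
\begin{equation*}
\big\|\sup_{t\in I}|\kappa_*(\psi_\kappa T_t(\tilde\psi_\kappa f))|\big\|_{L^p(\Rn)}\lesssim\|\kappa_*(\tilde\psi_\kappa f)\|_{\Hps}\lesssim\|f\|_{\HpsM},
\end{equation*}
where the last inequality uses the invariance of $\Hps$ under changes of coordinates and multiplication by smooth compactly supported cutoffs, together with the decomposition $\tilde\psi_\kappa f=\sum_{\kappa'}\tilde\psi_\kappa\psi_{\kappa'}^{2}f$ (only finitely many $\kappa'$ contribute) and the definition \eqref{eq:HpFIOnormM1}. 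Summing over $\kappa$ completes the argument. The main conceptual step is the verification of the cinematic curvature for geodesic spheres, but this is a standard consequence of the microlocal geometry of the wave equation on $(M,g)$ and indeed underlies Proposition~\ref{prop:maximalFIO3}; once it is in place, the rest is a routine localization argument reducing to Theorem~\ref{thm:maximalhyp2}.
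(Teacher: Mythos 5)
Your overall route is the same as the paper's own (alternative) argument: localize into coordinate charts, realize the geodesic-sphere averages as operators of the form \eqref{eq:hypvarop} with a defining function built from the geodesic distance, verify the rotational and cinematic curvature conditions \eqref{it:rotational} and \eqref{it:cinematic}, and invoke Theorem \ref{thm:maximalhyp2} (together with Remark \ref{rem:specificsurface}, since your $\Phi$ is only defined near the support of the amplitude); the identification of the cone \eqref{eq:cinematic} with the wave canonical relation \eqref{eq:canwave} is exactly how the paper justifies cinematic curvature. Two points in your write-up need repair, though both are local. First, your chart refinement is stated with the radii the wrong way around: requiring $2\sup I<r_{\ka}$ together with $B_{2r_{\ka}}(p_{\ka})\subseteq U_{\ka}$ can be unsatisfiable, e.g.\ on the round sphere with $\sup I$ close to $\inj(M)$, where $S_{t}(x)$ sits near the antipode of $x$ and no chart contains a geodesic ball of radius $2r_{\ka}>4\sup I$. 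What is actually needed (and what the paper does implicitly by working in a geodesic normal chart centered at a point $x_{0}$) is to take $\supp(\psi_{\ka})\subseteq B_{r_{\ka}}(p_{\ka})$ with $r_{\ka}$ \emph{small}, $r_{\ka}+\sup I<\rho_{\ka}<\inj(M)$, and $U_{\ka}=B_{\rho_{\ka}}(p_{\ka})$ in normal coordinates; then $S_{t}(x)\subseteq U_{\ka}$ for all $x\in\supp(\psi_{\ka})$, $t\in I$, and the rest of your localization goes through. Second, your justification of \eqref{it:rotational} is looser than stated: the Phong--Stein condition \eqref{eq:rotation} is the non-vanishing of a \emph{bordered} determinant, which is not literally "equivalent to the non-degeneracy of the mixed Hessian of $d_{g}^{2}$"; it does hold for geodesic spheres inside the injectivity radius, but this needs either the standard computation or, as the paper does, the observation that in normal coordinates centered at $x_{0}$ one has $\Phi_{t}(x_{0},y)=|x_{0}-y|-t$ exactly, so both curvature conditions hold at $x_{0}$ by the Euclidean computation and persist, by openness and compactness of $I$, for $x$ near $x_{0}$. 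With these fixes your argument is correct and coincides in substance with the paper's proof (which also records a second route via Proposition \ref{prop:maximalFIO3}, treating $T$ directly as an FIO with canonical relation \eqref{eq:canwave}).
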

\begin{proof}
If one sets $Tf(x,t):=\A_{t,g}f(x)$ for $f\in\Da(M,\Omega_{1/2})$ and $(x,t)\in M\times I$, then the canonical relation of $T$ is the union of the relation in \eqref{eq:canwave} and the corresponding one with the $\tau$ coordinate replaced by $-\tau$. Since each $\A_{t,g}$ is an FIO of order $-(n-1)/2$, one can now argue as in the proof of Proposition \ref{prop:maximalFIO3}.

Alternatively, one can reason as follows. It suffices to work in a sufficiently small geodesic normal coordinate neighborhood of a given point $x_{0}\in M$. There each $\A_{t,g}$ is as in \eqref{eq:hypvarop}, with $\Sigma_{x_{0},t}=x_0+tS^{n-1}$ and $\Psi_{t}(x_{0},y)=|x_{0}-y|-t$. Then $\Psi_{t}(x,y)$ is a small perturbation of $\tilde{\Psi}_{t}(x,y):=|x-y|-t$ for $x$ near $x_{0}$. Since $\tilde{\Psi}_{t}$ satisfies conditions \eqref{it:rotational} and \eqref{it:cinematic}, and because these conditions are stable under small perturbations, the required statement follows from Theorem \ref{thm:maximalhyp2}.
\end{proof}

\section{Sharpness}\label{sec:sharpness}

In this section we show that the results from the previous section are essentially sharp for suitable values of $p\in[1,\infty]$. We first consider the maximal function bounds for Fourier integral operators. We then prove sharpness of the maximal function estimates for hypersurfaces, and for complex spherical averages, by reducing to the corresponding statements for Fourier integral operators. Finally, we obtain sharpness of the maximal function bounds on manifolds by working in local coordinates.

\subsection{Fourier integral operators}\label{subsec:sharpFIO}

Throughout this subsection, we consider operators as in \eqref{eq:Tstandard}. More precisely, let $V\subseteq \R^{n+1}\times(\R^{n}\setminus\{0\})$ be an open conic subset, and let $\Phi\in C^{\infty}(V)$ be real-valued and positively homogenous of degree $1$ in the fiber variable. Let $m\in\R$ and $a\in S^{m}(\R^{n+1}\times\Rn)$ be such that $(z,\eta)\in V$ whenever $(z,\eta)\in\supp(a)$ and $\eta\neq0$. Suppose that $\rank\,\partial_{x\eta}^{2}\Phi(z_{0},\eta_{0})=n$ for all $(z_{0},\eta_{0})\in\supp(a)$ with $\eta_{0}\neq 0$, where we write $z=(x,t)\in\Rn\times\R$. Set
\begin{equation}\label{eq:Tstandard2}
Tf(z):=\int_{\Rn}e^{i\Phi(z,\eta)}a(z,\eta)\wh{f}(\eta)\ud\eta
\end{equation}
for $f\in\Sw(\Rn)$ and $z\in\R^{n+1}$, and $T_{t}f(x):=Tf(x,t)$. Then, cf.~\cite{Hormander09}, $T$ is a Fourier integral operator of order $m-1/4$, and each $T_{t}$ is a Fourier integral operator of order $m$ associated with a local canonical graph.

To prove sharpness of the results in Section \ref{subsec:FIOmaximal}, we will rely on the following lemma, an improvement of \cite[Lemma 7.1]{LiRoSoYa24}.

\begin{lemma}\label{lem:flow}
Suppose that $m=0$ and that there exists a non-empty conic $V'\subseteq\Rn\setminus\{0\}$ such that $a(0,\eta)=1$ and $\Phi(0,\eta)=0$
for all $\eta\in V'$. % with $|\eta|\geq 1/2$. Suppose also that $|(1-r)\nu+r\w|> 1/2$ for all $\nu,\w\in V'\cap S^{n-1}$ and $r\in[0,1]$. 
Then there exists a $C\geq0$ such that
\[
%\begin{equation}\label{eq:flow1}
|Th(z)-(2\pi)^{n}h(\partial_{\eta}\Phi(z,\nu))|\leq C\|\wh{h}\|_{L^{1}(\Rn)}|z|(1+\gamma(h,\nu))
%\end{equation}
\]
for all $h\in \Sw(\Rn)$ with $\supp(\wh{h})\subseteq V'$ %\cap \{\eta\in\Rn\mid |\eta|\geq 1/2\}$ 
compact, and all $\nu\in V'\cap S^{n-1}$ and $z\in\R^{n+1}$. Here
\[
\gamma(h,\nu):=\sup\{|\hat{\eta}-\nu|^{2}|\eta|\mid \eta\in\supp(\wh{h})\}.
\]
\end{lemma}
\begin{proof}
The proof follows that of \cite[Lemma 7.1]{LiRoSoYa24}, but ends with an additional step. 

Fix $\nu\in S^{n-1}\cap V'$ and $z\in\R^{n+1}$, and write
\begin{align*}
&Th(z)-(2\pi)^{n}h(\partial_{\eta}\Phi(z,\nu))=\int_{\Rn}\big(e^{i\Phi(z,\eta)}a(z,\eta)-e^{i\partial_{\eta}\Phi(z,\nu)\cdot\eta}\big)\wh{h}(\eta)\ud\eta\\
&=\int_{\Rn}e^{i\Phi(z,\eta)}(a(z,\eta)-1)\wh{h}(\eta)\ud\eta+\int_{\Rn}\big(e^{i\Phi(z,\eta)}-e^{i\partial_{\eta}\Phi(z,\nu)\cdot\eta}\big)\wh{h}(\eta)\ud\eta.
\end{align*}
We will bound each of these terms separately. For the first term, simply note that
\[
\Big|\int_{\Rn}e^{i\Phi(z,\eta)}(a(z,\eta)-1)\wh{h}(\eta)\ud\eta\Big|\lesssim \|\wh{h}\|_{L^{1}(\Rn)}|z|.
\]
Here we used Taylor approximation and the assumptions that $m=0$ and that $a(0,\eta)=1$ for all $\eta\in \supp(\wh{h})$. 

On the other hand, for the second term we use the homogeneity of $\Phi$ to write:
\begin{align*}
&\Big|\int_{\Rn}\big(e^{i\Phi(z,\eta)}-e^{i\partial_{\eta}\Phi(z,\nu)\cdot\eta}\big)\wh{h}(\eta)\ud\eta\Big|\leq \int_{\Rn}\big|e^{i(\Phi(z,\eta)-\partial_{\eta}\Phi(z,\nu)\cdot\eta)}-1\big|\,|\wh{h}(\eta)|\ud\eta\\
&=\int_{\Rn}\big|e^{i(\partial_{\eta}\Phi(z,\hat\eta)-\partial_{\eta}\Phi(z,\nu))\cdot\eta}-1\big|\,|\wh{h}(\eta)|\ud\eta\leq \sqrt{2}\|\wh{h}\|_{L^{1}(\Rn)}\delta(h,\nu,z),
\end{align*}
where
\[
\delta(h,\nu,z):=\sup\{|\eta\cdot(\partial_{\eta}\Phi(z,\nu)-\partial_{\eta}\Phi(z,\hat\eta))|\mid \eta\in\supp(\wh{h})\}.
\]
Here we also used that $|e^{ix}-1|=|\cos(x)-1+i\sin(x)|\leq \sqrt{2}|x|$ for all $x\in\R$. 

It thus remains to show that $\delta(h,\nu,z)\lesssim |z|\gamma(h,\nu)$. To this end, for $\eta\in\supp(\wh{h})$, we apply another Taylor expansion and write $\partial_{\eta}\Phi(z,\nu)-\partial_{\eta}\Phi(z,\hat{\eta})$ as
\[
\partial_{\eta\eta}^{2}\Phi(z,\hat{\eta})(\nu-\hat{\eta})+\sum_{\alpha\in\Z_{+}^{n},|\alpha|=2}\int_{0}^{1}(1-r)\partial_{\eta}^{\alpha}\partial_{\eta}\Phi(z,(1-r)\nu+r\hat{\eta})\ud r\cdot (\nu-\hat{\eta})^{\alpha}.
\]
Moreover, $\partial_{\eta_{i}}\Phi(z,\eta)$ is positively homogeneous of order zero in $\eta$ for every $1\leq i\leq n$, because $\Phi(z,\eta)$ is positively homogeneous of order one in $\eta$. In particular, the radial derivative $(\eta\cdot\partial_{\eta})\partial_{\eta_{i}}\Phi(z,\hat{\eta})$ vanishes for every $1\leq i\leq n$. Hence 
\[
\eta\cdot\big(\partial_{\eta\eta}^{2}\Phi(z,\hat{\eta})(\nu-\hat{\eta})\big)=\big(\eta^{t}\partial_{\eta\eta}^{2}\Phi(z,\hat{\eta})\big)\cdot(\nu-\hat{\eta})=\big((\eta\cdot\partial_{\eta})\partial_{\eta_{i}}\Phi(z,\hat{\eta})\big)_{i=1}^{n}\cdot (\nu-\hat{\eta})=0.
\]
We thus obtain that
\[
|\eta\cdot(\partial_{\eta}\Phi(z,\nu)-\partial_{\eta}\Phi(z,\hat\eta))|\leq |\eta| \sum_{\alpha\in\Z_{+}^{n},|\alpha|=2}\int_{0}^{1}|\partial_{\eta}^{\alpha}\partial_{\eta}\Phi(z,(1-r)\nu+r\hat{\eta})|\ud r |\nu-\hat{\eta}|^{2}.
\]
Finally, note that $\partial_{\eta}^{\alpha}\partial_{\eta}^{2}\Phi(0,(1-r)\nu+r\hat{\eta})=0$ for all $\alpha\in\Z_{+}^{n}$ and $0\leq r\leq 1$, since $\Phi(0,\eta')= 0$ for all $\eta'\in V'$. % with $|\eta'|\geq 1/2$ and because $|(1-r)\nu+r\hat{\eta}|> 1/2$. 
Hence for each $r$ we can use another Taylor approximation, this time with respect to $z$, to obtain
\[
|\eta\cdot(\partial_{\eta}\Phi(z,\nu)-\partial_{\eta}\Phi(z,\hat{\eta}))|\lesssim |z|\,|\hat{\eta}-\nu|^{2}|\eta|.\qedhere
\]
\end{proof}

Now, one clearly has 
\[
\|T_{t_{0}}f\|_{L^{p}(\Rn)}\leq \big\|\sup_{t\in\R}|T_{t}f|\big\|_{L^{p}(\Rn)}
\]
for any $t_{0}\in \R$ and $f\in\Sw(\Rn)$, and $d(p)+\frac{1}{p}=s(p)=\frac{n-1}{2}(\frac{1}{2}-\frac{1}{p})$ for $p\geq \frac{2(n+1)}{n-1}$. Hence the following result shows that, under mild conditions on the symbol of the operator, the maximal function bounds in Theorems \ref{thm:maximalFIO1} and \ref{thm:maximalFIO2}, and in Corollary \ref{cor:pointwiseFIO1} and Remark \ref{rem:pinfty}, are essentially sharp for such $p$. %These conditions are in turn satisfied in Theorem \ref{thm:maximalFIOintro}.

\begin{theorem}\label{thm:sharpFIO1}
Let $p\in[1,\infty]$ and $s\in\R$. Suppose that there exist a $z_{0}=(x_{0},t_{0})\in\R^{n+1}$, a non-empty open conic set $V'\subseteq \Rn\setminus\{0\}$ and $c',C'>0$ such that $|a(z_{0},\eta)|\geq c'|\eta|^{m}$ for all $\eta\in V'$ with $|\eta|\geq C'$. If there exists a $C\geq0$ such that
\begin{equation}\label{eq:sharpFIO1}
\|T_{t_{0}}f\|_{L^{p}(\Rn)}\leq C\|f\|_{\HT^{s,p}_{FIO}(\Rn)}
\end{equation}
for all $f\in\Sw(\Rn)$, then $s\geq \frac{n-1}{2}(\frac{1}{2}-\frac{1}{p})+m$. %In particular, if 
%\begin{equation}\label{eq:sharpFIO2}
%\[
%T:\HT^{s+m}_{FIO}(\Rn)\to \HT^{p}(\Rn),
%\]
%\end{equation}
%then $s\geq s(p)$.   
\end{theorem}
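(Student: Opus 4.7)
The strategy is to test the hypothesis against a family of wave packets $(f_R)_{R\gg 1}\subseteq\Sw(\Rn)$ concentrated near $(z_0,\eta_0)$ in phase space, engineered so that their FIO Hardy space norms saturate the lower Sobolev embedding $L^p(\Rn)\hookrightarrow\HT^{-s(p),p}_{FIO}(\Rn)$ in the limit $R\to\infty$. After shrinking $V'$ if necessary, fix a direction $\eta_0\in V'$ with $|\eta_0|=1$, a bump function $\chi\in C^\infty_c(\Rn)$, a parameter $\alpha\in(3/4,1)$, and the point $y_0:=\partial_\eta\Phi(z_0,\eta_0)$. For $R$ large, set
\[f_R(x):=R^{n\alpha}\chi(R^\alpha(x-y_0))e^{iR\eta_0\cdot x},\qquad x\in\Rn.\]
Then $\hat{f_R}$ is essentially supported in a ball of radius $\eqsim R^\alpha$ around $R\eta_0$, which lies inside $V'$ for $R$ sufficiently large by the openness of $V'$.

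The plan rests on three asymptotic computations as $R\to\infty$. First, a standard physical-space estimate yields $\|f_R\|_{L^p(\Rn)}\eqsim R^{n\alpha(1-1/p)}$. Second, by carefully bounding $\|\varphi_\omega(D)f_R\|_{L^p(\Rn)}^p$ for $\omega$ in the angular support of $\hat{f_R}$ on $S^{n-1}$ (which has surface measure $\eqsim R^{(\alpha-1)(n-1)}$) and integrating, one obtains
\[\|f_R\|_{\HT^p_{FIO}(\Rn)}\eqsim R^{(n-1)(2-p)(\alpha-3/4)/p}\|f_R\|_{L^p(\Rn)}.\]
Third, a stationary-phase analysis of the oscillatory integral defining $T_{t_0}f_R$, using the ellipticity $|a(z_0,\eta)|\gtrsim|\eta|^m$ on $\supp(\hat{f_R})$ and the non-degeneracy $\rank\,\partial_{x\eta}^2\Phi(z_0,\eta_0)=n$, shows that $T_{t_0}f_R$ is again a wave packet of the same shape concentrated near $x_0$, yielding $\|T_{t_0}f_R\|_{L^p(\Rn)}\eqsim R^m\|f_R\|_{L^p(\Rn)}$. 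This step can be implemented via a variant of the flow lemma from \cite{LiRoSoYa24}; the choice $\alpha<1$ ensures that the corresponding error term is of order $R^{n(\alpha-1)}$ relative to the main term, hence negligible.

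Plugging the three estimates into the hypothesis gives
\[R^m\lesssim R^{s+m+(n-1)(2-p)(\alpha-3/4)/p},\]
so $s\geq-(n-1)(2-p)(\alpha-3/4)/p$. Since $(n-1)(2-p)(\alpha-3/4)/p\to -s(p)$ as $\alpha\nearrow 1$ (using $s(p)=(n-1)(p-2)/(4p)$ for $p\geq 2$), we conclude $s\geq s(p)$. The principal obstacle is the second estimate: for $\alpha\in(1/2,1)$ the angular support of $\hat{f_R}$ contains many second-dyadic caps $\supp(\varphi_\omega)$, and one must carefully account for both the overlap of $\supp(\hat{f_R})$ with each such cap and for the wave-packet scaling of the resulting pieces, with the key intermediate estimate being $\|\varphi_\omega(D)f_R\|_{L^p(\Rn)}^p\eqsim R^{p(\alpha+3(n-1)/4)-\alpha-(n-1)/2}$ for $\omega$ in the angular support.
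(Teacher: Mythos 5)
Your first two estimates are plausible (the $\Hp$-norm computation for $f_R$ is consistent with the embeddings \eqref{eq:Sobolev} and interpolates correctly between the single-cap value at $\alpha=1/2$ and the delta-like value $R^{-s(p)}\|f_R\|_{L^p}$ at $\alpha=1$), but the third estimate is where the argument breaks, and it is not a technicality. The hypothesis $\rank\,\partial^2_{x\eta}\Phi(z_0,\eta_0)=n$ only says the canonical relation is a local graph; it does not prevent the spreading of your isotropic packet, which is governed by $\partial^2_{\eta\eta}\Phi$. Take the model case $\Phi(x,t,\eta)=x\cdot\eta+t|\eta|$ with $t_0\neq0$, $m=0$ (exactly the case the theorem is applied to): over the frequency support of $f_R$, a ball of radius $R^{\alpha}$, the phase $t_0|\eta|$ deviates from its linearization by $\eqsim R^{2\alpha-1}\gg1$ when $\alpha>3/4$, so $T_{t_0}f_R$ is not ``a wave packet of the same shape''; it spreads over an $R^{-\alpha}$-neighborhood of a spherical cap of angular width $R^{\alpha-1}$, and (heuristically, by $L^2$-conservation and equidistribution) $\|T_{t_0}f_R\|_{L^p}\eqsim R^{-2s(p)(2\alpha-1)}\|f_R\|_{L^p}$ for $p\geq2$, not $\eqsim\|f_R\|_{L^p}$. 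Plugging this corrected value into your chain, the gains and losses cancel exactly and you only obtain $s\geq 4s(p)(\alpha-\tfrac34)-2s(p)(2\alpha-1)=-s(p)$, which is just the Sobolev embedding. Your appeal to ``a variant of the flow lemma'' does not rescue this: for $f_R$ one has $\gamma(f_R,\eta_0)\eqsim R^{\alpha}$ in the notation of \cite[Lemma 7.1]{LiRoSoYa24}, so that lemma controls $T f_R$ only on $|x-x_0|\lesssim R^{-\alpha}$, and even there only after the phase has been normalized so that $\Phi(z_0,\eta)=0$ on the cone --- a reduction you never perform; the claimed relative error $R^{n(\alpha-1)}$ has no justification.

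The missing idea is precisely this normalization/adaptation to the canonical relation. The paper first reduces, by translation, precomposition with $\lb D\rb^{-m}$, and crucially precomposition with an invertible Fourier integral operator (using the invariance of $\Hps$ under such operators, Proposition \ref{prop:FIObounds1}), to the situation $a(z_0,\eta)=1$, $\Phi(z_0,\eta)=0$ on $V'$; it then tests against a \emph{focusing superposition} $f=\sum_{\nu}f_\nu$ of $\eqsim 2^{k(n-1)/2}$ anisotropic cap-adapted packets ($2^{-k}\times2^{-k/2}$ tubes through $x_0$), uses the flow lemma to get $\Real T_{t_0}f_\nu\gtrsim1$ on a ball of radius $2^{-k}$ for every $\nu$, and computes $\|f\|_{\Hps}$ via \cite[Proposition 4.1]{Rozendaal22b}. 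If you insert the same normalization step into your scheme, your single concentrated packet can in fact be made to work (the flow lemma then gives $|T_{t_0}f_R|\gtrsim R^{n\alpha+m}$ on a ball of radius $\eqsim R^{-\alpha}$, and your exponents close up to give $s\geq s(p)$ as $\alpha\nearrow1$, modulo a genuine proof of your second estimate, which is currently only asserted). As written, however, the third step is false for curved phases, so the proof does not go through for the operators the theorem is actually about.
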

\begin{proof}
The proof is analogous to that of \cite[Theorem 7.3]{LiRoSoYa24}. However, given that the setting under consideration presently is slightly different, and because we will use some of the ideas later on, we provide most of the details.

We first make several reduction steps. By translating and precomposing with $\lb D\rb^{-m}$, we may suppose that $z_{0}=(x_{0},t_{0})=0$ and $m=0$. Then, by adding a harmless smoothing term which modifies the low frequencies of $a$, we may suppose that $|a(0,\eta)|\geq c'$ for all $\eta\in V'$. %$z_{0}=(x_{0},t_{0})=0$, $m=0$ and $C'=0$.  %and adding a harmless smoothing term which modifies the low frequencies of $a$, we may suppose that $z_{0}=(x_{0},t_{0})=0$, $m=0$ and $C'=1/4$. Moreover, we may choose $V'$ to be small enough that $|(1-r)\nu+r\w|\geq 1/2$ holds for all $\nu,\w\in V'\cap S^{n-1}$ and $r\in[0,1]$. 

Next, by shrinking $V'$, %and $c'$, 
we may suppose that the closure of $V'$ in $\R^{n}\setminus\{0\}$ is a strict subset of a closed cone $V_{0}\subseteq\Rn\setminus\{0\}$ such that $|a(0,\eta)|\geq c'$ for all $\eta\in V_{0}$. % with $|\eta|\geq 1/4$. %, for some $C''<1/2$. 
Let $\rho\in C^{\infty}(\R^{n})$ be such that $\supp(\rho)\subseteq V_{0}$ %\cap\{\eta\in\Rn\mid |\eta|\geq 1/4\}$, 
and $\rho(\eta)=1$ for all $\eta\in V'$, %\cap \{\eta\in\Rn\mid |\eta|\geq1/2\}$. 
and set
\[
\theta(\eta):=e^{-i\Phi(0,\eta)}\frac{\rho(\eta)}{a(0,\eta)}
\]
for $\eta\in\Rn$. By Proposition \ref{prop:FIObounds1}, $\theta(D)$ is bounded on $\Hps$, and \eqref{eq:sharpFIO1} implies that $\|T_{0}\theta(D)f\|_{L^{p}(\Rn)}\lesssim\|f\|_{\HT^{s,p}_{FIO}(\Rn)}$ for all $f\in\Sw(\Rn)$. Moreover, $T\theta(D)$ is an operator as in \eqref{eq:Tstandard2}, with phase function $\wt{\Phi}$ and symbol $\wt{a}$ satisfying $\wt{\Phi}(0,\eta)=0$ and $\wt{a}(0,\eta)=1$ for $\eta\in V'$. % and $|\eta|\geq 1/2$. 
By replacing $T$ by $T\theta(D)$, we may thus suppose in the remainder that $\Phi(0,\eta)=0$ and $a(0,\eta)=1$ for all $\eta\in V'$. % with $|\eta|\geq 1/2$.

After these reduction steps, we get to the heart of the proof. For each $k\in\Z_{+}$, let $\Theta_{k}\subseteq S^{n-1}$ be a maximal collection of unit vectors such that $|\nu-\nu'|\geq 2^{-k/2}$ for all $\nu,\nu'\in \Theta_{k}$ with $\nu\neq \nu'$. Let $V''\subseteq V'$ be a non-empty cone which is closed in $V'$, and fix a $k\in\Z_{+}$ such that $V_{k}:=\Theta_{k}\cap V''$ has approximately $2^{k(n-1)/2}$ elements. Note that this is true for all large $k$, since $V''$ has a nonzero aperture. %. %Note that $\Theta_{k}$ has approximately $2^{k(n-1)/2}$ elements. 
%L
Next, let $(\chi_{\nu})_{\nu\in\Theta_{k}}\subseteq C^{\infty}(\Rn\setminus\{0\})$ be an associated partition of unity. In particular, each $\chi_{\nu}$ is positively homogeneous of degree $0$ and satisfies $0\leq \chi_{\nu}\leq 1$,
\[
\supp(\chi_{\nu})\subseteq\{\eta\in\Rn\setminus\{0\}\mid |\hat{\eta}-\nu|\leq 2^{-k/2+1}\},
\]
and $\sum_{\nu\in \Theta_{k}}\chi_{\nu}(\eta)=1$ for all $\eta\neq0$. We may also suppose that $\chi_{\nu}(\eta)=1$ whenever $|\hat{\eta}-\nu|\leq 2^{-k/2-1}$. 
Finally, fix a $\psi\in \Sw(\Rn)$ such that $\wh{\psi}$ is non-negative and not identically zero, %$\wh{\psi}(\xi)>0$ for all $\xi\in\Rn$ with $|\xi|\leq c/2$, 
and such that $\wh{\psi}(\eta)=0$ if $|\eta|>c$, for some small constant $c>0$.

Now, for $\nu\in V_{k}$ and $x\in\Rn$, set
\begin{equation}\label{eq:fnudef}
f_{\nu}(x):=e^{i2^{k}\nu\cdot x}\psi(2^{k}(\nu\cdot x)\nu+2^{k/2}\Pi_{\nu}^{\perp}x),
\end{equation}
where $\Pi_{\nu}^{\perp}$ is the orthogonal projection onto the complement of the span of $\nu$. Since $V''$ is closed in $V'$, by choosing $c$ sufficiently small, we may suppose that
\[
%\begin{equation}\label{eq:fnusupport}
\supp(\wh{f_{\nu}})\subseteq V'\cap \{\eta\in\Rn\mid 2^{k-1}\leq |\eta|\leq 2^{k+1},|\hat{\eta}-\nu|\leq 2^{-k/2-1}\}
%\end{equation}
\]
for each $\nu$. In particular, $\chi_{\nu}(D)f_{\nu}=f_{\nu}$. Note also that $\|\wh{f_{\nu}}\|_{L^{1}(\Rn)}\eqsim 1$ and $\|f_{\nu}\|_{L^{p}(\Rn)}\eqsim 2^{-k\frac{n+1}{2p}}$, for implicit constants independent of $\nu$ and $k$. Set $f:=\sum_{\nu\in V_{k}}f_{\nu}\in\Sw(\Rn)$. Then $\supp(\wh{f}\,)\subseteq V'\cap \{\xi\in\Rn\mid 2^{k-1}\leq |\eta|\leq 2^{k+1}\}$ and 
\begin{equation}\label{eq:fnorm}
\|f\|_{\Hps}\eqsim 2^{k(s+\frac{n-1}{2}(\frac{1}{2}-\frac{1}{p}))}\Big(\sum_{\nu\in\Theta_{k}}\|f_{\nu}\|_{L^{p}(\Rn)}^{p}\Big)^{1/p}\eqsim 2^{k(s+\frac{n-1}{2}(\frac{1}{2}-\frac{1}{p})-\frac{1}{p})},
\end{equation}
with the obvious modification for $p=\infty$, as follows from \cite[Proposition 4.1]{Rozendaal22b}. 

Next, note that $\Phi$ and all of its derivatives are bounded on compact subsets of $\R^{n+1}\times(\Rn\setminus\{0\})$. Hence $z\mapsto \partial_{\eta}\Phi(z,\nu)$ is a locally Lipschitz flow for each $\nu\in V_{k}$, with Lipschitz constants independent of $\nu$. Moreover, by assumption, $\partial_{\eta}\Phi(0,\nu)=0$ and $f_{\nu}(0)=\|\wh{\psi}\|_{L^{1}(\Rn)}>0$ for each $\nu$. It thus follows that $\Real(f_{\nu}(\partial_{\eta}\Phi(x,0,\nu)))\gtrsim 1$ whenever $|x|\lesssim 2^{-k}$ for some implicit constant independent of $k$ and $\nu$. On the other hand, $|\hat{\eta}-\nu|^{2}|\eta|\lesssim 1$ for all $\eta\in\supp(\wh{f_{\nu}}\,)$. Hence Lemma \ref{lem:flow} yields
%combine the assumptions made at the start of the proof with \cite[Lemma 7.1]{LiRoSoYa24} to obtain
\[
|T_{0}f_{\nu}(x)-(2\pi)^{n}f_{\nu}(\partial_{\eta}\Phi(x,0,\nu))|\lesssim |x|
\]
for all $x\in\Rn$. We thus see that
\[
\Real T_{0}f_{\nu}(x)\geq (2\pi)^{n}\Real f_{\nu}(\partial_{\eta}\Phi(x,0,\nu))-|T_{0}f_{\nu}(x)-(2\pi)^{n}f_{\nu}(\partial_{\eta}\Phi(x,0,\nu))|\gtrsim 1
\]
if $|x|\lesssim 2^{-k}$. Finally, we can combine \eqref{eq:sharpFIO1} and \eqref{eq:fnorm}:
\begin{align*}
2^{-k\frac{n}{p}}2^{k\frac{n-1}{2}}&\lesssim \Big(\int_{\R^{n}}\Big|\sum_{\nu\in V_{k}}T_{0}f_{\nu}(x)\Big|^{p}\ud x\Big)^{1/p}=\|T_{0}f\|_{L^{p}(\R^{n})}\lesssim \|f\|_{\HT^{s,p}_{FIO}(\Rn)}\\
&\eqsim 2^{k(s+\frac{n-1}{2}(\frac{1}{2}-\frac{1}{p})-\frac{1}{p})},
\end{align*}
with the obvious modification for $p=\infty$. This implies the required statement.
\end{proof}

\begin{remark}\label{rem:singleFIO}
Note that Theorem \ref{thm:maximalFIO1} is in fact a statement about a single Fourier integral operator acting on function spaces over $\Rn$, and the implicit time parameter plays no role in the proof. Combined with the Sobolev embeddings in \eqref{eq:Sobolev}, the theorem shows that the bounds in Proposition \ref{prop:FIObounds1} \eqref{it:FIObounds11} and Proposition \ref{prop:FIObounds2} \eqref{it:FIObounds21} are sharp under very mild assumptions, for $p\in[2,\infty)$. %This in fact holds for all $p\in[1,\infty]$. 
It was already shown in \cite{Rozendaal22b,LiRoSoYa24} that Proposition \ref{prop:FIObounds1} \eqref{it:FIObounds12} and Proposition \ref{prop:FIObounds2} \eqref{it:FIObounds22} are sharp under the same assumptions. 
\end{remark}

\begin{remark}\label{rem:SSS}
The conclusion of Theorem \ref{thm:sharpFIO1} essentially follows from \cite{SeSoSt91} (see also \cite[pp.~185-186]{Sogge17}) if the projection of the canonical relation
\[
\{(x,\partial_{x}\Phi(x,t_{0},\eta)),(\partial_{\eta}\Phi(x,t_{0},\eta),\eta)\mid (x,\eta)\in V\}\subseteq T^{*}(\Rn)\times T^{*}(\Rn)
\]
of $T_{t_{0}}$ onto the base space $\Rn\times \Rn$ has full rank at $x_{0}$, and in that case we may replace \eqref{eq:sharpFIO1} by the weaker assumption that 
\begin{equation}\label{eq:sharpclassical}
T_{t_{0}}:\HT^{s+s(p),p}(\Rn)\to L^{p}(\Rn)
\end{equation}
is bounded. This assumption on the rank of the projection is satisfied in the setting of Theorem \ref{thm:maximalFIOintro} and \eqref{eq:maximalFIOintro3}, and, combined with some of the arguments later on in this section, also in \eqref{eq:localimprove} and in the results in Sections \ref{subsec:hypersurf}, \ref{subsec:complex} and \ref{subsec:manifold}. On the other hand, \eqref{eq:sharpclassical} definitely does not imply that $s\geq s(p)$ for general $\Phi$ as in Theorem \ref{thm:sharpFIO1}, since the latter result also applies when, for example, $T_{t_{0}}$ is the identity operator.
\end{remark}

Next, we consider sharpness of the results in Section \ref{subsec:FIOmaximal} for $p\in[1,2]$. Here one cannot expect to rely on an approach as general as in Theorem \ref{thm:sharpFIO1}. Indeed, let $T_{t_{0}}$ be a Fourier integral operator of order $m$ on $\Rn$, associated with a local canonical graph and with a principal symbol which is not identically zero. Then one may set $Tf(x,t):=\chi(t)T_{t_{0}}f(x)$ for $(x,t)\in\R^{n+1}$, where $\chi\in C^{\infty}_{c}(\R)$ is not identically zero. Now $T$ satisfies the conditions of Theorem \ref{thm:sharpFIO1}, and 
\[
\big\|\sup_{t\in\R}|T_{t}f|\big\|_{L^{p}(\Rn)}\eqsim \|T_{t_{0}}f\|_{L^{p}(\Rn)}.
\]
However, an inequality as in \eqref{eq:sharpFIO1} merely implies that $s\geq s(p)$, whereas $s(p)<d(p)+\frac{1}{p}$ for all $p\in[1,2]$. Instead, one has to take the dependence of the phase on the variable $t$ into account. 

We will now show that nontrivial dependence of the phase on $t$ is in fact the only additional assumption required to prove the desired sharpness bound.  The following result implies in particular that, under mild conditions on the symbol of the operator, the maximal function bounds in Theorems \ref{thm:maximalFIO1} and \ref{thm:maximalFIO2}, as well as in Corollary \ref{cor:pointwiseFIO1}, are essentially sharp for $1\leq p\leq 2$. %This condition is in turn satisfied in Theorem \ref{thm:maximalFIOintro}.

\begin{theorem}\label{thm:sharpFIO2}
Let $p\in[1,\infty]$ and $s\in \R$. Suppose that there exist a $z_{0}\in\R^{n+1}$, a non-empty open conic set $V'\subseteq \Rn\setminus\{0\}$ and $c',C'>0$ such that $|a(z_{0},\eta)|\geq c'|\eta|^{m}$ and 
$\partial_{t}\Phi(x_{0},t,\eta)|_{t=t_{0}}\neq 0$ for all $\eta\in V'$ with $|\eta|\geq C'$. If there exists a $C\geq0$ such that
\begin{equation}\label{eq:sharpFIO2}
\big\|\sup_{t\in \R}|T_{t}f|\big\|_{L^{p}(\Rn)}\leq C\|f\|_{\HT^{s,p}_{FIO}(\Rn)}
\end{equation}
for all $f\in\Sw(\Rn)$, then $s\geq \frac{n-1}{2}(\frac{1}{p}-\frac{1}{2})+\frac{1}{p}+m$.
\end{theorem}
\begin{proof}
In the same way as in the proof of Theorem \ref{thm:sharpFIO1}, one may reduce to the case where $z_{0}=0$, $m=0$, and %where $|(1-r)\nu+r\w|\geq1/2$ holds for all $\nu,\w\in V'\cap S^{n-1}$ and $r\in[0,1]$. By the same arguments as before, we may suppose that 
$\Phi(0,\eta)=0$ and $a(0,\eta)=1$ for all $\eta\in V'$. 
Fix a $\nu\in V'$. Then, by precomposing with the change of coordinates $y\mapsto \partial_{\eta x}^{2}\Phi(0,\nu)^{-1}y$, which is a diffeomorphism on $\Rn$ by the assumptions on $\Phi$ and therefore induces an automorphism of $\Hps$, we may suppose additionally that $\partial_{x\eta}^{2}\Phi(0,\nu)$ is the identity matrix. Finally, after rotating, we may assume that $\nu=e_{1}$, which will be of minor notational convenience. 

Let $\psi\in \Sw(\Rn)$ be such that $\wh{\psi}$ is non-negative and not identically zero, and such that $\wh{\psi}(\eta)=0$ if $|\eta|>\theta$, for some small parameter $\theta>0$ to be fixed later. Let $k\in\Z_{+}$, and let $f$ be as in \eqref{eq:fnudef}:
\[
f(x):=e^{i2^{k}x_{1}}\psi(2^{k}x_{1},2^{k/2}x')
\]
for $x=(x_{1},x')\in\R\times\R^{n-1}=\Rn$. By choosing $\theta$ sufficiently small, we may suppose that
\[
\supp(\wh{f}\,)\subseteq V'\cap \{\eta\in\Rn\mid 2^{k-1}\leq |\eta|\leq 2^{k+1},|\hat{\eta}-e_{1}|\leq 2^{-k/2}\}.
\]
Then $\|\wh{f}\,\|_{L^{1}(\Rn)}\eqsim 1$ and $\|f\|_{L^{p}(\Rn)}\eqsim 2^{-k\frac{n+1}{2p}}$, and in particular
\begin{equation}\label{eq:fnorm2}
\|f\|_{\Hps}\eqsim 2^{k(s+\frac{n-1}{2}(\frac{1}{2}-\frac{1}{p}))}\|f\|_{L^{p}(\Rn)}\eqsim 2^{k(s+\frac{n-1}{2}(\frac{1}{2}-\frac{1}{p})-\frac{n+1}{2p})},
\end{equation}
by \cite[Proposition 2.2]{Rozendaal22b}. Here the implicit constants depend on $\theta$, but this does not matter since we will only apply \eqref{eq:fnorm2} once we have already fixed $\theta$. Finally, there exist $c_{0},\delta>0$ such that
\begin{equation}\label{eq:Realfy}
\Real f(x)\geq c_{0}
\end{equation}
whenever $|x_{1}|\leq \delta 2^{-k}$ and $|x'|\leq \delta 2^{-k/2}$.

Next, set
\[
E_{0}:=\{y=(y_{1},y')\in\Rn\mid y_{1}=0, |y'|\leq \theta2^{-k/2}\}.
\]
Then, for every $y\in E_{0}$, the curve $\gamma_{y}$ defined by
\[
\partial_{\eta}\Phi(\gamma_{y}(t),t,e_{1})
=\partial_{\eta}\Phi(y,0,e_{1})
\]
is well defined and smooth in a neighborhood of zero, by the implicit function theorem. Moreover, by homogeneity,
\[
-e_{1}\cdot \gamma_{0}'(0)=e_{1}\cdot\partial_{t}\partial_{\eta}\Phi(0,0,e_{1})=e_{1}\cdot\partial_{\eta}(\partial_{t}\Phi)(0,0,e_{1})=\partial_{t}\Phi(0,0,e_{1})\neq 0.
\]
So in a small neighborhood of zero in $E_{0}$, at $t=0$ the curves $\gamma_{y}$ all point out of the plane where $x_{1}=0$, and in particular out of $E_{0}$. Hence, by shrinking $\theta$ even more, we may suppose that the curves $\gamma_{y}$ are mutually disjoint as $y$ ranges over $E_{0}$. Set
\[
E:=\cup_{y\in E_{0}}\gamma_{y}([-\theta,\theta]).
\]
Then, since the $(n-1)$-dimensional measure of $E_{0}$ is approximately $2^{-k\frac{n-1}{2}}$, the $n$-dimensional measure of $E$ is approximately $2^{-k\frac{n-1}{2}}$ as well. Again, here the implicit constants depend on $\theta$, but this will not matter once $\theta$ has been fixed.

By definition, for each $x\in E$ there exist $y\in E_{0}$ and $t\in [-\theta,\theta]$ such that $\partial_{\eta}\Phi(x,t,e_{1})=\partial_{\eta}\Phi(y,0,e_{1})$. Taylor approximation, combined with the assumptions on $\Phi$, show that the latter equals
\begin{align*}
&\partial_{\eta}\Phi(0,0,e_{1})+\partial_{x\eta}^{2}\Phi(0,0,e_{1})y+\sum_{\alpha\in\Z_{+}^{n},|\alpha|=2}\int_{0}^{1}(1-r)\partial_{x}^{\alpha}\partial_{\eta}\Phi(ry,0,e_{1})\ud r\cdot y^{\alpha}\\
&=y+\sum_{\alpha\in\Z_{+}^{n},|\alpha|=2}\int_{0}^{1}(1-r)\partial_{x}^{\alpha}\partial_{\eta}\Phi(ry,0,e_{1})\ud r\cdot y^{\alpha}.
\end{align*}
Since $y_{1}=0$ and $|y'|\leq \theta2^{-k/2}$, we obtain from this that the first coordinate of $\partial_{\eta}\Phi(x,t,e_{1})$ is bounded in absolute value by
\[
|y_{1}|+\sum_{\alpha\in\Z_{+}^{n},|\alpha|=2}\Big|\int_{0}^{1}(1-r)\partial_{x}^{\alpha}\partial_{\eta}\Phi(ry,0,e_{1})\ud r\Big| |y|^{\alpha}\lesssim \theta2^{-k}.
\]
On the other hand, the remaining coordinates are bounded by
\[
|y'|+\sum_{\alpha\in\Z_{+}^{n},|\alpha|=2}\Big|\int_{0}^{1}(1-r)\partial_{x}^{\alpha}\partial_{\eta}\Phi(ry,0,e_{1})\ud r\Big| |y|^{\alpha}\lesssim \theta2^{-k/2}.
\]
Thus, by \eqref{eq:Realfy}, we can choose $\theta$ small enough such that
\begin{equation}\label{eq:Realfx}
\Real f(\partial_{\eta}\Phi(x,t,e_{1}))\geq c_{0}
\end{equation}
for every $x\in E$ and a suitable $t\in[-\theta,\theta]$.

We can now derive the crucial estimate. Indeed, Lemma \ref{lem:flow} shows that
\[
|T_{t}f(x)-(2\pi)^{n}f(\partial_{\eta}\Phi(x,t,e_{1}))|\lesssim \|\wh{f}\,\|_{L^{1}(\Rn)}|(x,t)|\lesssim \theta
\]
for all $x\in E$ and $t\in[-\theta,\theta]$. So \eqref{eq:Realfx} implies that
\[
\Real T_{t}f(x)\geq (2\pi)^{n}\Real f(\partial_{\eta}\Phi(x,t,e_{1}))-|T_{t}f(x)-(2\pi)^{n}f(\partial_{\eta}\Phi(x,0,e_{1}))|\gtrsim 1
\]
for the appropriate $t\in[-\theta,\theta]$, where we possibly had to shrink $\theta$ again. Fix the resulting $\theta$.

Then the previous bound, combined with the estimate for $|E|$ as well as \eqref{eq:sharpFIO2} and \eqref{eq:fnorm2}, allows us to conclude the proof. Indeed, one has
\begin{align*}
2^{-k\frac{n-1}{2p}}&\eqsim |E|^{1/p}\lesssim \Big(\int_{E}\sup_{t\in\R}|T_{t}f(x)|^{p}\ud x\Big)^{1/p}\lesssim \|f\|_{\HT^{s,p}_{FIO}(\Rn)}\\
&\eqsim 2^{k(s+\frac{n-1}{2}(\frac{1}{2}-\frac{1}{p})-\frac{n+1}{2p})}.
\end{align*}
After rearranging and letting $k\to\infty$, this concludes the proof.
\end{proof}

\begin{remark}\label{rem:sharpintermediate}
Combined, Theorem \ref{thm:sharpFIO1} and \ref{thm:sharpFIO2} show that, under the assumptions of Theorem \ref{thm:sharpFIO2}, for general $p\in[1,\infty]$ one has 
\[
s\geq \begin{cases}
s(p)+\frac{1}{p}+m&\text{if }1\leq p\leq 2,\\
-s(p)+\frac{1}{p}+m&\text{if }2\leq p\leq \frac{2n}{n-1},\\
s(p)+m&\text{if }\frac{2n}{n-1}\leq p\leq \infty.
\end{cases}
\]
The same holds for the other sharpness statements in this section. We do not know whether the corresponding results in Section \ref{sec:maximal} can be improved for $2<p<\frac{2(n+1)}{n-1}$.
\end{remark}

\begin{remark}\label{rem:sharpLp}
Recall that, under the assumptions of Theorems \ref{thm:maximalFIO1} and \ref{thm:maximalFIO2}, the Sobolev embeddings in \eqref{eq:Sobolev} yield an inequality of the form
\begin{equation}\label{eq:sharpLp}
\big\|\sup_{t\in \R}|T_{t}f|\big\|_{L^{p}(\Rn)}\lesssim \|f\|_{\HT^{s,p}(\Rn)}
\end{equation}
for all $p\in[1,2]$, $s>2s(p)+\frac{1}{p}+m$ and $f\in \HT^{s,p}(\Rn)$. On the other hand, the arguments in the proof of Theorem \ref{thm:sharpFIO2} merely show that an inequality such as \eqref{eq:sharpLp} implies that $s>\frac{1}{p}+m$, due to the discrepancy between the $\Hps$ norm and the $\HT^{s,p}(\Rn)$ norm of the function $f$ in \eqref{eq:fnorm2}, in particular for $p< 2$.
\end{remark}

\vanish{
\begin{remark}\label{rem:cinematicsharp}
It is worth pointing out that if $T$ satisfies the cinematic curvature condition from \cite{Sogge91}, then the nontriviality condition on the phase in Theorem \ref{thm:sharphyp2} is satisfied (see e.g.~\cite[Section 8.1]{Sogge17}). This fact implicitly plays a role in the proofs of Propositions \ref{prop:sharpman1} and \ref{prop:sharpman2}.
\end{remark}
}

\subsection{Sharpness for hypersurfaces}\label{subsec:sharpsurf}

In this subsection we prove that the maximal function bounds in Sections \ref{subsec:hypersurf} and \ref{subsec:complex} are essentially sharp for suitable $p$.

We first prove sharpness in the case of Theorem \ref{thm:maximalhyp1}. Recall the definitions of $\A_{t}$ and $\Ma_{\Sigma,\psi}$ from \eqref{eq:defA} and \eqref{eq:defM}, for $\Sigma\subseteq\Rn$ a hypersurface, $\psi\in C^{\infty}_{c}(\Rn)$ and $t>0$. %:
%\[
%\Ma_{\Sigma,\psi}f(x):=\sup_{1\leq t\leq 2}\Big|\int_{\Sigma}f(x-ty)\psi(y)\ud\sigma(y)\Big|,
%\]
%for $f\in C(\Rn)$ and $x\in\Rn$. 
The following result shows that the maximal function estimates in Theorem \ref{thm:maximalhyp1} are essentially sharp for $p\in[1,2]\cup [\frac{2(n+1)}{n-1},\infty)$ in all nontrivial cases, and in particular in the case of Theorem \ref{thm:maximalhypintro} (see also Remark \ref{rem:pinfty}).

\begin{theorem}\label{thm:sharphyp1}
Let $p\in[1,\infty]$ and $s\in\R$. Let $\Sigma\subseteq\Rn$ be a hypersurface with non-vanishing Gaussian curvature, and let $ \psi\in C^{\infty}_{c}(\Rn)$ not be identically zero. If there exist $t_{0},C_{t_{0}}>0$ such that
\begin{equation}\label{eq:sharphyp11}
\|\A_{t_{0}}f\|_{L^{p}(\Rn)}\leq C_{t_{0}}\|f\|_{\Hps}
\end{equation}
for all $f\in \Sw(\Rn)$, then $s\geq \frac{n-1}{2}(\frac{1}{2}-\frac{1}{p})-\frac{n-1}{2}$. Moreover, if there exists a $C\geq 0$ such that
\begin{equation}\label{eq:sharphyp12}
\|\Ma_{\Sigma,\psi}f\|_{L^{p}(\Rn)}\leq C\|f\|_{\Hps}
\end{equation}
for all $f\in\Sw(\Rn)$, then $s\geq \frac{n-1}{2}(\frac{1}{p}-\frac{1}{2})+\frac{1}{p}-\frac{n-1}{2}$.
\end{theorem}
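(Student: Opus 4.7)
The plan is to reduce both parts to the Fourier integral operator sharpness results proved earlier in this section. The common input is the standard stationary phase expansion: after decomposing $\psi$ via a partition of unity subordinate to small coordinate patches on $\Sigma$ and using the non-vanishing Gaussian curvature of $\Sigma$, one has
\[
b(\xi):=\widehat{\psi\,d\sigma}(\xi)=\sum_{j} a_{j}(\xi)e^{i\phi_{j}(\xi)}+r(\xi),
\]
where each $\phi_{j}\in C^{\infty}(V_{j})$ is real-valued and positively homogeneous of degree $1$ on an open cone $V_{j}\subseteq\Rn\setminus\{0\}$ with $\rank\,\partial^{2}_{\xi\xi}\phi_{j}=n-1$ on $V_{j}$, each $a_{j}\in S^{-(n-1)/2}(\Rn)$ is supported in and elliptic on a conic subset $V_{j}'\subseteq V_{j}$, and $r\in S^{-N}(\Rn)$ for all $N$. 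Since $\psi\not\equiv 0$, we may fix one such $j_{0}$ so that $a_{j_{0}}$ is elliptic on a nonempty cone $V_{j_{0}}'$.

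\emph{Part 1.} Since $\A_{t_{0}}f=b(t_{0}D)f$ and $\phi_{j}$ is homogeneous of degree $1$, we can write
\[
\A_{t_{0}}f(x)=\sum_{j}\int_{\Rn}e^{i(x\cdot\xi+t_{0}\phi_{j}(\xi))}a_{j}(t_{0}\xi)\wh{f}(\xi)\,d\xi+r(t_{0}D)f(x),
\]
where the smoothing remainder $r(t_{0}D)f$ contributes negligibly. For any test function $f\in\Sw(\Rn)$ with $\supp(\wh{f}\,)\subseteq V_{j_{0}}'$, the terms with $j\neq j_{0}$ are FIOs whose canonical relations transport the wave-packet content of $f$ to regions disjoint from that of the $j_{0}$ term (as $V_{j_{0}}'\cap V_{j}'$ lies in the common cone and the gradients $\nabla\phi_{j_{0}}$, $\nabla\phi_{j}$ are distinct at directions in this cone, these "focusing locations" differ by a fixed macroscopic distance). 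Restricting the integration in the lower bound to a neighborhood of the $j_{0}$-focusing region then effectively reduces the hypothesis \eqref{eq:sharphyp11} to
\[
\big\|T_{t_{0}}f\big\|_{L^{p}(\Rn)}\leq C'\|f\|_{\HT^{s,p}_{FIO}(\Rn)}
\]
for such test functions, where $T_{t_{0}}$ corresponds to the phase $\Phi(x,t,\xi)=x\cdot\xi+t\phi_{j_{0}}(\xi)$ (which satisfies $\rank\,\partial^{2}_{x\eta}\Phi=n$) and amplitude $a(x,t,\xi)=a_{j_{0}}(t\xi)\in S^{-(n-1)/2}$, elliptic at $(z_{0},\xi_{0})=(0,t_{0},\xi_{0})$ for any $\xi_{0}\in V_{j_{0}}'$. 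Since the proof of Theorem \ref{thm:sharpFIO1} only uses test functions with Fourier support in a small cone, that theorem applies with $m=-(n-1)/2$, yielding $s\geq s(p)-\frac{n-1}{2}$.

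\emph{Part 2.} For $\Sigma=S^{n-1}$ the stationary phase expansion has exactly two terms, with $\phi^{\pm}(\xi)=\pm|\xi|$, so
\[
\A_{t}f=T_{t}^{+}f+T_{t}^{-}f+r(tD)f,\qquad T_{t}^{\pm}f(x)=\int_{\Rn}e^{i(x\cdot\xi\pm t|\xi|)}a^{\pm}(t\xi)\wh{f}(\xi)\,d\xi,
\]
with $a^{\pm}\in S^{-(n-1)/2}$ elliptic on all of $\Rn\setminus\{0\}$. Following the proof of Theorem \ref{thm:sharpFIO2}, take the Knapp-type test function
\[
f(x):=e^{i2^{k}e_{1}\cdot x}\psi(2^{k}(e_{1}\cdot x)e_{1}+2^{k/2}\Pi_{e_{1}}^{\perp}x)
\]
with $\wh{\psi}\geq 0$ and compactly supported in a small ball, so that $\supp(\wh{f}\,)$ is a Knapp tube around $2^{k}e_{1}$. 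Choose the set
\[
E:=\{x=(x_{1},x')\in\Rn\mid x_{1}\in[-2,-1],\ |x'|\leq\ka 2^{-k/2}\}
\]
of measure $|E|\eqsim 2^{-k(n-1)/2}$. For $x\in E$, set $t_{*}=t_{*}(x):=-x_{1}\in[1,2]$; then the $T_{t}^{+}$ phase at $t=t_{*}$ is $x\cdot\xi-x_{1}|\xi|=x_{1}(\xi_{1}-|\xi|)+x'\cdot\xi'$, bounded by $\frac{3}{2}\ka$ on $\supp(\wh{f}\,)$ by \eqref{eq:suppprop}, exactly as in the proof of Theorem \ref{thm:sharpFIO2}. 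Taking real parts gives $|T_{t_{*}}^{+}f(x)|\gtrsim 2^{-k(n-1)/2}$. Meanwhile, for every $t\in[1,2]$, the gradient in $\xi$ of the $T_{t}^{-}$ phase at $\xi\approx 2^{k}e_{1}$ is approximately $(x_{1}-t,x')$ with $|x_{1}-t|\geq 2$, so the phase is uniformly non-stationary; repeated integration by parts in $\xi$, using the tube scales $(2^{k},2^{k/2},\dots,2^{k/2})$ of $\supp(\wh{f}\,)$, yields $\sup_{t\in[1,2]}|T_{t}^{-}f(x)|=O(2^{-k(n-1)/2-kN/2})$ for any $N$, and similarly for $r(tD)f$. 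Hence $\sup_{t\in[1,2]}|\A_{t}f(x)|\gtrsim 2^{-k(n-1)/2}$ on $E$. Combining with the Hardy space norm estimate $\|f\|_{\HT^{s,p}_{FIO}(\Rn)}\eqsim 2^{k(s-s(p)-\frac{n+1}{2p})}$ from the proof of Theorem \ref{thm:sharpFIO2} and the hypothesis \eqref{eq:sharphyp12} gives
\[
2^{-k\frac{n-1}{2}}\cdot 2^{-k\frac{n-1}{2p}}\eqsim 2^{-k\frac{n-1}{2}}|E|^{1/p}\lesssim\|\Ma_{\Sigma,\psi}f\|_{L^{p}(\Rn)}\lesssim 2^{k(s-s(p)-\frac{n+1}{2p})},
\]
which forces $s\geq s(p)+\frac{1}{p}-\frac{n-1}{2}$.

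\emph{Expected main obstacle.} The technical heart of the argument is controlling, in Part 2, the "wrong-sign" half-wave propagator $T_{t}^{-}$ uniformly in $t\in[1,2]$ simultaneously with the lower bound on $T_{t_{*}}^{+}$; the choice $x_{1}\in[-2,-1]$ guarantees that the stationary point $t=x_{1}$ of the $T_{t}^{-}$ phase never lies in the supremum window, which is what allows a clean integration-by-parts bound. A similar bookkeeping issue arises in Part 1, where the two stationary points of $\widehat{\psi\,d\sigma}$ on a generic convex hypersurface must be separated via the focusing locations $x\pm t_{0}\nabla\phi^{\pm}(\nu)$ so that the sharpness hypothesis \eqref{eq:sharphyp11} can be read as an estimate on a single FIO term.
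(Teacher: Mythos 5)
Your overall strategy (decompose $\widehat{\psi\,d\sigma}$ by stationary phase and test directly with the wave-packet/Knapp examples from the proofs of Theorems \ref{thm:sharpFIO1} and \ref{thm:sharpFIO2}) is a genuinely different route from the paper, which instead microlocalizes with pseudodifferential cutoffs $S_1,S_2$ and writes $S_1\A_{t_0}S_2$ (resp.\ $S_1TS_2$ with $Tf(x,t)=\A_tf(x)$) as $T'S'+R$ with $T'$ exactly of the form \eqref{eq:Tstandard2}, so that Theorems \ref{thm:sharpFIO1} and \ref{thm:sharpFIO2} apply verbatim. Your route is viable and the final numerology is correct, but as written it has two concrete gaps.

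First, in Part 2 the key lower bound on $T^+_{t_*}f$ is not justified. With $x_1\in[-2,-1]$, the bound \eqref{eq:suppprop} only gives $|x_1(\xi_1-|\xi|)|\le 2$, not $\tfrac32\kappa$: the ``exactly as in the proof of Theorem \ref{thm:sharpFIO2}'' step there used $|x_1|\le\kappa/2$, which you cannot have since you need $t_*=-x_1\in[1,2]$. A phase of size $2$ destroys the ``take real parts'' argument. The fix is to use the finer information on the Knapp support: if $\supp(\wh\psi)\subseteq\{|\eta|\le c\}$ then $|\xi'|\le c2^{k/2}$ and $|\xi|-\xi_1\le c^2$ on $\supp(\wh f)$, so the phase is $O(c^2+c\kappa)$, small for $c,\kappa$ small; but this must be said, since \eqref{eq:suppprop} alone is insufficient. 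Moreover, ``taking real parts'' also requires that the argument of the complex symbol $a^+(t\xi)$ be essentially constant on the support (e.g.\ from the Bessel asymptotics $a^+(\xi)=c_+|\xi|^{-\frac{n-1}{2}}(1+O(|\xi|^{-1}))$); ellipticity alone does not give a sign. Relatedly, for a general $0\neq\psi\in C^\infty_c(\Rn)$ the amplitudes $a^\pm$ are \emph{not} elliptic on all of $\Rn\setminus\{0\}$: they are proportional to $\psi$ evaluated at the two antipodal stationary points $\pm\hat\xi$, so you must choose the direction $e_1$ and the branch ($+$ or $-$, with the sign of $x_1$ adjusted accordingly) so that $\psi$ is nonzero at the corresponding point of $S^{n-1}$; your argument as written implicitly assumes $\psi$ is nonvanishing on the sphere. (The paper's microlocalization at a non-characteristic point of $T$ handles general $\psi$ automatically.)

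Second, in Part 1 the reduction to a single FIO term is asserted rather than proved: the statement that the $j\neq j_0$ branches ``transport the wave-packet content to disjoint regions'' needs a quantitative non-stationary phase estimate showing that, for the specific test functions $f=\sum_\nu f_\nu$ (after the refocusing normalization internal to the proof of Theorem \ref{thm:sharpFIO1}), one has $|T^{(j)}_{t_0}f(x)|=O_N(2^{-kN})$ for $x$ near the $j_0$-focal point, using the uniform separation of the stationary points $y_j(\nu)\neq y_{j_0}(\nu)$ over the chosen cone and the $2^{k/2}$ frequency scales. Because these cross terms are only rapidly decaying rather than zero, the hypothesis of Theorem \ref{thm:sharpFIO1} does not literally hold for the single term, so you cannot cite that theorem as a black box; you must rerun its final inequality with the $O(2^{-kN})$ errors absorbed. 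Both gaps are repairable, but they are exactly the points where the paper's argument via $S_1TS_2=T'S'+R$ (cf.\ \cite{LiRoSoYa24}) and a literal application of Theorems \ref{thm:sharpFIO1} and \ref{thm:sharpFIO2} buys a cleaner proof, at the price of invoking the FIO calculus.
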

\begin{proof}
We would like to apply Theorems \ref{thm:sharpFIO1} and \ref{thm:sharpFIO2}. This is complicated by the fact that, although one may express $\A_{t}$ in terms of a sum of operators as in \eqref{eq:Tstandard2}, the contributions from the summands may, a priori, cancel each other out. To deal with this issue we will microlocalize in regions of phase space where there is only a single summand. %In turn, t
%To this end %one could rely on a setup as in the proof of Theorem \ref{thm:maximalhyp1} and in \cite[Section VIII.5.7]{Stein93}. Instead, 
We will reason in a manner which can also be used for other results in this section. Throughout, we freely make use of the theory of Fourier integral operators as can be found in \cite[Chapter XXV]{Hormander09} (see also \cite[Chapter 6]{Sogge17}).

\subsubsection{Preliminary work}
Note that we can represent $\A_{1}$ as in \eqref{eq:Lagrangian}:
\[
\A_{1}f(x)=\int_{x-\Sigma}f(y)\psi(x-y)\ud\sigma(x-y)=\int_{\Rn}\int_{\R}e^{i\theta\Psi_{1}(x,y)}\rho_{0}(x,1,y)f(y)\ud\theta\ud y
\]
for all $f\in\Sw(\Rn)$ and $x\in\Rn$. Here $\Psi_{1}\in C^{\infty}(\Rn\times \Rn)$ is such that $x-\Sigma=\{y\in\Rn\mid \Psi_{1}(x,y)=0\}$ for all $x\in\Rn$, and $\rho_{0}(x,1,y)=(2\pi)^{-1}\psi(x-y)|\partial_{y}\Psi_{1}(x,y)|$ for $y\in x-\Sigma$. By dilation, one then has
\begin{equation}\label{eq:At}
\A_{t}f(x)=\int_{\Rn}\int_{\R}e^{i\theta\Psi_{t}(x,y)}\rho_{0}(x,t,y)f(y)\ud\theta\ud y
\end{equation}
for all $t>0$, $f\in \Sw(\Rn)$ and $x\in\Rn$, where $\Psi_{t}(x,y)=\Psi_{1}(x/t,y/t)$ and $\rho_{0}(x,t,y)=t^{-n}\rho_{0}(x/t,1,y/t)$. Moreover, the assumption that $\Sigma$ has non-vanishing Gaussian curvature is equivalent to \eqref{eq:rotation} (see e.g.~\cite[Section XI.3.1]{Stein93}), and $\rho_{0}(\cdot,t,\cdot)$ is not identically zero. It follows that each $\A_{t}$ is a Fourier integral operator of order $-(n-1)/2$ with canonical relation 
\[
\Ca_{t}=\{(x,\theta\partial_{x}\Psi_{t}(x,y),y,-\theta\partial_{y}\Psi_{t}(x,y))\mid \Psi_{t}(x,y)=0,\theta\in\R\setminus\{0\}\}.
\]
Also, \eqref{eq:rotation} just says that the Jacobian of 
\[
(y,\theta)\mapsto \big(\partial_{x}(\theta\Psi_{t}(x,y)), \theta\Psi_{t}(x,y)\big)
\]
does not vanish where $\Psi_{t}(x,y)=0$ and $\theta\neq 0$. Hence one can solve $\Psi_{t}(x,y)=0$ and $\theta\partial_{x}\Psi_{t}(x,y)=\xi$ with respect to $(y,\theta)$ and use $(x,\xi)$ as local coordinates on $\Ca_{t}$. This in turn implies that the projection from $\Ca_{t}$ onto the $(x,\xi)$ coordinates is a submersion. Similarly, one can use $(y,\eta)$ as local coordinates on $\Ca_{t}$, and the projection from $\Ca_{t}$ onto the $(y,\eta)$ coordinates is also a submersion.

Finally, since $\rho_{0}(\cdot,t,\cdot)$ is not identically zero, the principal symbol of $\A_{t}$ does not vanish at some point $c_{t}=(x_{t},\xi_{t},y_{t},\eta_{t})\in \Ca_{t}$.

\subsubsection{Proof of the first statement}

{Suppose that \eqref{eq:sharphyp11} holds. %, and  let $c=(x_{0},\xi_{0},y_{0},\eta_{0})\in \Ca_{t_{0}}$ be such that the principal symbol of $\A_{t_{0}}$ does not vanish at $c$. 
We will microlocalize by multiplying $\A_{t_{0}}$ from the right by a pseudodifferential operator. More precisely, let $S$ be a pseudodifferential operator of order zero on $\Rn$, with a principal symbol which is nonzero at $(y_{t_{0}},\eta_{t_{0}})$, and with microsupport contained in a small conic neighborhood of this point. Since $\Hps$ is invariant under pseudodifferential operators of order zero, one then has
\[
\|\A_{t_{0}}Sf\|_{L^{p}(\Rn)}\lesssim \|f\|_{\Hps}
\]
for all $f\in\Sw(\Rn)$. As noted above, the canonical relation of $\A_{t_{0}}$, which in turn equals that of $\A_{t_{0}}S$, can be locally parametrized near $c_{t_{0}}=(x_{t_{0}},\xi_{t_{0}},y_{t_{0}},\eta_{t_{0}})$ using the $(y,\eta)$ coordinates. Given that $S$ has microsupport near $(y_{t_{0}},\eta_{t_{0}})$, the canonical relation of $\A_{t_{0}}S$ is thus contained in a small conic neighborhood of $c_{t_{0}}$, and its principal symbol does not vanish at $c_{t_{0}}$. In fact, after multiplying on the right by a smooth cutoff, we may additionally suppose that $\A_{t_{0}}Sf=0$ if $f$ is supported outside a small neighborhood of $y_{t_{0}}$. }

It then follows (see \cite[Lemma A.2 and Remark A.4]{LiRoSoYa24}) that one can write $\A_{t_{0}}S=T_{t_{0}}S_{t_{0}}'+R_{t_{0}}$, where $R_{t_{0}}$ is a smoothing operator, $S_{t_{0}}'$ is a change of coordinates and $T_{t_{0}}$ is an operator as in \eqref{eq:Tstandard2}, with $m=-(n-1)/2$, albeit with the $(n+1)$-dimensional $z$ variable replaced by the $n$-dimensional $x$ variable. Moreover, $T_{t_{0}}$ satisfies the $n$-dimensional version of the conditions in Theorem \ref{thm:sharpFIO1}. Here we have used the projection conditions on $\Ca_{t_{0}}$.% and the fact that we have microlocalized to a small neighborhood of $c_{t_{0}}$.

Since $\|{S_{t_{0}}'}f\|_{\Hps}\eqsim \|f\|_{\Hps}$ for $f\in\Sw(\Rn)$ supported in a suitably small neighborhood of $y_{t_{0}}$, and because $R_{t_{0}}:\Hps\to L^{p}(\Rn)$, we have 
\[
\|T_{t_{0}}f\|_{L^{p}(\Rn)}\lesssim \|f\|_{\Hps}
\]
for $f\in\Sw(\Rn)$. Now Theorem \ref{thm:sharpFIO1} and Remark \ref{rem:singleFIO} imply that $s\geq \frac{n-1}{2}(\frac{1}{2}-\frac{1}{p})-\frac{n-1}{2}$. 

\subsubsection{More preliminary work} 
We will now prepare for the proof of the second statement.

Set $\A f(x,t):=\chi(t)\A_{t}f(x)$ for $f\in\Sw(\Rn)$, $x\in\Rn$ and $t\in\R$, where $\chi\in C^{\infty}_{c}(\R)$ satisfies $\supp(\chi)\subseteq[1,2]$ and does not vanish at some point $t_{0}\in (1,2)$. Then $\A$ is a Fourier integral operator of order $-\frac{n-1}{2}-\frac{1}{4}$ with canonical relation
\begin{equation}\label{eq:canon}
\Ca:=\{(x,t,\theta\partial_{x}\Psi_{t}(x,y),\theta\partial_{t}\Psi_{t}(x,y),y,\theta\partial_{y}\Psi_{t}(x,y))\mid  y\in x-\Sigma,\theta\in\R\setminus\{0\}\}.
\end{equation}
The projections onto the $(x,t)$ coordinate and onto the last two coordinates are submersions. This implies that, if the $(x,t)$ coordinate and the last two coordinates are fixed on $\Ca$, then so are the remaining coordinates. Moreover, the function $\chi(t_{0})\rho_{0}(\cdot,t_{0},\cdot)$ is not identically zero. Hence the principal symbol of $\A$ does not vanish at some point $c_{0}=(x_{0},t_{0},\xi_{0},\tau_{0},y_{0},\eta_{0})\in \Ca$. In fact, the principal symbol is nonzero in an open neighborhood of $c_{0}$ in $\Ca$, and the $y$-coordinate in this neighborhood may vary over an open neighborhood within the hypersurface $x_{0}-\Sigma$, by \eqref{eq:canon}. In particular, we may thus assume that $x_{0}\neq y_{0}$.

\subsubsection{Proof of the second statement}
Suppose that \eqref{eq:sharphyp12} holds. As in the proof of the first statement, we will microlocalize $\A$. However, unlike in the setting of the first statement, $\Ca$ cannot be locally parametrized using only its last two coordinates, so we also need to multiply $\A$ on the left by a pseudodifferential operator. On the other hand, the mixed norm implicit on the left-hand side of \eqref{eq:sharphyp12} is not invariant under general pseudodifferential operators, so we have to be slightly careful here.

Again let $S$ be a pseudodifferential operator of order zero with a principal symbol which is nonzero at $(y_{0},\eta_{0})$, and with microsupport contained in a small conic neighborhood of this point.  Let $\chi_{1}\in C^{\infty}_{c}(\R^{n+1})$ be nonzero at $(x_{0},t_{0})$ and supported in a small neighborhood of this point. Then
\[
\big\|\sup_{t\in\R}|\chi_{1}(\cdot,t)\A Sf(\cdot,t)|\big\|_{L^{p}(\Rn)}\lesssim \big\|\sup_{1\leq t\leq 2}|\chi(t)\A_{t}Sf|\big\|_{L^{p}(\Rn)}\lesssim \|f\|_{\Hps}
\]
for all $f\in\Sw(\Rn)$. Here we used \eqref{eq:sharphyp12}, that $\Hps$ is invariant under $S$, and that $\chi$ and $\chi_{1}$ are multiplication operators.
Moreover, since $\Ca$ is determined by its $(x,t)$ coordinate and its last two coordinates, it suffices to localize near $(x_{0},t_{0})$ and microlocalize near $(y_{0},\eta_{0})$, as we have done, to guarantee that the canonical relation of $\chi_{1}\A S$ is contained in a small conic neighborhood of $c_{0}$. Finally, after multiplying by another smooth cutoff, we may suppose that $\chi_{1}\A Sf=0$ if $f$ is supported outside a small neighborhood of $y_{0}$.

It then follows (see \cite[Lemma A.2 and Remark A.4]{LiRoSoYa24}) that $\chi_{1}\A S=TS'+R$, where $R$ is a smoothing operator, $S'$ is a change of coordinates and $T$ is an operator as in \eqref{eq:Tstandard2}, with $m=-(n-1)/2$, satisfying the conditions of Theorem \ref{thm:sharpFIO1}. One again has $\|{S'}f\|_{\Hps}\eqsim \|f\|_{\Hps}$ for $f$ supported in a suitably small neighborhood of $y_{0}$. Moreover, as in the proof of Theorem \ref{thm:maximalFIO1}, one can show that $\|\sup_{t\in\R}|Rf(\cdot,t)|\|_{L^{p}(\Rn)}\lesssim \|f\|_{\Hps}$ for all $f\in\Hps$. Hence
\begin{equation}\label{eq:sharphyp13}
\big\|\sup_{t\in\R}|T_{t}f|\big\|_{L^{p}(\Rn)}\lesssim \|f\|_{\Hps}
\end{equation}
for $f\in\Sw(\Rn)$. It thus remains to show that the phase $\Phi$ of satisfies the nontriviality condition in Theorem \ref{thm:maximalFIO2}.

To this end %, and let $\eta_{0}\in V'$. Since $R$ is smoothing, we may suppose that $z_{0}$ Since 
first note that, by the expression for $T$ in \eqref{eq:Tstandard2}, the canonical relation of $T$ is contained in
\begin{equation}\label{eq:canon1}
\Ca_{1}:=\{(x,t,\partial_{x}\Phi(x,t,\eta),\partial_{t}\Phi(x,t,\eta),\partial_{\eta}\Phi(x,t,\eta),\eta)\mid (x,t,\eta)\in V\}.
\end{equation}
On the other hand, since $R$ is smoothing, the canonical relation of $TS'$ coincides with that of $\chi_{1}\A S$, which in turn is contained in a small neighborhood of $c_{0}$ in $\Ca$. Moreover, since $S'$ is a change of coordinates on $\Rn$, this means that the canonical relation of $T$ is contained in 
\begin{equation}\label{eq:canon2}
\Ca_{2}:=\{(x,t,\xi,\tau,\ka(y),(\partial_{\ka}(y))_{*}\eta)\mid  (x,t,\xi,\tau,y,\eta)\in\Ca\}.
\end{equation}
Here $\ka$ is a diffeomorphism on $\Rn$ and $(\partial_{\ka}(y))_{*}$ is the induced map on the cotangent space $T^{*}_{y}\Rn$. Let $z_{0}\in\R^{n+1}$ and $V'\subseteq \Rn\setminus\{0\}$ be as in Theorem \ref{thm:sharpFIO2}, and let $\eta'\in V'$. Then, by comparing \eqref{eq:canon}, \eqref{eq:canon1} and \eqref{eq:canon2}, we see that there exist $x\in\Rn$, $y\in x-\Sigma$ and $\theta\neq0$ such that $\partial_{t}\Phi(z_{0},\eta')=\theta\partial_{t}\Psi_{t}(x,y)$. Since $x_{0}\neq y_{0}$, we may also assume that $x\neq y$. On the other hand, since $\A_{1}$ is translation invariant, one has $\Psi_{t}(x,y)=\Psi(\frac{x-y}{t})$ for some $\Psi\in C^{\infty}(\Rn)$. So
\[
\partial_{t}\Phi(z_{0},\eta')=-\tfrac{1}{t^{2}}\theta(\nabla\Psi)(\tfrac{x-y}{t})\cdot(x-y),
\]
and the latter is nonzero by the assumption that $\Sigma$ has non-vanishing Gaussian curvature. Now Theorem \ref{thm:sharpFIO2} and \eqref{eq:sharphyp13} conclude the proof.
\end{proof}

Next, we consider Theorem \ref{thm:maximalhyp2}, which we show to be essentially sharp for all $p\in[1,2]\cup [\frac{2(n+1)}{n-1},\infty)$ under natural nontriviality assumptions (see also Remark \ref{rem:pinfty} for $p=\infty$). We use notation as in Theorem \ref{thm:maximalhyp2}.

\begin{theorem}\label{thm:sharphyp2}
Let $p\in[1,\infty]$ and $s\in\R$. Let $\Psi\in C^{\infty}(\R^{n+1}\times\Rn)$ be such that \eqref{it:rotational} holds, and let $\rho\in C^{\infty}_{c}(\R^{n+1}\times\Rn)$, $(x_{0},t_{0})\in\R^{n+1}$ and $y_{0}\in\Sigma_{x_{0},t_{0}}$ be such that $\rho(x_{0},t_{0},y_{0})\neq 0$. If there exists a $C_{t_{0}}\geq0$ such that
\[
%\begin{equation}\label{eq:sharphyp2}
\|T_{t_{0}}f\|_{L^{p}(\Rn)}\leq C_{t_{0}}\|f\|_{\Hps}
%\end{equation}
\]
for all $f\in \Sw(\Rn)$, then $s\geq \frac{n-1}{2}(\frac{1}{2}-\frac{1}{p})-\frac{n-1}{2}$. Moreover, if $\partial_{t}\Psi_{t}(x_{0},y_{0})|_{t=t_{0}}\neq 0$ and if there exists a $C\geq0$ such that 
\[
\big\|\sup_{t\in\R}|T_{t}f|\big\|_{L^{p}(\Rn)}\leq C\|f\|_{\Hps}
\]
for all $f\in\Sw(\Rn)$, then $s\geq \frac{n-1}{2}(\frac{1}{p}-\frac{1}{2})+\frac{1}{p}-\frac{n-1}{2}$.
\end{theorem}

\begin{proof}
The proof is analogous to that of Theorem \ref{thm:sharphyp1}, but it requires less work. Indeed, %as already noted in the proof of Theorem \ref{thm:maximalhyp2}, 
each $T_{t}$ can be expressed as in \eqref{eq:At}, and \eqref{it:rotational} is satisfied by assumption. Then the same arguments as in the proof of Theorem \ref{thm:sharphyp1} yield the first statement.

The proof of the second statement is also analogous to that of Theorem \ref{thm:sharphyp1}, but it requires slightly less work because $\partial_{t}\Psi_{t}(x_{0},y_{0})|_{t=t_{0}}\neq 0$ holds by assumption.
\end{proof}

\begin{remark}\label{rem:partialdef}
As in Theorem \ref{thm:maximalhyp2} (see Remark \ref{rem:specificsurface}), one does not need $\Psi$ to satisfy \eqref{it:rotational} on all of $\R^{n+1}\times \Rn$, only in a neighborhood of the support of $\rho$.

Also, one may suppose that $\partial_{t}\Psi_{t}(x_{0},y_{0})|_{t=t_{0}}\neq 0$ in Theorem \ref{thm:sharphyp2} whenever condition \eqref{it:cinematic} in Theorem \ref{thm:maximalhyp2} is satisfied, by the same argument as in Remark \ref{rem:cinematic}.
\end{remark}
%Finally, w
We also obtain sharpness for all $\alpha\in\C$ and $p\in [1,2]\cup  [\frac{2(n+1)}{n-1},\infty)$ in Proposition \ref{prop:complexsphere} (see Remark \ref{rem:pinfty} for $p=\infty$), as a consequence of the following result. Recall the definition of $\Ma_{t}^{\alpha}$ from \eqref{eq:complexmean}, for $\alpha\in\C$ and $t>0$.

\begin{proposition}\label{prop:sharpcomplex}
Let $p\in[1,\infty]$, $s\in\R$ and $\alpha\in\C$. If there exist $t_{0},C_{t_{0}}>0$ such that
\begin{equation}\label{eq:sharpcomplex1}
\|\Ma_{t_{0}}^{\alpha}f\|_{L^{p}(\Rn)}\leq C_{t_{0}}\|f\|_{\Hps}
\end{equation}
for all $f\in \Sw(\Rn)$, then $s\geq \frac{n-1}{2}(\frac{1}{2}-\frac{1}{p})-\frac{n-1}{2}-\Real(\alpha)$. Moreover, if there exist a non-empty open interval $I\subseteq (0,\infty)$ and a $C\geq0$ such that
\begin{equation}\label{eq:sharpcomplex2}
\big\|\sup_{t\in I}|\Ma_{t}^{\alpha}f|\big\|_{L^{p}(\Rn)}\leq C\|f\|_{\Hps}
\end{equation}
for all $f\in\Sw(\Rn)$, then $s\geq \frac{n-1}{2}(\frac{1}{p}-\frac{1}{2})+\frac{1}{p}-\frac{n-1}{2}-\Real(\alpha)$.
\end{proposition}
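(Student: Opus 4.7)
The plan is to mirror the proof of Theorem \ref{thm:sharphyp1}, reducing both assertions to the sharpness results for Fourier integral operators, Theorems \ref{thm:sharpFIO1} and \ref{thm:sharpFIO2}. The starting point is the decomposition $m_\alpha(\xi) = a_\alpha^0(\xi) + e^{i|\xi|}a_\alpha^+(\xi) + e^{-i|\xi|}a_\alpha^-(\xi)$ recalled in the proof of Proposition \ref{prop:complexsphere}, with $a_\alpha^0 \in C^\infty_c(\Rn)$ and $a_\alpha^\pm \in S^{-\frac{n-1}{2}-\Real(\alpha)}(\Rn)$. Correspondingly,
\[
\Ma_t^\alpha = a_\alpha^0(tD) + e^{it|D|}a_\alpha^+(tD) + e^{-it|D|}a_\alpha^-(tD).
\]
The low-frequency term $a_\alpha^0(tD)$ is convolution with a Schwartz function, uniformly so for $t$ in a compact subset of $(0,\infty)$, and by \eqref{eq:Sobolev} and Young's inequality it maps $\Hps\to L^p$ boundedly; it can thus be treated as a harmless perturbation throughout. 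Moreover, the asymptotic expansion of Bessel functions ensures that both $a_\alpha^+$ and $a_\alpha^-$ are elliptic of order $m := -\frac{n-1}{2}-\Real(\alpha)$ outside a compact set.

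For Part 1, the operator $\Ma_{t_0}^\alpha$ is a Fourier integral operator of order $m$ whose canonical relation (modulo the low-frequency contribution) is the union of two graph components $\Ca^\pm = \{(x,\eta,x \pm t_0\hat{\eta},\eta) \mid \eta \neq 0\}$. Fixing $\eta_0 \in S^{n-1}$, $x_0\in\Rn$, and setting $y_0 := x_0 - t_0\hat{\eta_0}$, the point $(x_0,\eta_0,y_0,\eta_0)$ lies in $\Ca^-$. Pre- and post-composing $\Ma_{t_0}^\alpha$ with zeroth-order pseudodifferential operators $S_2$ and $S_1$ whose principal symbols are supported in small conic neighborhoods of $(y_0,\eta_0)$ and $(x_0,\eta_0)$ respectively isolates the $\Ca^-$ branch, since the source of the $\Ca^+$ branch emanating from $(x_0,\eta_0)$ is $x_0 + t_0\hat{\eta_0} \neq y_0$. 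Following the proof of Theorem \ref{thm:sharphyp1}, one then writes $S_1 \Ma_{t_0}^\alpha S_2 = T'S' + R$ with $R$ smoothing, $S'$ a change of coordinates, and $T'$ an operator as in \eqref{eq:Tstandard2} with symbol of order $m$ that is non-characteristic somewhere; Theorem \ref{thm:sharpFIO1} then yields $s + \frac{n-1}{2} + \Real(\alpha) \geq s(p)$, as required.

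For Part 2, the same scheme is applied to $Tf(x,t) := \Ma_t^\alpha f(x)$, regarded as a Fourier integral operator from $\Rn$ to $\R^{n+1}$ whose canonical relation has two branches corresponding to the phases $x\cdot\eta \pm t|\eta|$. Fixing $t_0 \in I$, $\eta_0 \in S^{n-1}$, $x_0\in\Rn$, and $y_0 := x_0 - t_0\hat{\eta_0}$, we microlocalize $T$ by a source-side pseudodifferential operator $S_2$ of order zero supported in a small conic neighborhood of $(y_0,\eta_0)$, together with a target-side smooth cutoff on $\R^{n+1}$ supported near $t_0$. Arguing exactly as in the proof of Theorem \ref{thm:sharphyp1} part 2, the resulting operator equals $T'S' + R$ with $T'$ as in \eqref{eq:Tstandard2}, symbol of order $m$, and phase $\Phi(x,t,\eta) = x\cdot\eta + \gamma t|\eta| + C'$ for some $\gamma \in \{-1,1\}$ and $C'\in\R$. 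The constant $C'$ is absorbed by modifying $V$, and the sign $\gamma = -1$ is absorbed by sending $t\mapsto -t$ (which replaces $I$ by $-I$ without affecting the maximal function bound). Theorem \ref{thm:sharpFIO2} then yields $s \geq s(p) + \frac{1}{p} - \frac{n-1}{2} - \Real(\alpha)$.

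The main technical point in both parts is the microlocalization step: one must verify that pre-/post-composition with the pseudodifferential cutoffs genuinely isolates a single branch of the canonical relation, so that the resulting phase matches the form required by Theorems \ref{thm:sharpFIO1} and \ref{thm:sharpFIO2}. This rests on exactly the same canonical-relation comparison as in the proof of Theorem \ref{thm:sharphyp1}, exploiting the fact that for $t_0 > 0$ the two candidate sources $x_0 \pm t_0\hat{\eta_0}$ are distinct, so that localization of $S_2$ near one of them kills the contribution of the other branch.
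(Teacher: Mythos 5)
Your overall route is the same as the paper's: decompose $m_{\alpha}$ via Bessel asymptotics into a compactly supported piece plus elliptic half-wave pieces $e^{\pm it|D|}a_{\alpha}^{\pm}(tD)$ of order $-\tfrac{n-1}{2}-\Real(\alpha)$, microlocalize to isolate one branch, and invoke Theorems \ref{thm:sharpFIO1} and \ref{thm:sharpFIO2}. Your Part 1 is correct and essentially identical to the paper's argument (you isolate the $\Ca^{-}$ branch rather than $\Ca^{+}$, which is immaterial): there the composition $S_{1}T^{+}_{t_{0}}S_{2}$ is smoothing precisely because you microlocalize on \emph{both} sides, in a conic neighborhood of $(x_{0},\eta_{0})$ on the target side and of $(y_{0},\eta_{0})$ on the source side.

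In Part 2, however, the isolation step as you state it fails. With $S_{2}$ microlocalizing near $(y_{0},\eta_{0})$ and the target-side cutoff supported only near $t_{0}$ in the $t$ variable, the unwanted branch is \emph{not} killed: restricted to sources near $(y_{0},\eta_{0})$, the $\Ca^{+}$ branch still has a nonempty canonical relation, only with targets located near $x_{0}-2t_{0}\hat{\eta}_{0}$ instead of near $x_{0}$. Your justification (``localization of $S_{2}$ near one of the two candidate sources kills the other branch'') is the correct mechanism only when the target is also pinned down microlocally, as in your Part 1; a cutoff in $t$ alone does not pin the target in $x$, so the composed operator is a sum of two FIO pieces with phases $x\cdot\eta+t|\eta|$ and $x\cdot\eta-t|\eta|$, and the reduction to a single operator of the form \eqref{eq:Tstandard2} with one phase, needed for Theorem \ref{thm:sharpFIO2}, does not go through as claimed (nor can one rule out cancellation between the two pieces without further argument). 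The fix is exactly what the paper does at this point: take the target-side operator to be multiplication by a cutoff supported in a small neighborhood of $(x_{0},t_{0})$ in $\R^{n+1}$, i.e.\ localize in $x$ as well as $t$; then the wrong branch, whose targets lie at distance about $2t_{0}$ from $x_{0}$, becomes smoothing, and the rest of your argument (reduction via the appendix results of \cite{LiRoSoYa24}, identification of the phase as $x\cdot\eta+\gamma t|\eta|$ up to harmless modifications, handling $\gamma=-1$ by $t\mapsto -t$, and applying Theorem \ref{thm:sharpFIO2}) proceeds as in the paper.
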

\begin{proof}
The scheme of the proof is similar to that of Theorem \ref{thm:sharphyp1}, although some care is required to show that the principal symbol of each $\Ma_{t}^{\alpha}$ is nonzero somewhere.

Recall from \eqref{eq:malpha} that 
\[
m_{\alpha}(\xi)=a_{\alpha}^{0}(\xi)+e^{i|\xi|}a_{\alpha}^{+}(\xi)+e^{-i|\xi|}a^{-}_{\alpha}(\xi)
\]
for all $\xi\in\Rn$, where $a_{\alpha}^{0}\in C^{\infty}_{c}(\Rn)$ and $a_{\alpha}^{+},a_{\alpha}^{-}\in S^{-\frac{n-1}{2}-\Real(\alpha)}(\Rn)$. In fact, it follows from asymptotics of Bessel functions (see \cite[Section 7.21]{Watson95}) that $a^{+}_{\alpha}$ and $a^{-}_{\alpha}$ are elliptic. For $t>0$ and $\xi\in\R$, set 
\[
T_{t}^{+}(\xi):=e^{it|\xi|}a_{\alpha}^{+}(t\xi)\ \text{ and }\ T_{t}^{-}(\xi):=e^{-it|\xi|}a_{\alpha}^{-}(t\xi)
\]
Note that
\[
\Ca^{+}_{t}:=\{(x,\eta,x+ t\hat{\eta},\eta)\mid x\in\Rn,\eta\in\Rn\setminus\{0\}\}
\]
is the canonical relation of $T_{t}^{+}(D)$, and 
\[
\Ca^{-}_{t}:=\{(x,\eta,x-t\hat{\eta},\eta)\mid x\in\Rn,\eta\in\Rn\setminus\{0\}\}
\]
is the canonical relation of $T^{-}_{t}(D)$. 

Now, let $t_{0}>0$ be as in \eqref{eq:sharpcomplex1}. Fix $x_{0}\in\Rn$, and let $S$ be a pseudodifferential operator of order zero, with a principal symbol which is nonzero at $(x_{0}+t_{0}e_{1},e_{1})$, and with microsupport contained in a small neighborhoods of this point. Here $e_{1}\in\Rn$ is the first basis vector. Then, by construction and because $\Ca_{t}^{+}$ can be parametrized using its last two coordinates, the principal symbol of $T_{t_{0}}^{+}(D)S$ is nonzero at $(x_{0},e_{1},x_{0}+t_{0} e_{1},e_{1})$, and the canonical relation of $T_{t_{0}}^{+}(D)S$ is contained in a small conic neighborhood in $\Ca^{+}_{t}$ of this point. Moreover, the canonical relation of $T_{t_{0}}^{-}(D)S$ is empty, so $T_{t_{0}}^{-}(D)S$ is a smoothing operator.

By \eqref{eq:sharpcomplex1} and because $\Hps$ is invariant under pseudodifferential operators of order zero, one has $\|\Ma_{t_{0}}^{\alpha}Sf\|_{L^{p}(\Rn)}\lesssim \|f\|_{\Hps}$ for all $f\in\Sw(\Rn)$. In turn, since $a_{\alpha}^{0}(t_{0}D)$ and $T_{t_{0}}^{-}(D)S$ are smoothing operators, this implies that
\[
\|T_{t_{0}}^{+}(D)Sf\|_{L^{p}(\Rn)}\lesssim \|f\|_{\Hps}
\]
for $f\in\Sw(\Rn)$. Moreover, one can again write $T_{t_{0}}^{+}(D)S$ as the sum of an operator as in Theorem \ref{thm:sharpFIO1} precomposed with a change of coordinates, and a harmless smoothing operator. So Theorem \ref{thm:sharpFIO1} shows that $s\geq s(p)-\frac{n-1}{2}-\Real(\alpha)$ if $p\geq 2$.

Next, let $I\subseteq(0,\infty)$ be a non-empty open interval such that \eqref{eq:sharpcomplex2} holds, and fix $t_{0}\in I$ and $x_{0}\in\Rn$. Let $S$ be as before, and let $\chi\in C^{\infty}_{c}(\R^{n+1})$ be a smooth cutoff which localizes to a small neighborhood of $(x_{0},t_{0})$. Then \eqref{eq:sharpcomplex2} implies that
\[
\big\|\sup_{t\in \R}|\chi(\cdot,t)\Ma_{t}^{\alpha}Sf(\cdot)|\big\|_{L^{p}(\Rn)}\lesssim \|f\|_{\Hps}
\]
for all $f\in\Sw(\Rn)$.  Set $T^{+}f(x,t):=T_{t}^{+}(D)f(x)$ and $T^{-}f(x,t):=T^{-}_{t}(D)f(x)$ for $x\in\Rn$ and $t>0$. Then $T^{-}S$ is a smoothing operator, as is the term corresponding to $a_{\alpha}^{0}$. It thus follows that 
\[
\big\|\sup_{t\in \R}|{\chi(\cdot,t)T^{+}Sf(\cdot,t)}|\big\|_{L^{p}(\Rn)}\lesssim \|f\|_{\Hps}
\]
for all $f\in\Sw(\Rn)$. Since the the canonical relation of $\chi T^{+}S$ is contained in 
\[
\{(x,t,\eta,-|\eta|,x-t\hat{\eta},\eta)\mid (x,t)\in\R^{n+1},\eta\in\Rn\setminus\{0\}\}
\]
and its principal symbol does not vanish at $(x_{0},t_{0},\eta,-|\eta|,x_{0}-t_{0}e_{1},\eta)$, a similar argument as in the second half of the proof of Theorem \ref{thm:sharphyp1}, relying on Theorem \ref{thm:sharpFIO2}, shows that $s\geq s(p)+\frac{1}{p}-\frac{n-1}{2}-\Real(\alpha)$ if $p\leq 2$, as required.
\end{proof}

\subsection{Sharpness on manifolds}\label{subsec:sharpman}

Finally, we show that the results in Section \ref{subsec:manifold} are essentially sharp for suitable values of $p\in[1,\infty]$. Fix an $n$-dimensional compact Riemannian manifold $(M,g)$ without boundary, and let $\Delta_{g}$ be the associated Laplace--Beltrami operator. %We use notation as in Section \ref{subsec:spacesman}. 

We first observe that the maximal function estimate in Proposition \ref{prop:maximalFIO3} (see also Remark \ref{rem:pinfty}) is essentially sharp for all $p\in[1,2]\cup[\frac{2(n+1)}{n-1},\infty)$.

\begin{proposition}\label{prop:sharpman1}
Let $p\in[1,\infty]$ and $s\in\R$. If there exist $t_{0}\in\R$ and $C_{t_{0}}\geq0$ such that
\[
%\begin{equation}\label{eq:sharpman11}
\|e^{it_{0}\sqrt{-\Delta_{g}}}f\|_{L^{p}(M)}\leq C_{t_{0}}\|f\|_{\HpsM}
%\end{equation}
\]
for all $f\in \Da(M,\Omega_{1/2})$, then $s\geq \frac{n-1}{2}(\frac{1}{2}-\frac{1}{p})$. Moreover, if there exist a non-empty open interval $I\subseteq\R$ and a $C\geq0$ such that
\begin{equation}\label{eq:sharpman12}
\big\|\sup_{t\in I}|e^{it\sqrt{-\Delta_{g}}}f|\big\|_{L^{p}(M)}\leq C\|f\|_{\HpsM}
\end{equation}
for all $f\in\Da(M,\Omega_{1/2})$, then $s\geq \frac{n-1}{2}(\frac{1}{p}-\frac{1}{2})+\frac{1}{p}$.
\end{proposition}
\begin{proof}
For the first statement one can reduce to Theorem \ref{thm:sharpFIO1} by working in local coordinates. Alternatively, one can reason as follows. By replacing $f$ by $e^{-it_{0}\sqrt{-\Delta_{g}}}f$ and using the invariance of $\HpsM$ under $e^{-it_{0}\sqrt{-\Delta_{g}}}$, we may suppose that $t_{0}=0$. But then the first statement follows from the fact that the Sobolev embeddings in \eqref{eq:SobolevM} are sharp (which in turn is a consequence of e.g.~Theorem \ref{thm:sharpFIO1}).

For the second statement we do work explicitly in local coordinates. More precisely, let $K$ be a collection of coordinate charts as in Definition \ref{def:HpFIOM}. Fix a $\ka\in K$, $\ka:U_{\ka}\to\ka(U_{\ka})\subseteq\Rn$, and the associated $\psi_{\ka}$ from \eqref{eq:partitionunit}. We may suppose that $\psi_{\ka}\equiv 1$ on an open subset $U\subseteq U_{\ka}$. Now let $0\neq \psi\in C^{\infty}(M)$ be such that $\supp(\psi)\subseteq U$. Set $\psit:=\psi\circ \ka^{-1}$ and 
\[
T_{t}f(x):=\ka_{*}\big(\psi_{\ka} e^{it\sqrt{-\Delta_{g}}}\ka^{*}(\psit f)\big)(x),
\]
for $f\in\Sw(\Rn)$, $t\in\R$ and $x\in\Rn$. Let $\chi\in C^{\infty}_{c}(\R)$ satisfy $\supp(\chi)\subseteq I$ while not being identically zero, and set $Tf(x,t):=\chi(t)T_{t}f(x)$. Then, by \eqref{eq:sharpman12} and Definition \ref{def:HpFIOM}, we have
\begin{equation}\label{eq:maxlocal}
\big\|\sup_{t\in \R}|Tf(\cdot,t)|\big\|_{L^{p}(\Rn)}\lesssim \|\psit f\|_{\Hps}\lesssim \|f\|_{\Hps}
\end{equation}
for all $f\in\Sw(\Rn)$. 

Next, one can argue as in the proof of Theorem \ref{thm:maximalhyp1} to express $T$ microlocally in terms of an operator as in \eqref{eq:Tstandard2}, with $m=0$, after which one can combine \eqref{eq:maxlocal} with Theorems \ref{thm:sharpFIO1} and \ref{thm:sharpFIO2}. This is possible because $T$ is a Fourier integral operator of order $-1/4$, with a principal symbol which does not vanish identically, and with a canonical relation of the form
\begin{equation}\label{eq:canman}
\Ca:=\{(x,t,\xi,p(x,\xi),y,\eta)\mid (x,\xi)=\chi_{t}(y,\eta)\}.
\end{equation}
Here $\chi_{t}$ is the bicharacteristic flow at time $t$ associated with $\sqrt{g}$ in local coordinates, and $p(x,\xi)>0$ is the principal symbol of $\sqrt{-\Delta_{g}}$ in local coordinates. These statements about $T$ in turn follow by expressing the canonical relation of $(e^{it\sqrt{-\Delta_{g}}})_{t\in\R}$ (see e.g.~\cite[Theorem 29.1.1]{Hormander09}) in local coordinates. In particular, as in the proof of Theorem \ref{thm:sharphyp1}, the fact that the symbol $p$ is nonzero implies that the resulting operator satisfies the conditions of Theorem \ref{thm:sharpFIO2}.
\end{proof}

Finally, the maximal function estimate in Proposition \ref{prop:geodesicmax} is also essentially sharp for all $p\in[1,2]\cup[\frac{2(n+1)}{n-1},\infty)$ (see Remark \ref{rem:pinfty} for $p=\infty$). We use notation %for the averaging operators 
as in \eqref{eq:FIOgeodesic}.

\begin{proposition}\label{prop:sharpman2}
Let $p\in[1,\infty]$ and $s\in\R$. If there exist $t_{0}\in(0,\inj(M))$ and  $C_{t_{0}}\geq0$ such that
%\begin{equation}\label{eq:sharpman2}
\[
\|\A_{t_{0},g}f\|_{L^{p}(M)}\leq C_{t_{0}}\|f\|_{\HpsM}
%\end{equation}
\]
for all $f\in \Da(M,\Omega_{1/2})$, then $s\geq \frac{n-1}{2}(\frac{1}{2}-\frac{1}{p})-\frac{n-1}{2}$. Moreover, if  there exist a non-empty open interval $I\subseteq(0,\inj (M))$ and a $C\geq0$ such that
%\begin{equation}\label{eq:sharpman12}
\[
\big\|\sup_{t\in I}|\A_{t,g}f|\big\|_{L^{p}(M)}\leq C\|f\|_{\HpsM}
\]
%\end{equation}
for all $f\in\Da(M,\Omega_{1/2})$, then $s\geq \frac{n-1}{2}(\frac{1}{p}-\frac{1}{2})+\frac{1}{p}-\frac{n-1}{2}$.
\end{proposition}
\begin{proof}
Here one can argue in two ways. 

Firstly, by working in local coordinates as in the proof of Proposition \ref{prop:sharpman1}, one can reduce to an application of Theorems \ref{thm:sharpFIO1} and \ref{thm:sharpFIO2}. This is possible for the first statement because $\A_{t_{0},g}$ is a Fourier integral operator of order $-(n-1)/2$, associated with a local canonical graph and with a non-vanishing principal symbol. For the second statement one can also use that the canonical relation of the operator $T$, given by $Tf(x,t):=\chi(t)\A_{t,g}f(x)$ for $f\in \Da(M,\Omega_{1/2})$, $x\in M$, $t\in\R$ and $0\neq \chi\in C^{\infty}_{c}(\R)$ such that $\supp(\chi)\subseteq I$, is contained in the union of the canonical relation in \eqref{eq:canman} and the corresponding one with $p(x,\xi)$ replaced by $-p(x,\xi)$.

Alternatively, as in the proof of Proposition \ref{prop:geodesicmax}, one can observe that, in geodesic normal coordinate charts, each $\A_{t,g}$ can be expressed as in \eqref{eq:hypvarop} for some $\rho\in C^{\infty}_{c}(\R^{n+1}\times \Rn)$ which is not identically zero, and a defining function $\Psi\in C^{\infty}(\R^{n+1}\times \Rn)$ satisfying the conditions \eqref{it:rotational} and \eqref{it:cinematic} introduced there. Then Theorem \ref{thm:sharphyp2} (see also Remark \ref{rem:partialdef}) concludes the proof.
\end{proof}

\section*{Acknowledgments}

The authors would like to thank the referee for carefully reading the manuscript and for several very helpful suggestions.

\bibliographystyle{plain}
\bibliography{Bibliography}

\end{document}